\documentclass[11pt,thmsa]{article}%
\usepackage{amsmath}
\usepackage{amsfonts}
\usepackage{amssymb}
\usepackage{graphicx}%
\topmargin -0.8cm
\textwidth 14.5cm
\textheight 22cm
\newtheorem{theorem}{Theorem}[section]

\newtheorem{problem}[theorem]{Problem}

\newtheorem{condition}[theorem]{Condition}

\newtheorem{corollary}[theorem]{Corollary}

\newtheorem{lemma}[theorem]{Lemma}

\newenvironment{proof}[1][Proof]{\noindent\textbf{#1.} }{\ \rule{0.5em}{0.5em}}

\date{}
\begin{document}
\title{Homogeneous Finsler spaces and the flag-wise positively curved condition\thanks{Supported by NSFC (no. 11271198, 51535008)}}

\author{Ming Xu$^1$, Shaoqiang Deng$^2$\thanks{Corresponding author} \\
\\
$^1$College of Mathematics\\
Tianjin Normal University\\
Tianjin 300387, P. R. China\\
Email: mgmgmgxu@163.com.\\
\\
$^2$School of Mathematical Sciences and LPMC\\
Nankai University\\
Tianjin 300071, P. R. China\\
E-mail: dengsq@nankai.edu.cn}

\maketitle

\begin{abstract}
In this paper, we introduce  the flag-wise positively curved condition for Finsler spaces (the (FP) Condition), which means that in each tangent plane, we can find a flag pole in this plane such that the corresponding  flag has   positive flag curvature. Applying the Killing navigation technique, we find a list of compact coset
spaces admitting non-negatively curved homogeneous Finsler metrics
satisfying the (FP) Condition. Using a crucial technique we  developed previously, we prove that most of these coset spaces cannot be endowed with positively curved  homogeneous Finsler metrics. We also prove that any
Lie group whose Lie algebra is a rank $2$ non-Abelian compact Lie algebra
admits a left invariant Finsler metric satisfying the (FP) Condition.
As  by-products, we find
the first example of non-compact coset space $S^3\times \mathbb{R}$ which admits homogeneous flag-wise positively curved Finsler metrics.
Moreover, we find some non-negatively curved Finsler metrics
on $S^2\times S^3$ and $S^6\times S^7$ which satisfy the (FP) Condition, as well as some flag-wise positively curved Finsler metrics on $S^3\times S^3$, shedding some light on the long standing general Hopf conjecture.

\textbf{Mathematics Subject Classification (2000)}: 22E46, 53C30.

\textbf{Key words}: Finsler metric; homogeneous Finsler space; flag curvature; flag-wise positively curved condition.
\end{abstract}

\section{Introduction}

Many curvature concepts in Riemannian geometry can be naturally generalized  to Finsler geometry. For
example, flag curvature is the generalization of sectional curvature, which
measures how the space curves along a tangent plane. The study of compact Finsler spaces of positive flag curvature (we call these spaces positively curved for simplicity) and the classification of
positively curved homogeneous Finsler spaces are important problems in Finsler
geometry. Recently  we have made some  big progress in the study of these problems; see \cite{XD05}, \cite{XD15}, \cite{XDHH06} and \cite{XZ16}.

It is well known that the flag curvature $K^F(x,y,\mathbf{P})$ depends not only on the tangent plane $\mathbf{P}\subset T_xM$, but also on
 the nonzero base vector $y\in \mathbf{P}$ (the flag pole).
 This implies that flag curvature is much more {\it localized} than sectional curvature in Riemannian geometry.
This feather leaves us more options in studying  flag curvature. For example, concerning the positively curved property
mentioned above, we can define the following {\it flag-wise positively curved condition}, or {\it (FP) Condition} for simplicity.

\begin{condition} We say that a Finsler space $(M,F)$  satisfies the (FP) Condition,
if for any $x\in M$ and tangent plane $\mathbf{P}\subset T_xM$, there exists a nonzero
vector $y\in \mathbf{P}$ such that the flag curvature $K^F(x,y,\mathbf{P})>0$.
\end{condition}

The flag-wise positively curved condition is equivalent to the positively curved condition in Riemannian
geometry, but they seem  essentially different in Finsler geometry. Nevertheless, up to now no example has been constructed to give an interpretation of this phenomenon.

The first purpose of this work is to provide examples of
non-negatively curved homogeneous Finsler spaces satisfying the (FP) Condition. They are $S^1$-bundles over Hermitian
symmetric spaces of compact type.

\begin{theorem} \label{main-thm-1}
Let $N_i=G_i/(S^1\cdot H_i)$ be a compact irreducible Hermitian symmetric space for $1\leq i\leq k$,
and $M$ an $S^1$-bundle over $N=N_1\times\cdots\times N_k$
which can be presented as $M=G/(T^{k-1}\cdot H)$, where
$G=G_1\times\cdots\times G_k$, $H=H_1\times\cdots\times H_k$, and $T^{k-1}$ is a $(k-1)$-dimensional sub-torus in $(S^1)^k\subset G$
which does not contain any $S^1$-factor in this product.
Then $M$ admits $G$-homogeneous Randers metrics which are non-negatively curved and
satisfy the (FP) Condition.
\end{theorem}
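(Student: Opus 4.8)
The plan is to realize the required metrics through Killing navigation, starting from a normal homogeneous Riemannian metric. Fix a bi-invariant metric $Q$ on $G=G_1\times\cdots\times G_k$, and let $h$ be the induced normal homogeneous metric on $M=G/H'$ with $H'=T^{k-1}\cdot H$. Then $(M,h)$ has non-negative sectional curvature, and by the standard sectional-curvature formula for normal homogeneous spaces, a tangent $2$-plane $\mathrm{span}(\bar u,\bar v)$ at the base point $o=eH'$, with $u,v$ in the reductive complement $\mathfrak m=\mathfrak g\ominus\mathfrak h'$, is flat if and only if $[u,v]=0$ in $\mathfrak g$.

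Next I would carry out the Lie-theoretic computation on which the whole argument rests. Decompose $\mathfrak g_i=\mathbb R e_i\oplus\mathfrak h_i\oplus\mathfrak m_i$, where $\mathbb R e_i=\mathrm{Lie}(S^1)$ is the center of the isotropy algebra of $N_i$ and $\mathfrak m_i\cong T_{o_i}N_i$; since $N_i$ is irreducible Hermitian symmetric, $[\mathfrak m_i,\mathfrak m_i]\subseteq\mathbb R e_i\oplus\mathfrak h_i$, $[\mathfrak m_i,\mathfrak m_j]=0$ for $i\ne j$, and $\mathrm{ad}(e_i)$ restricts to a nonsingular skew-symmetric operator on $\mathfrak m_i$ (a multiple of the complex structure $J_i$). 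Moreover $\mathfrak m=\mathbb R Z_0\oplus\bigoplus_i\mathfrak m_i$, where $Z_0=\sum_i c_i e_i$ spans $(\bigoplus_i\mathbb R e_i)\ominus\mathfrak t^{k-1}$, i.e. the fibre direction of $M\to N$; and the hypothesis that $T^{k-1}$ contains no $S^1$-factor is equivalent to $c_i\ne 0$ for every $i$. From the bracket relations one reads off two facts: (i) every flat $2$-plane of $h$ at $o$ lies inside $\bigoplus_i\mathfrak m_i$, and is therefore $h$-orthogonal to $Z_0$; and (ii) for every nonzero $w=\sum_i w_i\in\bigoplus_i\mathfrak m_i$ one has $[Z_0,w]=\sum_i c_i\,\mathrm{ad}(e_i)w_i\ne 0$, so in particular $\mathrm{span}(\bar Z_0,\bar w)$ is never flat.

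Let $\hat W$ be the $G$-invariant Killing field of $(M,h)$ generated by the right action of the fibre circle $(S^1)^k/T^{k-1}$, so that $\hat W_o=Z_0$, and set $W=\varepsilon\hat W$ with $\varepsilon>0$ small enough (possible by homogeneity and compactness) that $\|W\|_h<1$ everywhere. Let $F$ be the Randers metric with navigation data $(h,W)$. Since $h$ and $W$ are both $G$-invariant, $F$ is a $G$-homogeneous Randers metric, and by the flag-curvature formula for Zermelo navigation along a Killing field, the condition $\sec^h\ge 0$ forces $K^F\ge 0$ everywhere; thus $F$ is non-negatively curved.

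For the (FP) Condition it suffices, by $G$-homogeneity, to treat $2$-planes $\mathbf P\subset T_oM$. Given $\mathbf P$, pick an $F$-unit flag pole $y\in\mathbf P$ whose direction is not that of $Z_0$ (possible since $\mathbf P$ is $2$-dimensional), and put $u=y-W_o$, the vector corresponding to $y$ under the navigation, which is $h$-unit. By (ii) we have $[Z_0,y]\ne 0$, hence $[u,y]=-\varepsilon[Z_0,y]\ne 0$; thus $v\mapsto[u,v]$ is a nonzero linear map on $\mathbf P$, so there is a $v\in\mathbf P$ linearly independent from $y$ with $[u,v]\ne 0$, i.e. $\mathrm{span}(u,v)$ is not $h$-flat — the point being that subtracting the diagonal fibre direction $W_o$ destroys $h$-flatness, via (i) and (ii). The $h$-sectional curvatures of the planes $\mathrm{span}(u,v)$ with $v\in\mathbf P$ form the principal part of the Zermelo-navigation expression for $K^F(o,y,\mathbf P)$, and this part is now strictly positive. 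The main obstacle is to conclude from here: the navigation formula expresses $K^F(o,y,\mathbf P)$ as this principal part corrected by terms quadratic in $\nabla W$ (note that $\nabla W$ at $o$ does not vanish on any $\mathfrak m_i$, acting there essentially as $\varepsilon c_i J_i$), and on the $h$-flat planes these corrections are of the same order in $\varepsilon$ as the principal part, so one must show they cannot cancel it. Making this precise — which requires the exact form of the flag-curvature formula for Killing navigation together with a careful choice of pole — is where the bulk of the work lies; once it is done, $K^F(o,y,\mathbf P)>0$, establishing the (FP) Condition.
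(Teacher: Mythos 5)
Your overall strategy coincides with the paper's: start from the Riemannian normal homogeneous metric $h$, take the $G$-invariant Killing field generated by the fibre direction $Z_0\in\mathfrak m$ (the paper's $v\in\mathfrak m_0$ in Lemma \ref{proposition-main-tool}), and run Killing navigation to produce an invariant Randers metric. Your Lie-theoretic observations (i) and (ii) are exactly the two hypotheses the paper abstracts in Lemma \ref{proposition-main-tool}, so up to notation and normalization this is the same construction, and the non-negative curvature step is fine.

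The (FP) step, however, has a genuine gap that you yourself flag. You picture the navigated flag curvature as ``the $h$-sectional curvature plus corrections quadratic in $\nabla W$'' and leave open whether those corrections can cancel the principal term; with only a small-$\varepsilon$ perturbation argument this cannot be closed, since on the $h$-flat planes everything is of the same order in $\varepsilon$, as you note. The missing ingredient is the exact flag-curvature identity for Killing navigation (the paper's Theorem \ref{killingnavigationthm}, due to Huang--Mo): for a Killing navigation datum $(F,V)$ and nonzero $y$, if $w$ satisfies $\langle w,y\rangle^F_y=0$, then $K^{\tilde F}(x,\tilde y,\tilde y\wedge w)=K^F(x,y,y\wedge w)$ with no correction terms whatsoever, for any $\varepsilon$ with $F(V)<1$. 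This turns the question into pure Lie algebra: the flag is positively curved iff the relevant bracket is nonzero. A second issue is that once you fix the pole $y\in\mathbf P$, the second vector $w$ in the identity is not free -- it is the (essentially unique) vector of $\mathbf P$ that is $\langle\cdot,\cdot\rangle_{\mathrm{bi}}$-orthogonal to the preimage $y'$ of $y$ under the navigation map, whereas you choose it so that $[u,v]\neq0$, which is not the same constraint. For that constrained $w$ it can happen that $[y',w]=0$. The paper handles this by testing the two poles $\pm y$: if $y'$ and $y''$ are the preimages of $v_1$ and $-v_1$ and both $[y',w]=[y'',w]=0$, subtracting the two bracket relations forces $[Z_0,w]=0$ and $[v_1,w]=0$, contradicting your facts (i)--(ii). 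Without the exact identity and this two-pole argument, the proposal does not reach the conclusion.
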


In particular, the case $k=1$ for Theorem \ref{main-thm-1}
corresponds to all the canonical $S^1$-bundles over irreducible
Hermitian symmetric spaces of compact type. In the following
corollary, we list all these examples except the homogeneous
spheres $S^{2n-1}=\mathrm{SU}(n)/\mathrm{SU}(n-1)$ which obviously admit positive flag curvature.

\begin{corollary} \label{cor}
The compact coset spaces
\begin{eqnarray}\label{mainlist}
& &\mathrm{SU}(p+q)/\mathrm{SU}(p)\mathrm{SU}(q)\mbox{ with }p\geq q>1,\nonumber\\
& &\mathrm{SO}(n)/\mathrm{SO}(n-2)\mbox{ with }n>4,\quad
\mathrm{Sp}(n)/\mathrm{SU}(n)\mbox{ with }n>1,
\nonumber\\
& &\mathrm{SO}(2n)/\mathrm{SU}(n) \mbox{ with }n>2,\quad
\mathrm{E}_6/\mathrm{SO}(10)\quad\mbox{and}\quad
\mathrm{E}_7/\mathrm{E}_6
\end{eqnarray}
admit non-negatively curved homogeneous Randers metrics satisfying the (FP) Condition.
\end{corollary}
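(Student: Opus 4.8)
The plan is to deduce this corollary from Theorem~\ref{main-thm-1} by specializing to $k=1$. In that case the sub-torus $T^{k-1}=T^{0}$ is trivial and $G=G_{1}$, $H=H_{1}$, so the $S^{1}$-bundle of the theorem is simply the canonical circle bundle $M=G/H$ over a single compact irreducible Hermitian symmetric space $N=N_{1}=G/(S^{1}\cdot H)$. Hence it suffices to run through the classical classification of compact irreducible Hermitian symmetric spaces, and for each one to split its isotropy group as $S^{1}\cdot H$ (the circle being the center of the isotropy group, generated by the complex structure of $N$) so as to read off the coset space $M=G/H$; Theorem~\ref{main-thm-1} with $k=1$ then endows each such $M$ with $G$-homogeneous, non-negatively curved Randers metrics satisfying the (FP) Condition.

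Carrying this out family by family: the complex Grassmannians $\mathrm{SU}(p+q)/S(\mathrm{U}(p)\mathrm{U}(q))=\mathrm{SU}(p+q)/(S^{1}\cdot\mathrm{SU}(p)\mathrm{SU}(q))$ give $M=\mathrm{SU}(p+q)/\mathrm{SU}(p)\mathrm{SU}(q)$; the complex quadrics $\mathrm{SO}(m+2)/(\mathrm{SO}(m)\mathrm{SO}(2))=\mathrm{SO}(m+2)/(S^{1}\cdot\mathrm{SO}(m))$ give, after reindexing $n=m+2$, $M=\mathrm{SO}(n)/\mathrm{SO}(n-2)$; the spaces $\mathrm{Sp}(n)/\mathrm{U}(n)=\mathrm{Sp}(n)/(S^{1}\cdot\mathrm{SU}(n))$ and $\mathrm{SO}(2n)/\mathrm{U}(n)=\mathrm{SO}(2n)/(S^{1}\cdot\mathrm{SU}(n))$ give $M=\mathrm{Sp}(n)/\mathrm{SU}(n)$ and $M=\mathrm{SO}(2n)/\mathrm{SU}(n)$; and the two exceptional Hermitian symmetric spaces of types EIII and EVII give the remaining two entries of \eqref{mainlist}. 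It remains to discard the degenerate low-dimensional cases. The Grassmannian with $q=1$ is $\mathbb{C}P^{p}=\mathrm{SU}(p+1)/S(\mathrm{U}(p)\mathrm{U}(1))$, whose associated bundle is $\mathrm{SU}(p+1)/\mathrm{SU}(p)=S^{2p+1}$; these homogeneous spheres carry positively curved homogeneous metrics already and are exactly the examples set aside before the statement, which is why the hypothesis demands $q>1$ (and, after relabelling, $p\geq q$). Similarly $Q^{1}=S^{2}$ and $Q^{2}=S^{2}\times S^{2}$ is reducible, forcing $m\geq 3$, i.e.\ $n>4$ in $\mathrm{SO}(n)/\mathrm{SO}(n-2)$; $\mathrm{Sp}(1)/\mathrm{U}(1)=S^{2}$ forces $n>1$; and $\mathrm{SO}(2)/\mathrm{U}(1)$ is a point while $\mathrm{SO}(4)/\mathrm{U}(2)=S^{2}$, forcing $n>2$ in the last family. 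The surviving spaces are exactly those in \eqref{mainlist}.

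The content here is essentially bookkeeping rather than geometry, and the one point that genuinely needs care is matching the abstract presentation $M=G/(T^{k-1}\cdot H)$ of Theorem~\ref{main-thm-1} to the concrete coset spaces above: for each family one must verify that the stated $H$ really is the semisimple factor left after the central circle is split off from the isotropy group, bearing in mind that these are almost-direct rather than direct products (for instance $S(\mathrm{U}(p)\mathrm{U}(q))$ is covered by $S^{1}\times\mathrm{SU}(p)\times\mathrm{SU}(q)$ with nontrivial finite kernel, and $S^{1}\cap H$ is a finite central subgroup), so that $G/H\to N$ is indeed the canonical $S^{1}$-bundle and the hypotheses of Theorem~\ref{main-thm-1} are met. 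Once this is checked in each case, nothing further is needed: the required metrics are precisely those produced by Theorem~\ref{main-thm-1}.
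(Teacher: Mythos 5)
Your proposal is correct and follows essentially the same route as the paper's own argument: specialize Theorem~\ref{main-thm-1} to $k=1$ and enumerate the compact irreducible Hermitian symmetric pairs to read off the coset spaces in~\eqref{mainlist}. You supply more of the bookkeeping (the matching of $M=G/H$ to each classical family, the almost-direct product structure of the isotropy groups, and the reasons for the low-dimensional exclusions $q>1$, $n>4$, $n>1$, $n>2$) than the paper, which simply lists the pairs and asserts the correspondence, but the substance is identical.
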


The metrics indicated by the theorem are constructed from Riemannian normal homogeneous metrics and invariant Killing vector fields through the navigation process. The
method is originated from \cite{HD12}, where Z. Hu and S. Deng classified
homogeneous Randers spaces with positive flag curvature and vanishing S-curvature.


By the classification work \cite{BB76} of L. B\'{e}rard Bergery,
it is not hard to see that the coset spaces $M$'s in Theorem
\ref{main-thm-1}
do not admit positively curved homogeneous Riemannian metrics  \cite{BB76}.
In fact, they do not admit positively curved reversible
homogeneous Finsler metrics either \cite{XD15}\cite{XZ16}.
The second purpose of this work is to prove that the same statement holds for most of them without the reversibility condition. This is based on the following crucial criterion for the existence of positively
curved homogeneous metrics on an odd dimensional compact coset space.

\begin{theorem}\label{mainthm-1}
Let $(G/H,F)$ be an odd dimensional positively curved homogeneous Finsler space
such that $\mathfrak{g}=\mathrm{Lie}(G)$ is a compact Lie algebra
with a bi-invariant inner product
$\langle\cdot,\cdot\rangle_{\mathrm{bi}}$, and an orthogonal
decomposition $\mathfrak{g}=\mathfrak{h}+\mathfrak{m}$. Let $\mathfrak{t}$
be a fundamental Cartan subalgebra of $\mathfrak{g}$, i.e.,
$\mathfrak{t}\cap\mathfrak{h}$ is a Cartan subalgebra of $\mathfrak{h}$.
Then there do not exist a pair of linearly independent roots $\alpha$ and $\beta$ of $\mathfrak{g}$ satisfying the
following conditions:
\begin{description}
\item{\rm (1)} Neither $\alpha$ nor $\beta$ is a root of $\mathfrak{h}$;
\item{\rm (2)} $\pm\alpha$ are the only roots of $\mathfrak{g}$ contained in
$\mathbb{R}\alpha+\mathfrak{t}\cap\mathfrak{m}$;
\item{\rm (3)} $\beta$ is the only root of $\mathfrak{g}$ contained in
$\beta+\mathbb{R}\alpha+\mathfrak{t}\cap\mathfrak{g}$.
\end{description}
\end{theorem}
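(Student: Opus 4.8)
The plan is to argue by contradiction: assuming a pair of linearly independent roots $\alpha,\beta$ with properties (1)--(3) exists, I will exhibit inside $\mathfrak{m}$ a $2$-plane $\mathbf{P}$ and a flag pole $y\in\mathbf{P}$ for which the flag curvature $K^{F}(o,y,\mathbf{P})\le 0$, contradicting positivity of the flag curvature. Everything is computed at the base point $o=eH$, where $T_{o}(G/H)$ is identified with $\mathfrak{m}$ and $F$ is encoded by the induced $\mathrm{Ad}(H)$-invariant Minkowski norm.

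First I would fix the combinatorial setup. From the fundamental Cartan subalgebra $\mathfrak{t}$ I have $\mathfrak{t}=(\mathfrak{t}\cap\mathfrak{h})\oplus(\mathfrak{t}\cap\mathfrak{m})$, and I pass to the root-space decomposition $\mathfrak{g}=\mathfrak{t}\oplus\bigoplus_{\gamma}\mathfrak{m}_{\gamma}$, where $\mathfrak{m}_{\gamma}$ is the $2$-dimensional real subspace attached to the pair $\pm\gamma$. Since $G/H$ is odd dimensional, $\dim(\mathfrak{t}\cap\mathfrak{m})$ is odd, so there is a nonzero (suitably chosen) $\mathbf{u}\in\mathfrak{t}\cap\mathfrak{m}$, which will be one of the directions spanning $\mathbf{P}$. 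Condition (1) guarantees $\mathfrak{m}_{\alpha}\subseteq\mathfrak{m}$ and $\mathfrak{m}_{\beta}\subseteq\mathfrak{m}$, since a root that is not a root of $\mathfrak{h}$ has its whole root subspace disjoint from $\mathfrak{h}$ and $\mathfrak{g}=\mathfrak{h}\oplus\mathfrak{m}$ is $\mathrm{ad}(\mathfrak{t}\cap\mathfrak{h})$-invariant. I also record the standard bracket relations $[\,\mathfrak{t},\mathfrak{m}_{\gamma}]\subseteq\mathfrak{m}_{\gamma}$, with $\mathrm{ad}(X)$ acting on $\mathfrak{m}_{\gamma}$ as a rotation of angular speed $\gamma(X)$, together with $[\,\mathfrak{m}_{\gamma},\mathfrak{m}_{\gamma}]=\mathbb{R}H_{\gamma}\subseteq\mathfrak{t}$ and $[\,\mathfrak{m}_{\gamma_{1}},\mathfrak{m}_{\gamma_{2}}]\subseteq\mathfrak{m}_{\gamma_{1}+\gamma_{2}}+\mathfrak{m}_{\gamma_{1}-\gamma_{2}}$.

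The point of (2) and (3) is to remove all extraneous root directions. Condition (2) says exactly that no root other than $\pm\alpha$ lies in $\mathbb{R}\alpha+(\mathfrak{t}\cap\mathfrak{m})$, hence every bracket formed from $\mathbf{u}$, the $(\mathfrak{t}\cap\mathfrak{m})$-component of $H_{\alpha}$, and $\mathfrak{m}_{\alpha}$ stays inside $\mathbb{R}H_{\alpha}+\mathfrak{m}_{\alpha}+(\mathfrak{t}\cap\mathfrak{m})$; thus $\mathfrak{m}_{\alpha}+(\mathfrak{t}\cap\mathfrak{m})$ together with the central part of $H_\alpha$ closes up into a small reductive piece with no unexpected root components. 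Read through the same splitting, condition (3) does the analogous job for $\beta$: it forces the brackets involving $\mathfrak{m}_{\beta}$, $\mathbf{u}$ and $\mathfrak{m}_{\alpha}$ to stay inside $\mathfrak{m}_{\beta}+\mathbb{R}H_{\beta}$ and $\mathfrak{m}_{\alpha}+\mathfrak{m}_{\beta}$. Combining the two, one obtains a subspace $\mathfrak{v}\subseteq\mathfrak{m}$ spanned by $\mathbf{u}$ and suitable vectors of $\mathfrak{m}_{\alpha}$ and $\mathfrak{m}_{\beta}$ on which the bracket either vanishes or takes values in a controlled abelian piece of $\mathfrak{t}$; choosing $\mathbf{P}$ to be a $2$-plane inside $\mathfrak{v}$ with $\mathbf{u}$ (or a second $\mathfrak{m}_{\beta}$-vector) as flag pole produces the Lie-algebraic data of a flat, which is forbidden by the crucial technique developed in our earlier work for positively curved homogeneous Finsler spaces.

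The main obstacle is this last step. Unlike the normal homogeneous Riemannian case, where vanishing of the relevant brackets immediately kills the sectional curvature, for a general homogeneous Finsler metric $F$ one must invoke the homogeneous flag curvature formula with its connection and Cartan-tensor correction terms and verify that the $\mathrm{Ad}(H)$-invariant Minkowski norm contributes no positive term along $\mathbf{P}$; this is exactly where the previously developed criterion enters, so the real work is to check that the configuration produced by (1)--(3) fits its hypotheses — in particular that $y$ is an admissible flag pole and that $\mathrm{ad}_{y}$ is suitably degenerate on $\mathbf{P}$ — and, on the combinatorial side, to track the decomposition $\mathfrak{t}=(\mathfrak{t}\cap\mathfrak{h})\oplus(\mathfrak{t}\cap\mathfrak{m})$ carefully enough that conditions (2) and (3) really do eliminate every unwanted root direction.
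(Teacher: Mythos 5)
Your proposal correctly reads the combinatorial role of conditions (1)--(3) and correctly identifies that the proof must pass through Huang's homogeneous flag curvature formula rather than a naive bracket computation. However, there is a genuine gap exactly at the step you flag as "the real work," and it is not a small one — it is the entire new content of the theorem.

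Your plan is to produce a commutative pair inside $\mathfrak{m}$ and then invoke "the previously developed criterion" — that is, the simplified flag curvature formula (Theorem \ref{simple-flag-curvature-formula-thm}, or Lemma 3.10 of \cite{XD15}). That criterion has a hypothesis beyond commutativity: it requires the spray vector to vanish, $\langle u,[u,\mathfrak{m}]\rangle_u^F=0$, i.e. $\eta(u)=0$. For a \emph{reversible} homogeneous Finsler metric this can be arranged; for a general (non-reversible) metric it cannot be verified, and this is precisely what stopped earlier work at the reversibility assumption. If you take your commutative pair to be $u\in\mathfrak{g}_{\pm\alpha}$ and $v\in\mathfrak{g}_{\pm\beta}$ (or $\mathbf{u}\in\mathfrak{t}\cap\mathfrak{m}$ and a vector of $\mathfrak{g}_{\pm\beta}$), you have no control over $\eta(u)$; in the paper's calculation one finds $\eta(u(t))=c_1(t)\bar u$ with $c_1$ generally nonzero, so the simplified formula simply does not apply and your argument stalls.

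What the paper actually does is different in kind: rather than seeking one privileged flag where $\eta$ vanishes, it considers the one-parameter family of flag poles $u(t)=u+tu_0$ with $u\in\mathfrak{g}_{\pm\alpha}$, $u_0\in\mathfrak{t}\cap\mathfrak{m}$, and the fixed second vector $v\in\mathfrak{g}_{\pm\beta}$. Using the orthogonality decompositions of $\mathfrak{m}$ coming from the $\mathrm{Ad}(T_H)$-action (Lemma \ref{orthogonality-lemma}) together with hypotheses (1)--(3), it pins down $\eta(u(t))$, the Cartan-tensor corrections, and the connection operator $N(u(t),v)=c_2(t)\bar v$, and establishes the closed formula $\langle R_{u(t)}^F v,v\rangle_{u(t)}^F=c_2(t)^2\langle v,v\rangle_{u(t)}^F$. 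Positivity then fails because $c_2(t)$, which equals (up to a uniformly bounded positive factor) $c\langle u_0,u\rangle_{u(t)}^F+ct\langle u_0,u_0\rangle_{u(t)}^F$, must change sign as $t\to\pm\infty$, so $c_2(t_0)=0$ for some $t_0$ and $K^F(o,u(t_0),u(t_0)\wedge v)=0$. This continuity/sign-change mechanism with a moving flag pole is the new idea that replaces the unusable $\eta=0$ hypothesis; it is absent from your proposal, and without it the argument you outline does not go through in the non-reversible setting that the theorem is designed to handle.
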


The importance of Theorem \ref{mainthm-1} for studying the classification
of positively curved homogeneous Finsler spaces has been revealed
in \cite{XD15}. Practically, it helps to exclude
many unwanted candidates from the list of positively curved homogeneous Finsler spaces. With the help of additional reversibility assumption, its proof can be very much simplified
(see Lemma 3.10 and its proof in \cite{XD15}).

Theorem \ref{mainthm-1} provides important and surprising insight for the flag curvature of homogeneous Finsler spaces. Before this paper, the only effective application of the homogeneous
flag curvature formula of L. Huang in \cite{Huang2013}
is through its simplified version in \cite{XDHH06} (see Theorem \ref{homogeneous-flag-curvature-thm} and Theorem \ref{simple-flag-curvature-formula-thm} below), which
calculates the flag curvature $K^F(o,u,u\wedge v)$ with linearly independent
$u,v\in \mathfrak{m}=T_{o}(G/H)$ satisfying
\begin{eqnarray}
[u,v]=0\mbox{ and }\langle u,[u,\mathfrak{m}]\rangle_u^F=0.\label{5000}
\end{eqnarray}
Notice that when the homogeneous metric is not reversible, (\ref{5000}) can
not be easily verified.
But in this paper, we find a totally new method which overcomes this obstacle. It sheds much new light on the classification
project for positively curved homogeneous Finsler spaces, in
the most general sense, because most argument in \cite{XD15} and \cite{XZ16} can be carried out without the reversibility assumption.

We apply Theorem \ref{mainthm-1} to the coset space $M$ in Theorem \ref{main-thm-1}. Notice that when $k>1$, for each
fixed $N$, there exist infinitely many $M$ corresponding to it. The following theorem indicates almost all of them can not be
homogeneously positively curved.

\begin{theorem}\label{cor-main-thm-1}
Let $N_i=G_i/(S^1\cdot H_i)$ be a compact irreducible Hermitian symmetric space for $1\leq i\leq k$,
and $M$ an $S^1$-bundle over $N=N_1\times\cdots\times N_k$
which can be presented as $M=G/(T^{k-1}\cdot H)$, where
$G=G_1\times\cdots\times G_k$, $H=H_1\times\cdots\times H_k$ and $T^{k-1}$ is a $(k-1)$-dimensional sub-torus in $(S^1)^k\subset G$
which does not contain any $S^1$-factor in this product. Then
we have the following:
\begin{description}
\item{\rm (1)} When $k=1$, none of the coset spaces
\begin{eqnarray}\label{mainlist-2}
& &\mathrm{SU}(p+q)/\mathrm{SU}(p)\mathrm{SU}(q), \mbox{ with }p>q\geq 2
\mbox{ or }p=q>3,\nonumber\\
& &\mathrm{Sp}(n)/\mathrm{SU}(n),\mbox{ with }n>4,\nonumber\\
& &\mathrm{SO}(2n)/\mathrm{SU}(n), \mbox{ with }n=5\mbox{ or }n>6,
\nonumber\\
& &\mathrm{E}_6/\mathrm{SO}(10)\mbox{ and }\mathrm{E}_7/\mathrm{E}_6
\end{eqnarray}
admits positively curved homogeneous Finsler metrics.
\item{\rm (2)} When $k=2$ or $3$, for each $N$, there may exist only finitely many coset spaces $M$'s corresponding to it, which admit positively curved homogeneous Finsler metrics.
\item{\rm (3)} When $k>3$, none of the coset spaces $M$'s admit positively curved homogeneous Finsler metrics.
\end{description}
\end{theorem}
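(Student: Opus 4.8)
The plan is to derive every assertion from Theorem \ref{mainthm-1}, used contrapositively: if, inside $\mathfrak{g}=\mathfrak{g}_1\oplus\cdots\oplus\mathfrak{g}_k$, one can exhibit a pair of linearly independent roots $\alpha,\beta$ satisfying conditions (1)--(3) of that theorem relative to the decomposition $\mathfrak{g}=\mathfrak{k}+\mathfrak{m}$ with $\mathfrak{k}=\mathrm{Lie}(T^{k-1}\cdot H)$ (which plays the role of $\mathfrak{h}$ there; note that $\dim M$ is odd, since each $N_i$ is even-dimensional), then $M$ carries no positively curved $G$-homogeneous Finsler metric, while the coset spaces for which such a pair cannot be found are exactly the ones left open. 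So everything reduces to root-system bookkeeping, and first I would assemble the relevant data. Since each $N_i=G_i/(S^1\cdot H_i)$ is an irreducible Hermitian symmetric space, $\mathrm{rank}(S^1\cdot H_i)=\mathrm{rank}(G_i)$, so we may take a fundamental Cartan subalgebra $\mathfrak{t}=\mathfrak{t}_1\oplus\cdots\oplus\mathfrak{t}_k$ with $\mathfrak{t}_i=\mathbb{R}Z_i\oplus\mathfrak{t}_{\mathfrak{h}_i}$, where $Z_i$ spans the centre of $\mathrm{Lie}(S^1\cdot H_i)$ and $\mathfrak{t}_{\mathfrak{h}_i}$ is a Cartan subalgebra of $\mathfrak{h}_i$. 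Then $\mathfrak{k}=\mathfrak{t}^{k-1}\oplus\mathfrak{h}_1\oplus\cdots\oplus\mathfrak{h}_k$ and $\mathfrak{t}\cap\mathfrak{m}=\mathbb{R}Z_0$ with $Z_0=z_1Z_1+\cdots+z_kZ_k$ spanning the orthogonal complement of $\mathfrak{t}^{k-1}$ inside $\bigoplus_i\mathbb{R}Z_i$; the hypothesis that $T^{k-1}$ contains no $S^1$-factor is exactly the statement that every $z_i\neq0$. The roots of $\mathfrak{g}$ are the roots of the factors $\mathfrak{g}_i$, the roots of $\mathfrak{k}$ are the roots of the $\mathfrak{h}_i$, and by Hermitian symmetric space theory a root $\alpha$ of $\mathfrak{g}_i$ fails to be a root of $\mathfrak{k}$ --- i.e. satisfies condition (1) --- precisely when it is a noncompact root, equivalently $\alpha(Z_i)\neq0$; moreover all noncompact roots of $\mathfrak{g}_i$ take the same two values $\pm c_i$ on $Z_i$.

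The engine for parts (2) and (3) is a projection argument: each root of $\mathfrak{g}$ lies in one single $\mathfrak{t}_l^*$, whereas the direction $\mathfrak{t}\cap\mathfrak{m}$ has a nonzero component in every $\mathfrak{t}_l^*$ (this is where $z_i\neq0$ is used). Hence, choosing $\alpha$ a noncompact root of $\mathfrak{g}_{i_1}$ and $\beta$ a noncompact root of $\mathfrak{g}_{i_2}$ with $i_1\neq i_2$, any would-be competitor root lies in some $\mathfrak{g}_l$, and as soon as there is an index $l'$ distinct from $i_1,i_2$ and from $l$, matching $\mathfrak{t}_{l'}^*$-components kills the $\mathfrak{t}\cap\mathfrak{m}$-coefficient and forces conditions (2) and (3). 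When $k>3$ such a spare index always exists, which is part (3). When $k=1,2,3$ the argument can break down, but only through $\mathbb{CP}^1$-factors: if $G_l=\mathrm{SU}(2)$ then its single positive root is proportional to $Z_l$, so the projection no longer pins the coefficient down. Running through the cases shows that for $k=2,3$ one can still choose $(\alpha,\beta)$ supported on non-$\mathbb{CP}^1$ factors, and the criterion applies, for every $M$ over every $N$ other than the products of ``too many'' $2$-spheres; and even over such an $N$ only finitely many subtori $T^{k-1}$ --- those realising a finite list of rational ratios among the $z_i$ --- escape the criterion, so for each $N$ only finitely many coset spaces $M$ are left as possible exceptions, which is part (2); the residue is treated separately.

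For part (1), $k=1$, there is no spare factor and $\mathfrak{t}\cap\mathfrak{m}=\mathbb{R}Z_1$, so conditions (2)--(3) become genuine statements inside the irreducible root system of $\mathfrak{g}_1$ with $Z_1$ the centre direction of the isotropy: find a noncompact root $\alpha$ such that $\mathbb{R}\alpha+\mathbb{R}Z_1$ meets the roots only in $\pm\alpha$, and then a noncompact root $\beta$ isolated within $\beta+\mathbb{R}\alpha+\mathbb{R}Z_1$. I would do this type by type: for $\mathrm{SU}(p+q)/\mathrm{SU}(p)\mathrm{SU}(q)$ (type $A_{p+q-1}$, roots $e_a-e_b$, $Z_1\propto q\sum_{a\le p}e_a-p\sum_{a>p}e_a$), for $\mathrm{Sp}(n)/\mathrm{SU}(n)$ (type $C_n$, exploiting the long roots $2e_a$), for $\mathrm{SO}(2n)/\mathrm{SU}(n)$ (type $D_n$, where whether the ``complementary'' root $-\sum e_a$ over the unused coordinates is again a root depends on the parity of $n$ --- this is what produces the exceptional set $n\in\{4,6\}$), and by an explicit root computation for $\mathrm{E}_6/\mathrm{SO}(10)$ and $\mathrm{E}_7/\mathrm{E}_6$. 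The absence of $\mathrm{SO}(n)/\mathrm{SO}(n-2)$ from \eqref{mainlist-2} should come out of the same analysis: there $Z_1$ is a coordinate vector $e_1$, so for every noncompact root $\alpha$ the plane $\mathbb{R}\alpha+\mathbb{R}e_1$ already contains an $A_1\times A_1$ or $B_2$ worth of roots and condition (2) can never hold.

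The hard part will be this last, exhaustive case analysis --- pinning down the exact thresholds ($p=q\in\{2,3\}$, $n\in\{4,6\}$, the small-$n$ exclusions for $\mathrm{Sp}(n)/\mathrm{SU}(n)$) rather than a vague ``all sufficiently large'' statement, and carrying out the $\mathrm{E}_6$, $\mathrm{E}_7$ verifications by hand --- together with the finiteness count in part (2), where one must check that as $T^{k-1}$ ranges over its infinitely many possibilities the criterion is destroyed for only finitely many coset spaces above each fixed $N$.
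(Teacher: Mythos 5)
Your structural plan is the same as the paper's: apply Theorem \ref{mainthm-1} contrapositively, with a type-by-type root choice when $k=1$, a projection/``spare index'' argument when $k>3$, and a case analysis driven by $A_1$ (i.e.\ $\mathbb{CP}^1$) factors when $k\in\{2,3\}$. The explicit root choices you sketch for $A$, $C$, $D$, $E_6$, $E_7$ are the ones the paper uses, and your explanation of why $\mathrm{SO}(n)/\mathrm{SO}(n-2)$ and the $D_n$ cases $n\in\{4,6\}$ are excluded is essentially right (for $D_n$ the obstruction is not parity but whether the complement $\{5,\dots,n\}$ of the support of $\alpha,\beta$ has size $0$ or $2$).

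There is, however, a genuine gap in part (2). Your plan is to ``derive every assertion from Theorem \ref{mainthm-1},'' but for $k=2$ with both $G_i=\mathrm{SU}(2)$ --- i.e.\ $M=\bigl(\mathrm{SU}(2)\times\mathrm{SU}(2)\bigr)/S^1$ --- Theorem \ref{mainthm-1} is structurally vacuous for \emph{every} choice of $T^1$, not merely for finitely many. Indeed $\mathfrak{t}$ is $2$-dimensional and $\mathfrak{t}\cap\mathfrak{m}$ is $1$-dimensional, so for any root $\alpha$ one has $\mathbb{R}\alpha+\mathfrak{t}\cap\mathfrak{m}=\mathfrak{t}$, which already contains $\pm\beta$; condition (2) of Theorem \ref{mainthm-1} can never hold. (Equivalently: $\dim(\mathfrak{t}\cap\mathfrak{h}')=1$, so the projections $\mathrm{pr}_{\mathfrak{h}'}(\alpha),\mathrm{pr}_{\mathfrak{h}'}(\beta)$ can never be linearly independent, which is needed in the theorem's proof.) Your phrase ``the residue is treated separately'' does not grapple with the fact that the residue falls entirely outside the reach of the tool your whole plan is built on. The paper handles this case by dropping back to the more primitive technical Lemma \ref{key-tech-lemma-1} (the engine behind Theorem \ref{mainthm-1}) and re-verifying its hypotheses (3)--(4) directly, via $\mathrm{Ad}(g)$-averaging over a suitable $g\in H'$ that fixes $\mathfrak{m}'$ pointwise and rotates $\mathfrak{m}''=\mathfrak{g}_{\pm\beta}$ by an irrational angle; the constraints $c_1\neq\pm c_2$, $c_1\neq\pm\tfrac12 c_2$, $c_1\neq\pm 2c_2$ then emerge from condition (1)--(3) of that lemma (Lemma \ref{last-lemma}). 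Without some replacement for Theorem \ref{mainthm-1} in this case, your finiteness claim for $k=2$ is not established. (The $k=3$ all-$A_1$ case, by contrast, has $\dim(\mathfrak{t}\cap\mathfrak{h}')=2$ and Theorem \ref{mainthm-1} does apply, with the finite exceptional set $|c_1|=|c_2|=|c_3|$ --- your ``finite list of rational ratios'' is correct there.)
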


To summarize, Theorem \ref{main-thm-1} and Theorem \ref{cor-main-thm-1} together provide abundant examples of compact coset spaces
which admit non-negatively
and flag-wise positively curved homogeneous Finsler metrics, but no positively curved homogeneous Finsler metrics.
This raises the problem to classify the normal homogeneous Finsler spaces which satisfy the (FP) Condition. We will take up this problem in the future.  Here we will only pose some problems
related to the above  examples, and  conjecture that the answer to these problem will be  positive.

Concerning the (FP) Condition alone, without the non-negatively curved condition, we prove

\begin{theorem}\label{main-thm-2}
Let $G$ be a Lie group whose  Lie algebra $\mathfrak{g}$ is
compact and non-Abelian with $\mathrm{rk}\mathfrak{g}=2$. Then $G$ admits
left invariant Finsler metrics satisfying the (FP) Condition.
\end{theorem}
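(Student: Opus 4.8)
The plan is to translate the statement into a question about a single Minkowski norm and then to exhibit a suitably non-reversible deformation of a bi-invariant metric. A left invariant Finsler metric on $G$ is the same datum as a Minkowski norm $F$ on $\mathfrak{g}$, and since left translations are isometries acting transitively on $G$ and sending flags to flags, $F$ satisfies the (FP) Condition if and only if for every $2$-plane $\mathbf{P}\subset\mathfrak{g}=T_eG$ there is a nonzero $u\in\mathbf{P}$ with $K^F(e,u,\mathbf{P})>0$; in particular it suffices to construct such an $F$ for each of the five rank $2$ compact non-Abelian Lie algebras, namely $\mathfrak{su}(2)\oplus\mathbb{R}$, $\mathfrak{su}(2)\oplus\mathfrak{su}(2)\cong\mathfrak{so}(4)$, $\mathfrak{su}(3)$, $\mathfrak{sp}(2)\cong\mathfrak{so}(5)$ and $\mathfrak{g}_2$ (the first two already cover the groups $S^3\times\mathbb{R}$ and $S^3\times S^3$ highlighted above). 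Fix a bi-invariant inner product $\langle\cdot,\cdot\rangle$ on $\mathfrak{g}$ and a fundamental Cartan subalgebra $\mathfrak{t}$. The $2$-planes in $\mathfrak{g}$ are of exactly two kinds: those on which the bracket does not vanish, and the Abelian ones; since $\mathrm{rk}\,\mathfrak{g}=2$, an Abelian $2$-plane is precisely a Cartan subalgebra $\mathfrak{t}'=\mathrm{Ad}(g)\mathfrak{t}$. For the bi-invariant Riemannian metric the sectional curvature $K(u,v)=\tfrac14\|[u,v]\|^2/(\|u\|^2\|v\|^2-\langle u,v\rangle^2)$ is positive on planes of the first kind and identically zero on every Cartan subalgebra, where the metric restricts to a flat, totally geodesic maximal torus. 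Thus the entire difficulty is concentrated on the Cartan-subalgebra planes; this is exactly the mechanism by which the (FP) Condition is strictly weaker than positive flag curvature here, and to produce it we must break bi-invariance.

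I would therefore look for $F$ in the shape of a left invariant, non-reversible deformation of $\langle\cdot,\cdot\rangle$ — a Randers metric $\alpha+\beta$ with $\alpha=\sqrt{\langle\cdot,\cdot\rangle}$ and $\beta$ a small left invariant $1$-form, or a slightly more general $(\alpha,\beta)$-type norm — and I would stress the key point that, because $d\beta\neq0$ as soon as the metric dual of $\beta$ is not central, such an $F$ is not the Killing navigation of any bi-invariant metric, so the flat totally geodesic tori that obstruct (FP) for bi-invariant and navigation metrics no longer occur. On a $2$-plane $\mathbf{P}$ on which the bracket does not vanish, and more precisely for $\mathbf{P}$ bounded away from the Abelian locus, the flag curvature of $F$ is, for small $\beta$, a small perturbation of the positive sectional curvature of $\alpha$, so Huang's homogeneous flag curvature formula (Theorem \ref{homogeneous-flag-curvature-thm}) together with compactness of the Grassmannian yields $K^F(e,u,\mathbf{P})>0$ for an appropriate flagpole. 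For a Cartan subalgebra $\mathfrak{t}'$ — together with the non-Abelian planes in a neighbourhood of it — I would instead use the simplified flag curvature formula of \cite{XDHH06} (Theorem \ref{simple-flag-curvature-formula-thm}): the new ingredient replacing the reversibility hypothesis is to choose the flagpole $u\in\mathfrak{t}'$ so that the two conditions in (\ref{5000}) hold automatically — $[u,v]=0$ for $v\in\mathfrak{t}'$ is free since $\mathfrak{t}'$ is Abelian, while $\langle u,[u,\mathfrak{g}]\rangle^F_u=0$ is arranged by taking $u$ in the appropriate centralizer direction, so that $u$ becomes $g^F_u$-orthogonal to $[u,\mathfrak{g}]$ by an isotypic/vanishing argument — after which Theorem \ref{simple-flag-curvature-formula-thm} expresses $K^F(e,u,\mathfrak{t}')$ through a quantity whose leading part (in the size of $\beta$) is non-negative and, for generic $\beta$, strictly positive on $\mathfrak{t}'$.

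The hard part, as always with the flat directions, will be to make \emph{every} Cartan subalgebra acquire a positive flagpole, not just one distinguished one: for the Cartan subalgebra aligned with the deformation — for instance the one containing the metric dual of $\beta$ — both the verification of (\ref{5000}) and the positivity of the leading term degenerate, since any non-reversible feature built in to lift one Cartan subalgebra tends to be compatible only with that one. For these I would either refine $F$ by superposing a second small deformation whose direction is transverse to the first (possible precisely because $\mathrm{rk}\,\mathfrak{g}\geq 2$, so that no single Cartan subalgebra can absorb all of the deformation), or extract the sign of $K^F$ on $\mathfrak{t}'$ from the residual Cartan-tensor terms in the flag curvature formula, and then verify the resulting algebraic positivity separately for each of the five root systems. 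I expect this — rather than the perturbative part on the non-Abelian planes — to be the technical core of the argument; the rank $2$ hypothesis is exactly what keeps it feasible, since it forces every Abelian $2$-plane to be a Cartan subalgebra (a $2$-parameter family, finite modulo the available symmetry) and reduces all the critical flag curvature computations to the $2$-dimensional space $\mathfrak{t}'$.
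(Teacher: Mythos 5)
Your reduction to a Minkowski norm, the observation that the bi\mbox{-}invariant metric fails only on the (Cartan subalgebra) $2$-planes, and the plan to perturb the bi\mbox{-}invariant metric match the paper's starting point, as does your correct diagnosis that a one\mbox{-}parameter deformation can only cure all but one Cartan subalgebra. But the combination step in your proposal does not work, and it is exactly there that the paper introduces its essential device.

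First, a left\mbox{-}invariant Randers metric $\alpha+\beta$ on $G$ with $\alpha$ bi\mbox{-}invariant \emph{is} a Killing navigation: the metric dual $v$ of $\beta$ is automatically a Killing field of $\alpha$, so this is precisely the paper's $\tilde F_{1,\epsilon}$ with $v_1=v$. Such a metric has one totally geodesic, intrinsically flat torus, namely the maximal torus $T_1$ whose Lie algebra $\mathfrak t_1$ contains $v$ (the navigation flow is right translation by $\exp(tv)$, which preserves $T_1$), and flag curvature on $\mathfrak t_1$ is identically zero for every flagpole. Your alternative of ``extracting the sign of $K^F$ on $\mathfrak t'$ from the residual Cartan\mbox{-}tensor terms'' cannot succeed for that one $\mathfrak t_1$: a totally geodesic flat Minkowski $2$-torus forces $K^F\equiv 0$ on those flags regardless of the Cartan tensor, so no amount of massaging of Theorem~\ref{simple-flag-curvature-formula-thm} helps. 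Second, ``superposing a second small deformation'' --- if this means adding a second one\mbox{-}form $\beta'$, or replacing $\phi$ in an $(\alpha,\beta)$-ansatz --- simply produces another metric of the same type with a different but still single obstructive Cartan subalgebra, the one containing $v_1+v_2$; nothing is gained. The difficulty is that flag curvature is nonlinear in the metric, so a sum of perturbations does not behave like the perturbations taken separately.

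The paper's resolution is not a superposition but a \emph{direction\mbox{-}dependent gluing}. It builds two navigation metrics $\tilde F_{1,\epsilon}$ and $\tilde F_{2,\epsilon}$ from generic $v_1\in\mathfrak t_1$ and $v_2\in\mathfrak t_2$ (each failing (FP) only on its own Cartan subalgebra, which is the content of Lemma~\ref{lemma-007}, proved via Theorem~\ref{killingnavigationthm} rather than via (\ref{5000}) and Theorem~\ref{simple-flag-curvature-formula-thm}), and then defines $F_\epsilon=\mu\tilde F_{1,\epsilon}+(1-\mu)\tilde F_{2,\epsilon}$ with $\mu$ positively $0$-homogeneous, equal to $1$ on a cone $\mathcal C_1$ avoiding $\mathfrak t_1$ and equal to $0$ on a cone $\mathcal C_2$ containing a neighbourhood of a direction in $\mathfrak t_1$. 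Because $K^F(e,y,\mathbf P)$ depends only on the metric in a neighbourhood of the flagpole $y$ in $T_eG$, one has $K^{F_\epsilon}(e,y,\mathbf P)=K^{\tilde F_{1,\epsilon}}(e,y,\mathbf P)$ for $y\in\mathcal C_1$ and $=K^{\tilde F_{2,\epsilon}}(e,y,\mathbf P)$ for $y\in\mathcal C_2$; picking $y$ in the appropriate cone makes every plane, including $\mathfrak t_1$, acquire a positive flagpole. Strong convexity of $F_\epsilon$ is then secured by taking $\epsilon$ small so that $F_\epsilon$ is $C^4$-close to $\alpha$. The resulting metric is neither Randers nor $(\alpha,\beta)$-type, and it is this freedom that your proposal lacks. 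Without the gluing (or some comparable construction that makes the metric agree with different navigation metrics in different cone directions), the argument has a genuine gap at the Cartan subalgebra aligned with the deformation.
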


According to \cite{DH2013}, the group in Theorem \ref{main-thm-2} does not admit any left invariant positively curved Finsler metrics. Note that the Lie groups in this theorem can even be non-compact,
e.g.,  $\mathrm{SU}(2)\times\mathbb{R}=S^3\times\mathbb{R}$, which does not
admit any positively curved homogeneous Finsler metric by the Bonnet-Myers Theorem
in Finsler geometry \cite{BCS00}.

There are some interesting by-products of the main results of this paper.
Applying Theorem \ref{main-thm-1} and Corollary \ref{cor} to $\mathrm{SO}(4)/\mathrm{SO}(2)$
(which is diffeomorphic to $S^2\times S^3$) and
$\mathrm{SO}(8)/\mathrm{SO}(6)$ (which is diffeomorphic to $S^6\times S^7$) respectively,
and applying Theorem \ref{main-thm-2} to $\mathrm{SU}(2)\times\mathrm{SU}(2)$
(which is diffeomorphic to $S^3\times S^3$),
we obtain
\begin{corollary}\label{by-product-corollary}
\begin{enumerate}
\item[(1)] Both $S^2\times S^3$ and $S^6\times S^7$ admit non-negatively curved Finsler metrics satisfying the (FP) Condition.
\item[(2)] $S^3\times S^3$ admits Finsler metrics satisfying the (FP) Condition.
\end{enumerate}
\end{corollary}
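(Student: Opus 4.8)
The plan is to read off all three statements from the main theorems already proved, the only genuine work being to recognize each product of spheres as one of the coset spaces those theorems concern.

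For part (1), I would first recall the classical identification of the Stiefel manifold $\mathrm{SO}(n)/\mathrm{SO}(n-2)=V_2(\mathbb{R}^n)$ of orthonormal $2$-frames in $\mathbb{R}^n$ with the unit tangent bundle of $S^{n-1}$, which is an $S^{n-2}$-bundle over $S^{n-1}$; this bundle is trivial as soon as $S^{n-1}$ is parallelizable, and in particular for $n=4$ and $n=8$ (where $S^{3}$ and $S^{7}$ are parallelizable) one gets $\mathrm{SO}(4)/\mathrm{SO}(2)\cong S^2\times S^3$ and $\mathrm{SO}(8)/\mathrm{SO}(6)\cong S^6\times S^7$. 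The space $\mathrm{SO}(8)/\mathrm{SO}(6)$ is the canonical $S^1$-bundle over the complex quadric $Q_6=\mathrm{SO}(8)/(\mathrm{SO}(2)\cdot\mathrm{SO}(6))$ and therefore already occurs in the list (\ref{mainlist}) of Corollary \ref{cor}; applying that corollary produces $\mathrm{SO}(8)$-homogeneous non-negatively curved Randers metrics on $S^6\times S^7$ satisfying the (FP) Condition, and a Randers metric is in particular a Finsler metric.

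For $\mathrm{SO}(4)/\mathrm{SO}(2)$ the group $\mathrm{SO}(4)$ is not simple, so this space is not literally on the list of Corollary \ref{cor}; instead I would exhibit it in the form required by Theorem \ref{main-thm-1} with $k=2$. Using $\mathrm{Spin}(4)=\mathrm{SU}(2)\times\mathrm{SU}(2)$, the preimage $\mathrm{Spin}(2)$ of the standard $\mathrm{SO}(2)\subset\mathrm{SO}(4)$ is a circle $T^1$ that projects non-trivially onto each $\mathrm{SU}(2)$-factor — this is visible from the self-dual / anti-self-dual splitting of $\mathfrak{so}(4)$, under which the infinitesimal generator of the relevant $\mathrm{SO}(2)$ has nonzero components in both $\mathfrak{su}(2)$-summands — and which contains the kernel $\{(1,1),(-1,-1)\}$ of $\mathrm{Spin}(4)\to\mathrm{SO}(4)$; hence $\mathrm{SO}(4)/\mathrm{SO}(2)=(\mathrm{SU}(2)\times\mathrm{SU}(2))/T^1$. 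Taking $N_1=N_2=\mathbb{CP}^1=\mathrm{SU}(2)/(S^1\cdot H_i)$ with $H_i=\{e\}$, $G=\mathrm{SU}(2)\times\mathrm{SU}(2)$ and $H=\{e\}$, this displays $M=\mathrm{SO}(4)/\mathrm{SO}(2)$ as an $S^1$-bundle over $N=\mathbb{CP}^1\times\mathbb{CP}^1=S^2\times S^2$ of exactly the type in Theorem \ref{main-thm-1}, since $T^1$ contains neither $S^1$-factor of $(S^1)^2\subset G$. Theorem \ref{main-thm-1} then furnishes the required metrics on $M\cong S^2\times S^3$, finishing part (1).

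For part (2), $S^3\times S^3$ is the compact Lie group $\mathrm{SU}(2)\times\mathrm{SU}(2)$, whose Lie algebra $\mathfrak{su}(2)\oplus\mathfrak{su}(2)$ is compact, non-Abelian and of rank $2$, so Theorem \ref{main-thm-2} applies directly and supplies left invariant Finsler metrics on $S^3\times S^3$ satisfying the (FP) Condition. The only non-routine point in the whole argument is the bookkeeping in part (1): matching the homogeneous presentations $\mathrm{SO}(4)/\mathrm{SO}(2)$ and $\mathrm{SO}(8)/\mathrm{SO}(6)$ simultaneously with a product of spheres and with the coset-space framework of Theorem \ref{main-thm-1} (with $k=2$ in the $\mathrm{SO}(4)$ case); for $S^6\times S^7$ and $S^3\times S^3$ this is immediate, while for $S^2\times S^3$ one must additionally verify that the circle $T^1$ arising from the $\mathrm{Spin}(4)$-picture is the correct one, which the self-dual / anti-self-dual computation above takes care of.
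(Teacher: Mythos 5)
Your proposal is correct and follows exactly the route the paper indicates in the sentence preceding the corollary: Corollary \ref{cor} on $\mathrm{SO}(8)/\mathrm{SO}(6)\cong S^6\times S^7$, Theorem \ref{main-thm-1} (with $k=2$, $G=\mathrm{SU}(2)\times\mathrm{SU}(2)$, $H=\{e\}$) on $\mathrm{SO}(4)/\mathrm{SO}(2)\cong S^2\times S^3$, and Theorem \ref{main-thm-2} on $\mathrm{SU}(2)\times\mathrm{SU}(2)\cong S^3\times S^3$. You simply fill in the diffeomorphisms via triviality of the unit tangent bundles of $S^3$ and $S^7$ and the self-dual/anti-self-dual check that the relevant $\mathrm{Spin}(2)$ contains no $\mathrm{SU}(2)$-factor, details the paper leaves implicit.
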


Note that the existence of a positively curved Riemanian metric on a product manifold, which is a part of the general Hopf conjecture,
is a very hard long standing open problem. The interest of the above corollary lies in the fact
that at least there exist  non-negatively curved Finsler metrics satisfying
the (FP) Condition on certain product manifolds, and it sheds some light on such an involved and significant problem.

Finally, we have two remarks on some recent
progress inspired by this paper.

In \cite{X2017}, the first author pointed out that the (FP)
condition alone is relatively weak. Perturb a non-negatively
curved Finsler metric, one has a big chance to get
a flag-wise positively curved metric. Using the Killing navigation process and the gluing technique for proving
Theorem \ref{main-thm-2}, he  gave a positive answer for Problem \ref{question-3}, and provided abundant examples of
non-homogeneous flag-wise positively curved Finsler space.

In \cite{XZ17}, the first author and L. Zhang pointed out that
the metrics we have constructed on these examples are normal homogeneous or $\delta$-homogeneous. For normal homogeneous or $\delta$-homogeneous Finsler spaces satisfying the (FP) Condition, Problem \ref{question-1} and Problem \ref{question-2} were positively justified. For even dimensional
normal homogeneous Finsler spaces satisfying the (FP) Condition, a complete classification were given.

This paper is organized as following. In Section 2, we review some basic concepts in Finsler geometry which will be needed for this work.  We also introduce the Killing navigation process and a key flag curvature formula as well as the homogeneous flag curvature formulas. In Section 3,
we use the Killing navigation method to construct flag-wise positively and non-negatively curved homogeneous Randers metrics, and prove Theorem \ref{main-thm-1}.
In Section 4, we prove Theorem \ref{main-thm-2} and raise more questions concerning the (FP) Condition.
In Section 5, we prove Theorem \ref{mainthm-1}. In Section 6,
we prove Theorem \ref{cor-main-thm-1}.

\medskip
\noindent {\bf Acknowledgements.} We would like to thank W. Ziller and L. Huang
for helpful comments and discussions. This work is dedicated to the
80th birthday of our most respectable friend, teacher of mathematics and life, Professor Joseph A. Wolf.
\section{Preliminaries}
\subsection{Finsler metric and Minkowski norm}

A Finsler metric on a smooth manifold $M$ is a continuous function
$F:TM\rightarrow [0,+\infty)$ satisfying the following conditions:
\begin{description}
\item{\rm (1)} $F$ is a smooth positive function on the
slit tangent bundle $TM\backslash 0$;
\item{\rm (2)} $F(x,\lambda y)=\lambda F(x,y)$ for any $x\in M$, $y\in T_xM$, and $\lambda\geq0$;
\item{\rm (3)} On any
{\it standard local coordinates} $x=(x^i)$ and $y=y^i\partial_{x^i}$ for $TM$,    the Hessian matrix $$(g_{ij}(x,y))=\frac12[F^2(x,y)]_{y^iy^j}$$ is positive definite for any nonzero $y\in T_xM$, i.e., it defines an inner product (sometimes denoted as  $g_y^F$ )
$$\langle u,v\rangle_y^F=\frac12\frac{\partial^2}{\partial s
\partial t}F^2(y+su+tv)|_{s=t=0},\quad u,v\in T_xM,$$
on $T_xM$.
\end{description}
We call $(M,F)$ a Finsler space or a Finsler manifold. The restriction of
the Finsler metric to a tangent space is called a Minkowski norm. Minkowski norm can also be defined on any real vector space by similar conditions as
(1)-(3), see \cite{BCS00} and \cite{CS05}. A Finsler metric (or a Minkowski norm) $F$ is called {\it reversible} if $F(x,y)=F(x,-y)$ for all $x\in M$
and $y\in T_xM$ (or $F(y)=F(-y)$ for all vector $y$, respectively).

Riemannian metrics are the special Finsler metrics such that for any standard
local coordinates the Hessian matrix $(g_{ij}(x,y))$ is independent of $y$.
In this case, $g_{ij}(x,y)dx^idx^j$ is a well defined global section of
$\mathrm{Sym}^2(T^*M)$, which is often referred to as the Riemannian metric.
The most important and simple non-Riemannian Finsler metrics are Randers metrics \cite{Ra41}, which are of the form $F=\alpha+\beta$,  where $\alpha$ is a Riemannian
metric, and $\beta$ is a 1-form. The condition for (the Hessian of) $F$ to be positive definite is that
 the $\alpha$-norm of $\beta$  is smaller than $1$ at each point, see \cite{BCS00}.
 $(\alpha,\beta)$-metrics are generalizations of
Randers metrics. They are of the form $F=\alpha\phi(\beta/\alpha)$, where
$\alpha$ and $\beta$ are the same as in Randers metrics, and $\phi$ is
a smooth function. The condition for $F$ to be positive definite can be found in \cite{CS05}. Recently, there are many research works on $(\alpha,\beta)$-spaces, see for example \cite{BCS07}.

The differences between Riemannian metrics and non-Riemannian Finsler metrics
can be detected by the {\it Cartan tensor} $C^F_y(u,v,w)$, where $y$, $u$, $v$ and $w$ are tangent vectors in  $T_xM$,
and  $y\neq 0$. The Cartan tensor is defined by
$$C^F_y(u,v,w)=\frac14\frac{\partial^3}{\partial r\partial s\partial t}
F^2(x,y+ru+sv+tw)|_{r=s=t=0},$$
or equivalently,
$$C^F_y(u,v,w)=\frac12\frac{d}{dt}\langle u,v\rangle_{y+tw}^F|_{t=0}.$$
A Finsler metric is Riemannian if and only if its Cartan tensor vanishes everywhere.
The Cartan tensor can also be defined for a Minkowski norm which detects whether  the norm is induced by  an inner product.

\subsection{Riemann curvature and flag curvature}

The Riemann curvature for a Finsler space $(M,F)$ can be either defined from the structure equation for Chern connection, or by the Jacobi equation for
a variation of geodesics of constant speeds \cite{CS05}. For any
standard local coordinate system, we can
present it as a linear operator $R^F_y=R^i_k(y)\partial_{x^i}\otimes dx^k:T_xM\rightarrow T_xM$,
where
$$R^i_k(y)=2\partial_{x^k}\mathbf{G}^i
-y^j\partial^2_{x^jy^k}\mathbf{G}^i+2\mathbf{G}^j\partial^2_{y^jy^k}
\mathbf{G}^i
-\partial_{y^j}\mathbf{G}^i\partial_{y^k}\mathbf{G}^j,$$
and
$$\mathbf{G}=y^i\partial_{x^i}-2\mathbf{G}^i\partial_{y^i}$$
is the coefficients of the geodesic spray field $\mathbf{G}=y^i\partial_{x^i}-2\mathbf{G}^i\partial_{y^i}$ on
$TM\backslash 0$.

Flag curvature is a natural generalization of sectional curvature in
Riemannian geometry. Now let us recall the definition in \cite{BCS00}. Given a  nonzero vector $y\in T_xM$ (the flag pole),
and a tangent plane $\mathbf{P}\subset T_xM$ spanned by $y$ and $w$ (the flag), the flag curvature is defined as
\begin{equation}\label{5999}
K^F(x,y,\mathbf{P})=K^F(x,y,y\wedge w)=\frac{\langle R^F_y w,w\rangle^F_y}{\langle w,w\rangle^F_y\langle y,y\rangle^F_y-[{\langle y,w\rangle^F_y}]^2}.
\end{equation}
Notice that the flag curvature  depends only  on $y$ and $\mathbf{P}$ rather than $w$. When $F$ is a Riemannian metric, it is just the sectional curvature and irrelevant to the choice
of the flag pole.

\subsection{Navigation process and the flag curvature formula for Killing navigation}
The navigation process is an important technique in studying Randers spaces and flag curvature \cite{BRS04}. Let $V$ be a vector field on the Finsler space $(M,F)$ with
$F(V(x))<1$, for any $x\in M$. Given  $y\in T_xM$, denote
$\tilde{y}=y+F(x,y)V(x)$. Then $\tilde{F}(x,\tilde{y})=F(x,y)$ defines
a new Finsler metric on $M$. We call it the metric defined by the navigation
datum $(F,V)$. When $V$ is a Killing vector field of $(M,F)$, i.e.,
$L_VF=0$, we call this process a Killing navigation, and $(F,V)$ a Killing navigation datum. The following flag curvature formula for Killing navigation is crucial in this work.

\begin{theorem}\label{killingnavigationthm}
Let $\tilde{F}$ be the metric defined by the Killing navigation datum
$(F,V)$ on the smooth manifold $M$ with $\dim M>1$. Then for any $x\in M$, and any nonzero vectors $w$ and $y$ in $T_xM$ such that $\langle w,y\rangle^F_y=0$, we have $K^F(x,y,y\wedge w)=K^{\tilde{F}}(x,\tilde{y}, \tilde{y}\wedge w)$.
\end{theorem}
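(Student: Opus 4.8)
The plan is to reduce the statement to a comparison between the geodesic sprays of $F$ and $\tilde F$, and then to exploit the well-known fact that the geodesics of a Killing navigation are related by flowing along $V$. Concretely, if $\gamma(t)$ is a unit-speed $F$-geodesic with $\dot\gamma(0)=y$ (normalized so $F(x,y)=1$), and $\varphi_t$ denotes the flow of $V$, then $\tilde\gamma(t):=\varphi_t(\gamma(t))$ is a unit-speed $\tilde F$-geodesic with $\dot{\tilde\gamma}(0)=\tilde y=y+V(x)$; the homogeneity of degree $1$ lets one rescale to an arbitrary nonzero $y$. This is the standard navigation correspondence for Killing fields, and I would either cite it from \cite{BRS04} or sketch it from $L_VF=0$ together with the observation that $V$ is an infinitesimal isometry, hence carries $F$-geodesics to $F$-geodesics and commutes appropriately with the spray. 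The key consequence is a clean relation between the geodesic spray coefficients $\mathbf{G}^i$ and $\tilde{\mathbf G}^i$, and therefore between the Riemann curvature operators $R^F_y$ and $R^{\tilde F}_{\tilde y}$.

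The central step is the identity
\begin{equation}
R^{\tilde F}_{\tilde y}(w) = R^F_y(w) \quad\text{modulo the line }\mathbb{R}\tilde y,
\end{equation}
valid for $w$ in the appropriate flag plane — more precisely, one shows that $R^{\tilde F}_{\tilde y}$ and $R^F_y$ agree after projecting onto $\tilde y^{\perp}$ with respect to $g^{\tilde F}_{\tilde y}$. I would derive this by differentiating the geodesic variation: a Jacobi field $J$ along $\gamma$ for $F$ is sent by $(d\varphi_t)$ to a Jacobi field along $\tilde\gamma$ for $\tilde F$, since $\varphi_t$ is built from the isometry $V$-flow and the reparametrization is affine. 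Comparing the two Jacobi equations yields $\langle R^{\tilde F}_{\tilde y}w,w\rangle^{\tilde F}_{\tilde y} = \langle R^F_y w,w\rangle^F_y$ for $w$ perpendicular to the flag pole in the respective metrics. Finally, the denominators in (\ref{5999}) must be matched: under the navigation, $g^{\tilde F}_{\tilde y}$ restricted to the plane $\tilde y\wedge w$ relates to $g^F_y$ on $y\wedge w$ in a way that, after using $\langle w,y\rangle^F_y=0$ and the definition $\tilde y = y + F(x,y)V(x)$, gives $\langle w,w\rangle^{\tilde F}_{\tilde y} = \langle w,w\rangle^F_y$, $\langle \tilde y,\tilde y\rangle^{\tilde F}_{\tilde y} = \langle y,y\rangle^F_y$, and $\langle \tilde y, w\rangle^{\tilde F}_{\tilde y} = 0$; so the two Gram determinants in (\ref{5999}) coincide, and the flag curvatures are equal.

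I expect the main obstacle to be the bookkeeping in the second step: tracking how the Chern connection (or equivalently the spray and its $y$-derivatives) transforms under the navigation, and verifying that the error terms land in the span of the flag pole and hence drop out of (\ref{5999}). The metric-comparison identities in the last step are essentially the defining computation of navigation and should be routine once one writes $\tilde F(x,\tilde y)=F(x,y)$ and differentiates in the flag-plane directions, using $\langle w,y\rangle^F_y=0$ crucially to kill cross terms — this orthogonality hypothesis is exactly what makes the two fundamental tensors restrict compatibly to the flag plane. An alternative, perhaps cleaner, route would be to invoke the homogeneous-space or general formula relating $R^{\tilde F}$ to $R^F$ plus a correction governed by $\nabla V$ (which vanishes in the relevant components because $V$ is Killing), but I would only fall back on that if the direct Jacobi-field argument proves too unwieldy.
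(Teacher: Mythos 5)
The paper does not give a proof of this theorem at all; it simply cites Huang--Mo \cite{HM07},\cite{HM15} for it (and notes those papers treat more general situations). Your overall strategy — pushing $F$-geodesics forward by the flow $\varphi_t$ of $V$ to get $\tilde F$-geodesics, transporting Jacobi fields along, and comparing — is indeed the strategy that underlies the Killing navigation literature, so the plan is in the right spirit.

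However, several of your specific intermediate identities are false, and the gap is not cosmetic. You assert
\[
\langle w,w\rangle^{\tilde F}_{\tilde y}=\langle w,w\rangle^F_y,\qquad
\langle R^{\tilde F}_{\tilde y}w,w\rangle^{\tilde F}_{\tilde y}=\langle R^F_yw,w\rangle^F_y.
\]
Neither holds in general. Normalize $F(x,y)=1$, so $\tilde y=y+V$. Both $y^{\perp_{g^F_y}}$ and $\tilde y^{\perp_{g^{\tilde F}_{\tilde y}}}$ are the common tangent space to the two indicatrices (the $\tilde F$-indicatrix is the translate of the $F$-indicatrix by $V$), which is why your claim $\langle\tilde y,w\rangle^{\tilde F}_{\tilde y}=0$ is correct. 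But the two fundamental tensors restricted to that common tangent space are proportional, not equal: the second fundamental forms of the two indicatrices coincide (translation invariance), while the conormals $g^F_y(y,\cdot)$ and $g^{\tilde F}_{\tilde y}(\tilde y,\cdot)$ differ by the scalar $1+\langle y,V\rangle^F_y$, so
\[
\langle w,w\rangle^{\tilde F}_{\tilde y}
=\frac{\langle w,w\rangle^F_y}{1+\langle y,V\rangle^F_y}.
\]
A two-line check in the Euclidean-to-Randers case ($g$ Euclidean, $V$ constant, $y=e_1$, $V=v_1e_1+v_2e_2$, $w=e_3$) gives $\langle e_3,e_3\rangle^{\tilde F}_{\tilde y}=1/(1+v_1)\ne1$ whenever $\langle y,V\rangle^F_y=v_1\ne0$, confirming this. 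The hypothesis $\langle w,y\rangle^F_y=0$ does \emph{not} force $\langle y,V\rangle^F_y=0$, so the conformal factor is generically nontrivial. Since your denominator estimate is off by this factor, so is your numerator estimate, and your Jacobi-field comparison as written does not establish the equality of $\langle R^{\tilde F}_{\tilde y}w,w\rangle^{\tilde F}_{\tilde y}$ with $\langle R^F_yw,w\rangle^F_y$ — what one can hope to get is equality up to the same factor $(1+\langle y,V\rangle^F_y)^{-1}$. The theorem is true because these factors cancel in the ratio defining $K$, but a correct proof must track the conformal factor in both numerator and denominator rather than asserting term-by-term equality. (Your parenthetical ``$\nabla V$ vanishes in the relevant components because $V$ is Killing'' also overreaches: a Killing field's covariant derivative is antisymmetric, not zero, and there is no reason for its contribution to drop out componentwise.) Given the delicacy, the honest and appropriate move — which is what the paper does — is to cite \cite{HM07} or \cite{HM15}, where this comparison is carried out carefully.
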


The proof can be found in \cite{HM07} or \cite{HM15}, where  some more general situations are also considered.

\subsection{Homogeneous flag curvature formulas}
Using local coordinates to study the flag curvature in homogenous Finsler
geometry is not very convenient. In \cite{Huang2013}, L. Huang provided a
homogeneous flag curvature formula which is expressed by some
 algebraic data of the homogeneous Finsler space.

To introduce his formula, we first define some notations.

Let $(G/H,F)$ be a homogeneous Finsler space,  where $H$ is the compact isotropy subgroup at $o=eH$,
$\mathrm{Lie}(G)=\mathfrak{g}$ and
$\mathrm{Lie}(H)=\mathfrak{h}$. Then we have
an $\mathrm{Ad}(H)$-invariant decomposition $\mathfrak{g}=\mathfrak{h}+\mathfrak{m}$. Denote the projection to the $\mathfrak{h}$-factor (or $\mathfrak{m}$-factor) in this decomposition as $\mathrm{pr}_\mathfrak{h}$ (or $\mathrm{pr}_\mathfrak{m}$), and define
$[\cdot,\cdot]_\mathfrak{h}=\mathrm{pr}_\mathfrak{h}\circ[\cdot,\cdot]$
and $[\cdot,\cdot]_\mathfrak{m}=\mathrm{pr}_\mathfrak{m}\circ[\cdot,\cdot]$,
respectively. Notice that $\mathfrak{m}$ can be canonically identified with
the tangent space $T_o(G/H)$.

For any $u\in\mathfrak{m}\backslash\{0\}$, the {\it spray vector field}
$\eta(u)$ is defined by
\begin{equation*}
	\langle\eta(u), w\rangle_u^F=\langle u,[w,u]_{\mathfrak{m}}\rangle_u^F, \quad \forall w\in\mathfrak{m},
\end{equation*}
and the {\it connection operator} $N(u,\cdot)$ is a linear operator
on $\mathfrak{m}$ determined by
\begin{equation}
	\begin{aligned}
	2\langle N(u,w_1),w_2\rangle_u^F=\langle [w_2,w_1]_{\mathfrak{m}}, u\rangle_u^F & + \langle [w_2,u]_{\mathfrak{m}}, w_1\rangle_u^F+\langle [w_1,u]_{\mathfrak{m}},w_2\rangle_u^F\nonumber\\
	& - 2C^F_u(w_1,w_2,\eta(u)),\quad \forall w_1,w_2\in\mathfrak{m}.\label{define-connection-operator}
	\end{aligned}	
\end{equation}

L. Huang has proven the following theorem.
\begin{theorem}\label{homogeneous-flag-curvature-thm}
Let $(G/H,F)$ be a homogeneous Finsler space, and
$\mathfrak{g}=\mathfrak{h}+\mathfrak{m}$ an $\mathrm{Ad}(H)$-invariant
decomposition, where $\mathfrak{g}=\mathrm{Lie}(G)$,
$\mathfrak{h}=\mathrm{Lie}(H)$, and $\mathfrak{m}=T_o(G/H)$.
Then for any nonzero vector
$u\in T_o(G/H)$, the Riemann curvature
$R^F_u: T_o(G/H)\rightarrow T_o(G/H)$ satisfies
\begin{equation}\label{6000}
	\langle R^F_u v,v\rangle_u^F
	=\langle [[v,u]_{\mathfrak{h}},v],u\rangle_u^F
	+\langle \tilde{R}(u)w,w\rangle_u^F,\quad \forall w\in\mathfrak{m},
\end{equation}
where the linear operator
$\tilde{R}(u):\mathfrak{m}\rightarrow\mathfrak{m}$
is given by
$$\tilde{R}(u)v=D_{\eta(u)}N(u,v)-N(u,N(u,v))+
N(u,[u,v]_\mathfrak{m})-[u,N(u,v)]_\mathfrak{m},$$
and $D_{\eta(u)}N(u,v)$ is the derivative of $N(\cdot,v)$ at $u\in\mathfrak{m}\backslash\{0\}$ in the direction of $\eta(u)$.
\end{theorem}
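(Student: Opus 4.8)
The plan is to exploit the fact that, since $F$ is $G$-invariant, every $X\in\mathfrak{g}$ generates a Killing vector field $X^{*}$ of $(G/H,F)$, and to invoke the Finsler Jacobi equation: along any constant-speed geodesic $\gamma$ the restriction $X^{*}\circ\gamma$ is a Jacobi field, so
$$D_{\dot\gamma}D_{\dot\gamma}(X^{*}\circ\gamma)+R^{F}_{\dot\gamma}(X^{*}\circ\gamma)=0,$$
where $D_{\dot\gamma}$ is covariant differentiation along $\gamma$ with reference vector $\dot\gamma$ (via the Chern connection). Fix $u\in\mathfrak{m}\setminus\{0\}$ and let $\gamma$ be the geodesic with $\gamma(0)=o$, $\dot\gamma(0)=u$. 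Under the identification $\mathfrak{m}=T_{o}(G/H)$ one has $X^{*}(o)=\mathrm{pr}_{\mathfrak{m}}X$, and these span $\mathfrak{m}$ as $X$ ranges over $\mathfrak{g}$; hence evaluating the Jacobi equation at $t=0$ reduces everything to computing $D_{\dot\gamma}D_{\dot\gamma}(X^{*}\circ\gamma)\big|_{t=0}$.

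First I would record the homogeneous geodesic equation. Writing $\gamma(t)=g(t)\cdot o$ with $g(0)=e$ and $u(t):=\mathrm{pr}_{\mathfrak{m}}\big(g(t)^{-1}\dot g(t)\big)$, one shows $\gamma$ is a geodesic precisely when $\dot u(t)=\eta(u(t))$, with $\eta$ the spray vector field. This follows by expressing the geodesic spray coefficients $\mathbf{G}^{i}$ in a left-invariant coframe and comparing with the defining relation $\langle\eta(u),w\rangle^{F}_{u}=\langle u,[w,u]_{\mathfrak{m}}\rangle^{F}_{u}$; the $y$-dependence of $g_{ij}(x,y)$ supplies the correct normalization. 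This is also what puts $D_{\eta(u)}$ into the formula for $\tilde R(u)$: differentiating algebraic data along $\gamma$ at $t=0$ is differentiation on $\mathfrak{m}$ in the direction $\dot u(0)=\eta(u)$.

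The core step is to identify the Chern covariant derivative of the fundamental fields in algebraic terms. Set $v(t):=\mathrm{pr}_{\mathfrak{m}}\mathrm{Ad}(g(t)^{-1})X$, so $v(0)=\mathrm{pr}_{\mathfrak{m}}X$. Using that the Chern connection is torsion-free and almost metric-compatible (with defect governed by the Cartan tensor $C^{F}$), the Killing equation for $X^{*}$, and the bracket relation $[X^{*},Y^{*}]=-[X,Y]^{*}$, one derives a formula of the shape
$$D_{\dot\gamma}(X^{*}\circ\gamma)(t)\ \longleftrightarrow\ N\big(u(t),v(t)\big)-[u(t),v(t)]_{\mathfrak{m}}$$
after transporting back to $\mathfrak{m}$ by $g(t)$, plus a contribution from the $\mathfrak{h}$-part of the bracket which, after the second differentiation, yields the isotropy term $\langle[[v,u]_{\mathfrak{h}},v],u\rangle^{F}_{u}$ of $(\ref{6000})$. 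Thus $N(u,\cdot)$ is exactly the $\mathfrak{m}$-component of the Chern covariant derivative of the action fields with reference vector $u$, and the $-2C^{F}_{u}(w_{1},w_{2},\eta(u))$ term in the definition of $N$ is precisely the correction coming from non-metricity along the evolving reference vector.

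Finally, differentiate once more along $\gamma$ at $t=0$ and collect terms: differentiating $v(t)$ contributes $[u,v]_{\mathfrak{m}}$ inside $N$, giving $N(u,[u,v]_{\mathfrak{m}})$; the operator $N(u,\cdot)$ acting on the $N$-part of the first derivative gives $-N(u,N(u,v))$; the $\mathfrak{m}$-bracket acting on the $N$-part gives $-[u,N(u,v)]_{\mathfrak{m}}$; and differentiating the operator $N$ itself (with $\dot u(0)=\eta(u)$) gives $D_{\eta(u)}N(u,v)$. Assembling, $-D_{\dot\gamma}D_{\dot\gamma}(X^{*}\circ\gamma)\big|_{t=0}=\tilde R(u)v$ together with the isotropy term, which is $(\ref{6000})$. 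I expect the third step to be the main obstacle: because the Chern connection depends on the reference vector $\dot\gamma(t)$ and that reference vector itself evolves by $\dot u=\eta(u)$, one must carry along the extra reference-derivative terms (the Cartan-tensor/spray corrections) consistently, and the signs and the $\mathrm{pr}_{\mathfrak{h}}$-versus-$\mathrm{pr}_{\mathfrak{m}}$ bookkeeping are where the care is needed — this technical accounting, not any single deep idea, is the crux.
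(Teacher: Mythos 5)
The paper does not prove this theorem; it states it as L.\ Huang's result and cites \cite{Huang2013}. Huang's own derivation proceeds directly: he expresses the geodesic spray, the Chern connection coefficients, and the Riemann curvature tensor $R^i_k(y)$ in an invariant (Lie-algebraic) frame, with $\eta$ and $N$ emerging as the invariant avatars of the spray coefficients and the connection one-form, and then reads off (\ref{6000}) by substituting these into the spray-theoretic definition of $R^F_y$ quoted in Subsection~2.2. Your route via the Finsler Jacobi equation for the fundamental Killing fields $X^*$ is a genuinely different derivation (closer in spirit to the classical Cartan computation of sectional curvature on symmetric spaces), and it is a plausible one: it trades the heavy structure-equation bookkeeping for the identification of $D_{\dot\gamma}D_{\dot\gamma}(X^*\circ\gamma)$ at $t=0$, which is exactly the kind of flag-directional information needed.

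Two points you should not leave implicit, though. First, $X^*(o)=\mathrm{pr}_\mathfrak{m}X=v$ fixes only the $\mathfrak{m}$-part of $X$; the $\mathfrak{h}$-part is free, and it \emph{does} enter $\dot v(0)=-\mathrm{pr}_\mathfrak{m}[u,X]$ and hence the first Chern covariant derivative of $X^*\circ\gamma$. Since (\ref{6000}) depends only on $v$, the $\mathrm{pr}_\mathfrak{h}X$-dependent contributions must cancel in the second derivative; this is forced by the Jacobi equation itself (a Jacobi field has $J''(0)=-R^F_uJ(0)$ independently of $J'(0)$), but in an algebraic computation it is a nontrivial cancellation that needs to be exhibited, not just asserted. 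Second, the sign convention in the homogeneous geodesic equation ($\dot u=\eta(u)$ versus $\dot u=-\eta(u)$) and the precise identification $D_{\dot\gamma}(X^*\circ\gamma)\leftrightarrow N(u(t),v(t))-[u(t),v(t)]_\mathfrak{m}$ are exactly where the Cartan-tensor correction $-2C^F_u(\cdot,\cdot,\eta(u))$ in the definition of $N$ must be tracked carefully; your own last paragraph correctly identifies this reference-vector bookkeeping as the crux, and until it is written out the argument is a credible program rather than a proof. (Also note a typo in the paper's statement of (\ref{6000}): the right-hand side should read $\langle\tilde R(u)v,v\rangle_u^F$, not $\langle\tilde R(u)w,w\rangle_u^F$.)
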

This  provides an explicit presentation for the nominator in (\ref{5999}), which
is the only crucial term for studying the flag curvature. So we call (\ref{6000}) the {\it homogeneous flag curvature formula}.

In particular, when $\eta(u)=0$
(i.e. $\langle u,[\mathfrak{m},u]_\mathfrak{m}\rangle_u^F=0$), and
$\mathrm{span}\{u,v\}$ is a two dimensional commutative subalgebra of $\mathfrak{g}$,
we get a  very simple and useful  homogeneous flag curvature.

\begin{theorem} \label{simple-flag-curvature-formula-thm}
Let $(G/H,F)$ be a connected homogeneous Finsler space, and $\mathfrak{g}=\mathfrak{h}+\mathfrak{m}$ be an $\mathrm{Ad}(H)$-invariant
decomposition for $G/H$. Then for any linearly independent commutative pair
$u$ and $v$ in $\mathfrak{m}$ satisfying
$
\langle[u,\mathfrak{m}],u\rangle^F_u=0
$,
we have
\begin{equation*}
K^F(o,u,u\wedge v)=\frac{\langle U(u,v),U(u,v)\rangle_u^F}
{\langle u,u\rangle_u^F \langle v,v\rangle_u^F-
[\langle u,v\rangle_u^F]^2},
\end{equation*}
where $U:\mathfrak{m}\times \mathfrak{m}\rightarrow\mathfrak{m}$ is defined by
\begin{equation*}
\langle U(u,v),w\rangle_u^F=\frac{1}{2}(\langle[w,u]_\mathfrak{m},v\rangle_u^F
+\langle[w,v]_\mathfrak{m},u\rangle_u^F), \quad \forall w\in\mathfrak{m}.
\end{equation*}
\end{theorem}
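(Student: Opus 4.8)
The plan is to derive Theorem \ref{simple-flag-curvature-formula-thm} straight from the homogeneous flag curvature formula (\ref{6000}) of Theorem \ref{homogeneous-flag-curvature-thm}, using the two hypotheses to annihilate almost every term. The first observation is that the assumption $\langle[u,\mathfrak{m}],u\rangle_u^F=0$ is precisely the statement $\eta(u)=0$, by the defining equation of the spray vector field. Two consequences follow at once: the derivative term $D_{\eta(u)}N(u,v)$ in the expression for $\tilde{R}(u)v$ vanishes, and in the defining identity for $N(u,\cdot)$ the Cartan tensor term $C^F_u(w_1,w_2,\eta(u))$ also vanishes. Hence on the relevant subspace $N(u,w_1)$ is given purely by the three bracket terms, and $\tilde{R}(u)v=-N(u,N(u,v))+N(u,[u,v]_{\mathfrak{m}})-[u,N(u,v)]_{\mathfrak{m}}$.

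Next I would bring in the commutativity $[u,v]=0$. This kills three more things: the term $\langle[[v,u]_{\mathfrak{h}},v],u\rangle_u^F$ in (\ref{6000}) (since $[v,u]_{\mathfrak{h}}=0$); the term $N(u,[u,v]_{\mathfrak{m}})$ in $\tilde{R}(u)v$ (since $[u,v]_{\mathfrak{m}}=0$); and, putting $w_1=v$ in $2\langle N(u,v),w\rangle_u^F=\langle[w,v]_{\mathfrak{m}},u\rangle_u^F+\langle[w,u]_{\mathfrak{m}},v\rangle_u^F+\langle[v,u]_{\mathfrak{m}},w\rangle_u^F$, the last summand, leaving exactly $2\langle U(u,v),w\rangle_u^F$. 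Therefore $N(u,v)=U(u,v)$; write $p:=U(u,v)$. At this stage (\ref{6000}) has collapsed to $\langle R^F_u v,v\rangle_u^F=\langle\tilde{R}(u)v,v\rangle_u^F=-\langle N(u,p),v\rangle_u^F-\langle[u,p]_{\mathfrak{m}},v\rangle_u^F$.

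The last step is to expand $\langle N(u,p),v\rangle_u^F$ from the defining identity with $w_1=p$, $w_2=v$, again dropping the $[v,u]$-term. Using the skew-symmetry $\langle[a,b]_{\mathfrak{m}},c\rangle_u^F=-\langle[b,a]_{\mathfrak{m}},c\rangle_u^F$ and the symmetry of $\langle\cdot,\cdot\rangle_u^F$, everything reduces to $\langle\tilde{R}(u)v,v\rangle_u^F=\tfrac12\langle[p,v]_{\mathfrak{m}},u\rangle_u^F-\tfrac12\langle[u,p]_{\mathfrak{m}},v\rangle_u^F$. On the other hand, substituting $w=p$ into the definition of $U$ gives $\langle p,p\rangle_u^F=\tfrac12\langle[p,u]_{\mathfrak{m}},v\rangle_u^F+\tfrac12\langle[p,v]_{\mathfrak{m}},u\rangle_u^F$, which, after rewriting $\langle[p,u]_{\mathfrak{m}},v\rangle_u^F=-\langle[u,p]_{\mathfrak{m}},v\rangle_u^F$, is the very same expression. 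Hence $\langle R^F_u v,v\rangle_u^F=\langle U(u,v),U(u,v)\rangle_u^F$, and inserting this numerator into (\ref{5999}) gives the stated formula.

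The only genuine work is the bookkeeping in the final step: keeping track of the three bracket terms defining $N$, and applying the skew-symmetry of $[\cdot,\cdot]_{\mathfrak{m}}$ together with the symmetry of the fundamental tensor $g_u^F$ to see that the two seemingly different quadratic expressions for $\langle\tilde{R}(u)v,v\rangle_u^F$ and $\langle U(u,v),U(u,v)\rangle_u^F$ actually agree; everything else is simply noting which terms are killed by $\eta(u)=0$ and by $[u,v]=0$. One small point I would flag is that formula (\ref{6000}) as displayed carries a $w$ where it should carry $v$ on the right-hand side; I read this as a typographical slip and apply the formula with $v$ throughout.
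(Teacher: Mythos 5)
Your derivation is correct and carries out precisely the route the paper suggests---it applies the homogeneous flag-curvature formula of Theorem \ref{homogeneous-flag-curvature-thm} and uses the hypothesis $\langle[u,\mathfrak{m}],u\rangle_u^F=0$ to conclude $\eta(u)=0$, which kills the Cartan-tensor and derivative terms, and uses $[u,v]=0$ to kill the $\mathfrak{h}$-bracket term and to identify $N(u,v)$ with $U(u,v)$; the paper itself does not spell out this computation but only states that either Theorem \ref{homogeneous-flag-curvature-thm} or the Finslerian-submersion method yields the result and refers to \cite{XDHH06}. Your reading of the $w$ appearing in display (\ref{6000}) as a typographical slip for $v$ is the correct one, and the final bookkeeping showing $\langle\tilde{R}(u)v,v\rangle_u^F=\langle U(u,v),U(u,v)\rangle_u^F$ checks out.
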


Theorem \ref{simple-flag-curvature-formula-thm} can be proven either by using
Theorem \ref{homogeneous-flag-curvature-thm} or directly using the Finslerian
submersion technique. See \cite{XDHH06} for the details.
\section{Non-negatively curved homogeneous Finsler spaces satisfying the  (FP) Condition}

In this section, we will construct some examples of compact non-negatively curved homogeneous Finsler spaces satisfying the (FP) Condition. Our main tool is
the following lemma.

\begin{lemma}\label{proposition-main-tool}
Let $G$ be a compact Lie group, $H$ a closed connected subgroup, $\mathrm{Lie}(G)=\mathfrak{g}$ and $\mathrm{Lie}(H)=\mathfrak{h}$.
 Fix a bi-invariant inner product on $\mathfrak{g}=\mathrm{Lie}(G)$, denoted as $\langle\cdot,\cdot\rangle_{\mathrm{bi}}$, and  an orthogonal  decomposition  $\mathfrak{g}=\mathfrak{h}+\mathfrak{m}$.
Assume that there exists a vector $v\in\mathfrak{m}$ satisfying the following conditions:
\begin{description}\label{proposition-3-1}
\item {\rm (1)} There exists a one-dimensional subspace
$\mathfrak{m}_0$ in $\mathfrak{m}$ such that the action of $\mathrm{Ad}(H)$ on $\mathfrak{m}_0$ is  trivial;
\item {\rm (2)} Any 2-dimensional commutative subalgebra $\mathfrak{t}'\subset\mathfrak{m}$ satisfies $\langle\mathfrak{m}_0,\mathfrak{t}'\rangle_{\mathrm{bi}}=0$.
\end{description}
Then $G/H$ admits non-negatively curved homogeneous Randers metrics
satisfying the (FP) Condition
\end{lemma}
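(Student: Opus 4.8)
The plan is to build the desired metric through the Killing navigation process, starting from a Riemannian normal homogeneous metric on $G/H$ and perturbing it by the Killing vector field generated by a suitable element of $\mathfrak{m}_0$. Concretely, fix the normal homogeneous Riemannian metric $\alpha$ on $G/H$ induced by $\langle\cdot,\cdot\rangle_{\mathrm{bi}}$, which is non-negatively curved (its flag/sectional curvature is the standard $\tfrac14\|[\cdot,\cdot]_{\mathfrak{m}}\|^2 + \|[\cdot,\cdot]_{\mathfrak{h}}\|^2$-type expression, hence $\geq 0$). Let $X$ be a generator of the one-dimensional subspace $\mathfrak{m}_0$; by condition (1) the element $X\in\mathfrak{g}$ is $\mathrm{Ad}(H)$-fixed, so the vector field $V$ on $G/H$ obtained from the one-parameter subgroup $\exp(tX)$ acting on $G/H$ is a well-defined $G$-invariant Killing field of $\alpha$. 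After rescaling $X$ so that $\alpha(V(x))<1$ everywhere (this is possible by compactness and homogeneity, since $\alpha(V)$ is constant on $G/H$), the navigation datum $(\alpha,V)$ produces a $G$-homogeneous Randers metric $\tilde F$ on $G/H$.

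The next step is to verify non-negative curvature of $\tilde F$. By Theorem \ref{killingnavigationthm}, for any $x$ and any $\alpha$-orthogonal pair $y,w$ one has $K^{\alpha}(x,y,y\wedge w)=K^{\tilde F}(x,\tilde y,\tilde y\wedge w)$; since every flag of $\tilde F$ at $x$ can be written with a flag pole of the form $\tilde y$ and an $\alpha$-orthogonal $w$ (the navigation map $y\mapsto\tilde y=y+\alpha(y)V$ is a bijection of $T_xM\setminus 0$, and one can choose the transverse edge $w$ to be $\alpha$-orthogonal to $y$), non-negativity of the sectional curvature of $\alpha$ transfers directly to $\tilde F$. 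So $\tilde F$ is non-negatively curved.

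The heart of the argument — and the step I expect to be the main obstacle — is establishing the (FP) Condition, i.e.\ showing that every tangent plane $\mathbf P$ of $\tilde F$ at every point contains a flag pole with \emph{strictly} positive flag curvature. By homogeneity it suffices to work at $o$, where $T_o(G/H)=\mathfrak m$. Again using Theorem \ref{killingnavigationthm}, a flag $\tilde y\wedge w$ of $\tilde F$ has positive curvature precisely when the corresponding $\alpha$-flag $y\wedge w$ does, where $y$ is $\alpha$-orthogonal to $w$; for the normal metric $\alpha$, the flag $y\wedge w$ (with $[y,w]$ decomposed) has zero sectional curvature exactly when $\mathrm{span}\{y,w\}$ spans a $2$-dimensional abelian subalgebra lying in $\mathfrak m$ with $[y,w]_{\mathfrak h}=0$ as well — i.e.\ a flat, and all flats through a given plane are governed by such abelian subalgebras. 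Thus a plane $\mathbf P\subset\mathfrak m$ fails the (FP) Condition for $\tilde F$ only if $\mathbf P$ itself is (or is "navigation-equivalent" to) a $2$-dimensional abelian subalgebra $\mathfrak t'\subset\mathfrak m$. Here is where conditions (1) and (2) combine: the navigation shift is by $V\sim X\in\mathfrak m_0$, and condition (2) says $\langle \mathfrak m_0,\mathfrak t'\rangle_{\mathrm{bi}}=0$ for every abelian $\mathfrak t'\subset\mathfrak m$. I would show that for such a potentially-flat plane, one can always pick the flag pole inside it so that, after the navigation shift by a vector not lying in that plane (guaranteed by orthogonality to $\mathfrak m_0$), the $\tilde F$-flag curvature formula picks up a strictly positive contribution — intuitively, the navigation "tilts" the flag pole out of the flat, killing the degeneracy. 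Carrying this out rigorously will require the homogeneous flag curvature formula (Theorem \ref{homogeneous-flag-curvature-thm}), or more practically its simplified form (Theorem \ref{simple-flag-curvature-formula-thm}) to identify exactly which flags of $\alpha$ are flat, together with a careful case analysis separating: (a) planes $\mathbf P$ that are not abelian subalgebras of $\mathfrak m$ — these already have a positively curved flag pole for $\alpha$ itself; and (b) planes $\mathbf P$ that are $2$-dimensional abelian subalgebras $\mathfrak t'\subset\mathfrak m$ — for these one invokes condition (2), noting $X\notin\mathfrak t'^{\perp}$... rather $X\perp\mathfrak t'$, so the navigated pole $\tilde y$ genuinely leaves $\mathfrak t'$, and one computes $K^{\tilde F}(o,\tilde y,\mathbf P)>0$ directly. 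The technical delicacy is handling the Cartan tensor terms in the Randers setting and confirming that "leaving the flat" indeed forces positivity rather than merely non-degeneracy; this is where the bi-invariance of $\langle\cdot,\cdot\rangle_{\mathrm{bi}}$ and the explicit structure of normal homogeneous curvature must be used in full.
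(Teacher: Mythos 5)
Your overall strategy — take the normal homogeneous Riemannian metric $\alpha$, navigate by the $G$-invariant Killing field $V$ generated from $\mathfrak m_0$, invoke Theorem \ref{killingnavigationthm} for non-negative curvature, and then split the (FP) check at $o$ according to whether $\mathbf P\subset\mathfrak m$ is a $2$-dimensional abelian subalgebra — is exactly the route the paper takes. Your treatment of the abelian case (b) also captures the right idea: condition (2) forces $v\perp\mathbf P$, so the navigated pre-pole $v'_1=v_1-\alpha(v'_1)v$ satisfies $[v'_1,v_2]=-\alpha(v'_1)[v,v_2]\neq 0$, because if $[v,v_2]=0$ then $\mathrm{span}\{v,v_2\}$ would be an abelian $2$-plane in $\mathfrak m$ containing $v$ and hence not $\langle\cdot,\cdot\rangle_{\mathrm{bi}}$-orthogonal to $\mathfrak m_0$, contradicting (2).

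The gap is in your case (a), the non-abelian planes, where you assert that such $\mathbf P$ ``already have a positively curved flag pole for $\alpha$ itself''. Since $\alpha$ is Riemannian, the flag pole is irrelevant for $\alpha$: the content of the claim is just $K^{\alpha}(o,\mathbf P)>0$, which is true, but it does \emph{not} transfer to the existence of a good flag pole for $\tilde F$ in $\mathbf P$. The navigation correspondence replaces the plane: if $\tilde y\in\mathbf P$ is the $\tilde F$-flag pole and $w\in\mathbf P$ is $\alpha$-orthogonal to $y$ (where $\tilde y=y+\alpha(y)V$), then Theorem \ref{killingnavigationthm} equates $K^{\tilde F}(o,\tilde y,\tilde y\wedge w)$ with $K^{\alpha}(o,y\wedge w)$, and $\mathrm{span}\{y,w\}$ is in general a \emph{different} plane from $\mathbf P$ because $y=\tilde y-\alpha(y)v$ lies outside $\mathbf P$ whenever $v\notin\mathbf P$. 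So positivity of $K^\alpha$ on $\mathbf P$ gives you nothing directly. The paper closes this with a small but essential trick you omit: pick $v_2\in\mathbf P$ with $\langle v,v_2\rangle_{\mathrm{bi}}=0$, let $v_1$ complete an orthogonal basis, and consider \emph{both} candidate flag poles $v_1$ and $-v_1$, with navigation pre-images $v'_1,v''_1$. Then $[v'_1,v_2]=[v_1,v_2]-\alpha(v'_1)[v,v_2]$ and $[v''_1,v_2]=-[v_1,v_2]-\alpha(v''_1)[v,v_2]$; if both vanished one would get $[v,v_2]=0$ and hence $[v_1,v_2]=0$, contradicting non-commutativity of $\mathbf P$. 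So at least one of the two flag poles produces a nonzero (hence, by non-negativity of $\tilde F$, positive) flag curvature. Without this two-pole argument, your case (a) does not close, and the appeal to ``homogeneous flag curvature formula'' or ``careful handling of the Cartan tensor'' that you gesture at is in fact unnecessary once the two-pole observation is in place.
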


\begin{proof}
First we endow $G/H$ with a Riemannian normal homogeneous metric $F$ defined by a bi-invariant inner product $\langle\cdot,\cdot\rangle_{\mathrm{bi}}$ on $\mathfrak{m}$.
By (1) of the lemma, we can find a vector $v$ satisfying $[v,\mathfrak{h}]=0$ and
$\langle v,v\rangle_{\mathrm{bi}}<1$. The left invariant vector field $v\in\mathfrak{m}\subset\mathfrak{g}$ induced on $G$ also defines a $G$-invariant
vector field $V$ on $G/H$. Furthermore, since for any $u\in\mathfrak{m}$, we have
$$\langle [v,u]_\mathfrak{m},u\rangle_{\mathrm{bi}}=\langle [v,u],u\rangle_{\mathrm{bi}}=0,$$
and
$$\langle [v,u]_\mathfrak{m},v\rangle_\mathrm{bi}=\langle
[v,u],v\rangle_{\mathrm{bi}}=0,$$
$V$ is a Killing vector field
of $(G/H,F)$.
 Therefore $(F,V)$ is a Killing navigation datum and it defines a Finsler metric $\tilde{F}$ on $G/H$ by the navigation process.
Since both $F$ and $V$ are $G$-invariant, so is $\tilde{F}$.

Note that the Riemannian normal homogeneous space $(G/H,F)$ is non-negatively curved. So by  Theorem \ref{killingnavigationthm}, $(G/H,\tilde{F})$ is also non-negatively
curved. To show that $\tilde{F}$ satisfies the (FP) Condition, we only need to prove the assertion at $o=eH$. The tangent space $T_{o}(G/H)$ can be identified with
$\mathfrak{m}$. Now let $\mathbf{P}\subset\mathfrak{m}$ be a tangent plane at $o$.  Then there are only the following two cases.

\textbf{Case 1}\quad   $\mathbf{P}$ is a commutative subalgebra. By (2) of
 the lemma, $\langle\mathbf{P},v\rangle_{\mathrm{bi}}=0$. Fixing an
 orthogonal basis $\{v_1,v_2\}$ of $\mathbf{P}$, we can find a nonzero vector $v'_1$ such that
$$\tilde{v'_1}=v'_1+F(v'_1)v=v_1.$$
Since
$\langle v'_1,v_2\rangle^F_{v'_1}=\langle v'_1,v_2\rangle_{\mathrm{bi}}=0$,
 by Theorem \ref{killingnavigationthm}, we have
$$K^{\tilde{F}}(o,v_1,v_1\wedge v_2)=K^F(o,v'_1,v'_1\wedge v_2).$$
On the other hand, by (2) of the lemma, we have $[v,v_2]\neq 0$. Therefore
$$[v'_1,v_2]=-F(v'_1)[v,v_2]\neq 0.$$
Then by the curvature formula of the Riemannian normal
homogeneous spaces, we have $$K^{\tilde{F}}(o,v_1,\mathbf{P})=K^F(o,v'_1,v'_1\wedge v_2)>0.$$

\textbf{Case 2}\quad  $\mathbf{P}$ is not a commutative subalgebra in $\mathfrak{m}$. Fix a bi-invariant  orthogonal basis $\{v_1,v_2\}$ of $\mathbf{P}$ such that
$\langle v,v_2\rangle_{\mathrm{bi}}=0$. Select the nonzero vectors $v'_1$ and $v''_1$
such that
$$\tilde{v'_1}=v'_1+F(v'_1)v=v_1\mbox{ and }
\tilde{v''_1}=v''_1+F(v''_1)v=-v_1.$$ If $[v'_1,v_2]\neq 0$ or $[v''_1,v_2]\neq 0$, then by a similar argument as above, we can get
$K^{\tilde{F}}(o,v_1,\mathbf{P})\neq 0$ or
$K^{\tilde{F}}(o,-v_1,\mathbf{P})\neq 0$. If $[v'_1,v_2]=[v''_1,v_2]=0$, we then have $[v,v_2]=[v_1,v_2]=0$, which is a contradiction.

Therefore,  the homogeneous metric $\tilde{F}$ on $G/H$ is non-negatively curved and
satisfies the (FP) Condition. This completes the proof of the lemma.
\end{proof}

The assumptions in Lemma \ref{proposition-3-1} in fact imply that
$\mathrm{rk}\mathfrak{g}=\mathrm{rk}\mathfrak{h}+1$. Moreover, we assert that $H$ is a regular subgroup of
$G$. In fact,  any Cartan subalgebra of $\mathfrak{h}$ can be extended to a fundamental Cartan subalgebra $\mathfrak{t}$ of $\mathfrak{g}$ by adding $\mathfrak{m}_0$.
For simplicity, we call such $\mathfrak{t}$ a fundamental Cartan subalgebra,
because $\mathfrak{t}\cap\mathfrak{h}$ is  a Cartan subalgebra of $\mathfrak{h}$. By (1) of Lemma \ref{proposition-3-1}, we have $[\mathfrak{m}_0,\mathfrak{h}]=0$. Hence
any root plane of $\mathfrak{h}$ with respect to $\mathfrak{t}\cap\mathfrak{h}$ is also a root plane of $\mathfrak{g}$ with
respect to $\mathfrak{t}$. Thus $H$ is a regular subgroup of $G$.

If we further bi-invariant orthogonally decompose $\mathfrak{m}$ as
$\mathfrak{m}=\mathfrak{m}_0+\mathfrak{m}'$, then
$\mathfrak{m}_1$ is the sum of some root planes with respect to the Cartan subalgebra $\mathfrak{t}$ mentioned above.

Now we are ready to prove Theorem \ref{main-thm-1}.

\medskip
\noindent\textbf{Proof of Theorem \ref{main-thm-1}.}
For each irreducible Hermitian symmetric space
$N_i=G_i/K_i=G_i/(S^1\cdot H_i)$, we denote
$\mathfrak{g}_i=\mathfrak{k}_i+\mathfrak{p}_i$
with $\mathfrak{k}_i=\mathfrak{h}_i+\mathbb{R}v_i$ its bi-invariant orthogonal decomposition.
Then for $M=G/H'$ with $H'=T^{k-1}\cdot H$
and $H=H_1\times\cdots\times H_k$,
we have
the bi-invariant orthogonal decomposition
$\mathfrak{g}=\mathfrak{h}'+\mathfrak{m}$, where
$\mathfrak{m}=\mathbb{R}v+\mathfrak{p}$, $\mathfrak{p}=\sum_{i=1}^k \mathfrak{p}_i$, and
$\mathfrak{h}'=\mathfrak{h}+\mathrm{Lie}(T^{k-1})$ with
$\mathfrak{h}=\bigoplus_{i=1}^k\mathfrak{h}_i$ is the bi-invariant orthogonal complement
of $\mathfrak{m}_0=\mathbb{R}v$ in
$\mathfrak{k}=\bigoplus_{i=1}^k\mathfrak{k}_i$.
Notice that $v=\sum_{i=1}^k c_iv_i$ for some coefficients $c_i$'s.
Our assumption on $M$ implies all $c_i$'s are nonzero.
Fix a Cartan subalgebra $\mathfrak{t}$ of $\mathfrak{k}$, then it contains all $v_i$'s, and $\mathfrak{t}\cap\mathfrak{h}'$ is
a Cartan subalgebra of $\mathfrak{h}'$, i.e. $\mathfrak{t}$ is a fundamental
Cartan subalgebra of $\mathfrak{g}$ for the decomposition
$\mathfrak{g}=\mathfrak{h}'+\mathfrak{m}$. The vector subspace
$\mathfrak{t}\cap\mathfrak{m}$ coincides with
$\mathfrak{m}_0=\mathbb{R}v$. For any $i$, and any root plane $\mathfrak{g}_{\pm\alpha}\subset\mathfrak{p}_i$,
we have $\langle v,\alpha\rangle_{\mathrm{bi}}=c_i\langle v_i,\alpha\rangle_{\mathrm{bi}}\neq 0$. Therefore for any nonzero $v'\in\mathfrak{p}$, the Lie bracket
$[v,v']$ is a nonzero vector in $\mathfrak{p}$.

Now we apply Lemma \ref{proposition-3-1} to prove the theorem.
Condition (1) of Lemma \ref{proposition-3-1} is naturally satisfied. Now we consider Condition (2) of Lemma \ref{proposition-3-1}. Given any 2-dimensional commutative
subalgebra $\mathfrak{t}'$ of $\mathfrak{g}$ contained in
$\mathfrak{m}$, there exists a bi-invariant orthogonal basis
of it, consisting of $u_1=av+u'_1$ and $u_2$, where $a\in\mathbb{R}$ and $u'_1,u_2\in\mathfrak{p}$. The commutative
condition implies $[u_1,u_2]=a[v,u_2]+[u'_1,u_2]=0$. By the symmetric condition, we have $[u'_1,u_2]\in\mathfrak{k}'=\mathfrak{h}'+\mathfrak{m}_0$. Moreover, we have seen $[v,u_2]$ is a nonzero vector in $\mathfrak{p}$. Therefore
we must have $a=0$, i.e. $\langle v,\mathfrak{t}'\rangle_{\mathrm{bi}}=0$. This implies that
Condition (2) of Lemma \ref{proposition-3-1} is also satisfied. So $G/H$ admits a non-negatively curved homogeneous Finsler
metric satisfying the (FP) Condition, which ends the proof of
Theorem \ref{main-thm-1}.

\medskip
In the case $k=1$ for Theorem \ref{main-thm-1}, $N$ is
an irreducible Hermitian symmetric space of compact type,
and $M$ is the canonical $S^1$-bundle over $N$ which
satisfies the assumption in Theorem \ref{main-thm-1}
automatically.
All compact irreducible Hermitian symmetric pairs $(\mathfrak{g},\mathfrak{k})$ can be listed as the following:
\begin{eqnarray*}
& &(A_{p+q-1},A_{p-1}\oplus A_{q-1}\oplus\mathbb{R}),
(B_n,B_{n-1}\oplus\mathbb{R}), (C_n,A_n\oplus\mathbb{R}),
(D_n,D_{n-1}\oplus\mathbb{R}), \\
& &(E_6, D_5\oplus\mathbb{R})\mbox{ and }
(E_7,E_6\oplus\mathbb{R}).
\end{eqnarray*}
So we get the list (\ref{mainlist}) in Corollary \ref{cor} accordingly.

\section{Left invariant metrics on quasi-compact Lie groups satisfying the (FP) Condition}

All the examples of non-negatively curved homogeneous Finsler spaces satisfying the (FP) Condition provided by Theorem \ref{main-thm-1} are compact. Furthermore,
they all satisfy the rank inequality $\mathrm{rk}\mathfrak{g}\leq \mathrm{rk}\mathfrak{h}+1$ which is satisfied by all the positively curved homogeneous spaces. These observations lead to the following problems.
\begin{problem} \label{question-1}
  Is a non-negatively curved homogeneous Finsler space satisfying the (FP) Condition   compact?
\end{problem}
\begin{problem} \label{question-2}
Does any non-negatively curved and flag-wise positively curved homogeneous Finsler spaces
$(G/H,F)$ with compact $G$ satisfy the condition $\mathrm{rk}\mathfrak{g}\leq\mathrm{rk}\mathfrak{h}+1$ for their Lie algebras?
\end{problem}

In Riemannian geometry, the (FP) Condition   is equivalent to the positively curved condition, hence the answers to both the above problems are  positive.
In Finsler geometry, however, the non-negatively curved condition seems to be more subtle, since Theorem \ref{main-thm-2} provides counter examples for
both problems with the non-negatively curved condition dropped.

\medskip
\noindent\textbf{Proof of Theorem \ref{main-thm-2}.} We start with the bi-invariant Riemannian metric $F$ on $G$, which is determined by the bi-invariant inner product
$\langle\cdot,\cdot\rangle_{\mathrm{bi}}$. Let $\mathfrak{t}_1$ be a Cartan
subalgebra in $\mathfrak{g}$ and $v_1$ a generic vector in $\mathfrak{t}_1$, i.e.,
$\mathfrak{t}_1=\mathfrak{c}_{\mathfrak{g}}(v_1)$. Then $v_1$ defines a left invariant vector field $V_1$ on $G$, which is also a Killing vector field of
constant length for
$(G,F)$. For any fixed sufficiently small $\epsilon>0$, the navigation datum $(F,\epsilon V_1)$ defines a Finsler metric $\tilde{F}_{1,\epsilon}$. Since  both $F$ and $V_1$ are left invariant, so is $\tilde{F}_{1,\epsilon}$.
As we have argued before, $(G,\tilde{F}_{1,\epsilon})$ is non-negatively curved.
The following lemma indicates that $\mathfrak{t}_1$ is the only $2$-dimensional subspace $\mathbf{P}\subset\mathfrak{g}=T_eG$ for which the (FP) Condition may not be satisfied for $(G,\tilde{F}_{1,\epsilon})$.

\begin{lemma} \label{lemma-007}
Keep all the above assumptions and notations, and fix the sufficiently small $\epsilon>0$. If the $2$-dimensional subspace
$\mathbf{P}\subset\mathfrak{g}$ satisfies $K^{\tilde{F}_{1,\epsilon}}(e,y,\mathbf{P})\leq 0$ for any nonzero $y\in\mathbf{P}$, then $\mathbf{P}=\mathfrak{t}_1$.
\end{lemma}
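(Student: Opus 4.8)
The plan is to push the computation onto the underlying bi-invariant metric $F$, for which the sectional curvature of the plane spanned by $\langle\cdot,\cdot\rangle_{\mathrm{bi}}$-orthonormal $X,Y$ is the nonnegative quantity $\tfrac14|[X,Y]|_{\mathrm{bi}}^2$, vanishing exactly when $[X,Y]=0$. So I would argue by contradiction: assume $K^{\tilde{F}_{1,\epsilon}}(e,y,\mathbf{P})\le 0$ for every nonzero $y\in\mathbf{P}$ and deduce $\mathbf{P}=\mathfrak{t}_1$. First, fix a $\langle\cdot,\cdot\rangle_{\mathrm{bi}}$-orthonormal basis $\{e_1,e_2\}$ of $\mathbf{P}$ with $\langle v_1,e_2\rangle_{\mathrm{bi}}=0$; such $e_2$ exists because the single linear condition $\langle v_1,w\rangle_{\mathrm{bi}}=0$ cuts out at least a line inside the $2$-plane $\mathbf{P}$. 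For a unit flag pole $\tilde{y}\in\mathbf{P}$, let $y$ be its navigation pre-image, so $y=\tilde{y}-\epsilon|y|_{\mathrm{bi}}v_1$ with $|y|_{\mathrm{bi}}=1+O(\epsilon)$, and let $w$ be a nonzero vector in $\mathbf{P}$ with $\langle w,y\rangle_{\mathrm{bi}}=0$. Since $\langle\tilde{y},y\rangle_{\mathrm{bi}}=|y|_{\mathrm{bi}}^2+\epsilon|y|_{\mathrm{bi}}\langle v_1,y\rangle_{\mathrm{bi}}>0$ for small $\epsilon$, the vector $w$ is not proportional to $\tilde{y}$, hence $\{\tilde{y},w\}$ is a basis of $\mathbf{P}$; by Theorem~\ref{killingnavigationthm} and the standard sectional curvature formula for a bi-invariant metric,
\[
0\ \ge\ K^{\tilde{F}_{1,\epsilon}}(e,\tilde{y},\mathbf{P})=K^{\tilde{F}_{1,\epsilon}}(e,\tilde{y},\tilde{y}\wedge w)=K^{F}(e,y,y\wedge w)=\frac{|[y,w]|_{\mathrm{bi}}^2}{4\,|y|_{\mathrm{bi}}^2|w|_{\mathrm{bi}}^2},
\]
so $[y,w]=0$ for every choice of flag pole.

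Next I would feed in three specific flag poles. For $\tilde{y}=e_1$ the companion must be $w=e_2$, because $\langle e_2,y\rangle_{\mathrm{bi}}=\langle e_2,e_1\rangle_{\mathrm{bi}}-\epsilon|y|_{\mathrm{bi}}\langle e_2,v_1\rangle_{\mathrm{bi}}=0$; hence $0=[y,e_2]=[e_1,e_2]-\epsilon|y_+|_{\mathrm{bi}}[v_1,e_2]$, where $y_+$ is the pre-image of $e_1$. For $\tilde{y}=-e_1$ the companion is again $e_2$, giving $[e_1,e_2]=-\epsilon|y_-|_{\mathrm{bi}}[v_1,e_2]$. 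Combining the two relations, $\epsilon\big(|y_+|_{\mathrm{bi}}+|y_-|_{\mathrm{bi}}\big)[v_1,e_2]=0$, so $[v_1,e_2]=0$, and then $[e_1,e_2]=0$, i.e. $\mathbf{P}$ is an abelian subalgebra. Finally, for $\tilde{y}=e_2$ the companion is $w=e_1-a\,e_2$ with $a=-\epsilon|y|_{\mathrm{bi}}\langle v_1,e_1\rangle_{\mathrm{bi}}$; since $\mathbf{P}$ is abelian, $[\tilde{y},w]=0$, so the relation $0=[y,w]=[\tilde{y},w]-\epsilon|y|_{\mathrm{bi}}[v_1,w]$ collapses to $[v_1,w]=0$, and $[v_1,e_1-a\,e_2]=[v_1,e_1]$ because $[v_1,e_2]=0$; hence $[v_1,e_1]=0$. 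Therefore $\mathbf{P}\subseteq\mathfrak{c}_{\mathfrak{g}}(v_1)=\mathfrak{t}_1$, and since $\dim\mathbf{P}=2=\mathrm{rk}\,\mathfrak{g}=\dim\mathfrak{t}_1$, we conclude $\mathbf{P}=\mathfrak{t}_1$.

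The mechanism hinges on two ideas. The choice of a basis with $e_2\perp_{\mathrm{bi}}v_1$ is what makes the companion of both $e_1$ and $-e_1$ equal to $e_2$ exactly, so that the two relations for $\pm e_1$ can be subtracted without any $\epsilon$-error entering the cancellation; this is the same device that appears in Case~2 of the proof of Lemma~\ref{proposition-main-tool}. The main technical obstacle, and really the heart of the statement, is the uniformity of ``$\epsilon$ sufficiently small'': I would need to produce a single threshold $\epsilon_0$, depending only on $\mathfrak{g}$, $\langle\cdot,\cdot\rangle_{\mathrm{bi}}$ and $v_1$ and not on $\mathbf{P}$, for which the navigation is well defined ($\epsilon|v_1|_{\mathrm{bi}}<1$), $|y|_{\mathrm{bi}}$ stays in a fixed compact subinterval of $(0,\infty)$, and the inequality $\langle\tilde{y},y\rangle_{\mathrm{bi}}>0$ holds. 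All the error terms are $O(\epsilon)$ with constants controlled by $|v_1|_{\mathrm{bi}}$ and the operator norm of the bracket on $\mathfrak{g}$, so this is routine but has to be written out carefully; conceptually it reflects that a generic invariant Killing navigation destroys the flatness of every tangent plane except the distinguished Cartan plane $\mathfrak{t}_1$, which stays flat precisely because $v_1\in\mathfrak{t}_1$.
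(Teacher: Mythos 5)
Your proof is correct and follows the same strategy as the paper's: use the Killing navigation formula to transfer the flag curvature to the non-negatively curved bi-invariant metric, exploit the two flag poles $\pm e_1$ (with common companion $e_2\perp_{\mathrm{bi}}v_1$) so that the $[e_1,e_2]$ terms cancel to give $[e_1,e_2]=[v_1,e_2]=0$, and then use a third flag pole to force $[v_1,e_1]=0$, whence $\mathbf{P}\subset\mathfrak{c}_{\mathfrak{g}}(v_1)=\mathfrak{t}_1$. The only minor differences are that the paper's third flag pole is the generic $w_1+cw_2$ rather than your $e_2$ (and the paper verifies the needed orthogonality via the $\tilde F$-inner product rather than directly on the pre-image as you do), and your worry about producing a $\mathbf{P}$-independent threshold $\epsilon_0$ is unfounded, since the entire argument works for any fixed $\epsilon$ with $\epsilon\,|v_1|_{\mathrm{bi}}<1$.
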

\begin{proof}
 Given any $\mathbf{P}\subset\mathfrak{g}$ as in the lemma, we can find a nonzero vector $w_2\in\mathbf{P}$ with $\langle w_2,v_1\rangle_{\mathrm{bi}}=0$. Then there exists
a nonzero vector $w_1\in\mathbf{P}$ such that $\langle w_1,w_2\rangle_{\mathrm{bi}}=0$. One can also find  some suitable positive numbers $a$ and $b$, such that $w'_1=w_1-av_1$
and $w'_2=-w_1-bv_1$ satisfying the conditions  $$\tilde{w'_1}=w'_1+\epsilon F(w'_1) v_1=w_1\mbox{ and }
\tilde{w'_2}=w'_2+\epsilon F(w'_2)v_1=-w_1.$$ Moreover, we also have
$$\langle w'_1,w_2\rangle_{\mathrm{bi}}=\langle w'_2,w_2\rangle_{\mathrm{bi}}=0.$$
 By Theorem \ref{killingnavigationthm}, we have
\begin{eqnarray}
K^{F}(e,w'_1\wedge w_2)&=&K^{\tilde{F}_{1,\epsilon}}(e,w_1,\mathbf{P})\leq 0,\label{0000}
\end{eqnarray}
 and
 \begin{eqnarray}
K^F(e,w'_2\wedge w_2)&=&K^{\tilde{F}_{1,\epsilon}}(e,-w_1,\mathbf{P})\leq 0.\label{0001}
\end{eqnarray}
 Since $(G,F)$ is non-negatively curved, the  equality holds for both (\ref{0000}) and (\ref{0001}), that is,  we have
$$
[w'_1,w_2]=[w_1,w_2]-a[v_1,w_2]=0,$$
 and
$$[w'_2,w_2]=-[w_1,w_2]-b[v_1,w_2]=0,$$
from which we conclude that $[w_1,w_2]=[v,w_2]=0$.
Sice  $v_1$ is a generic vector in $\mathfrak{t}_1$, we have $w_2\in\mathfrak{t}_1$.
Now if we change the flag pole to another generic $w_3=w_1+cw_2\in\mathbf{P}$,
then there is a nonzero number $d$ such that the vector $w_4=w_2+dw_1$ satisfies the condition
$\langle w_3,w_4\rangle^{\tilde{F}_{1,\epsilon}}_{w_3}=0$. Let $w'_3$ be the nonzero vector
such that $\tilde{w'_3}=w'_3+\epsilon F(w'_3)v=w_3$. Then
$\langle w'_3,w_4\rangle_{w'_3}^F=\rangle_{\mathrm{bi}}=0$, and by Theorem \ref{killingnavigationthm}, we have
$$K^F(e,w'_3\wedge w_4)=K^{\tilde{F}_{1,\epsilon}}(e,w_3,w_3\wedge w_4)\leq 0.$$
So we have $K^F(e,w'_3\wedge w_4)=0$, and
$$[w'_3,w_4]=[w_1+cw_2-\epsilon F(w'_3)v,w_2+dw_1]=-d\epsilon F(w'_3)[v,w_1]=0,$$
i.e.,  $[v,w_1]=0$. Thus $\mathbf{P}\subset\mathfrak{t}_1$.
Now it follows from the assumption  $\mathrm{rk}\mathfrak{g}=2$ that $\mathbf{P}=\mathfrak{t}_1$. This completes  the proof of
the lemma.
\end{proof}

Now we fix the sufficiently small $\epsilon>0$.
The left invariant metric $\tilde{F}_{1,\epsilon}$ on $G$ is real analytic. So for any
$2$-dimensional subspace $\mathbf{P}\neq\mathfrak{t}_1$, the inequality
$K^{\tilde{F}_{1,\epsilon}}(e,y,\mathbf{P})>0$ is satisfied for all nonzero $y$ in
a dense open subset of $\mathbf{P}$.

Let $\mathfrak{t}_2\neq\mathfrak{t}_1$ be another Cartan subalgebra of
$\mathfrak{g}$, and $v_2\in\mathfrak{t}_2$ a nonzero generic  vector. Then for any sufficiently small $\epsilon>0$, We can similarly define the left invariant metric $\tilde{F}_{2,\epsilon}$ from $F$ and $v_2$.

Now we can construct the left invariant metric indicated in the theorem.

Let $v$ be a bi-invariant unit vector in $\mathfrak{t}_1$. Then we can find
two disjoint closed subsets $\mathcal{D}_1$ and $\mathcal{D}_2$ in
$\mathcal{S}=\{u\in\mathfrak{g}|\langle u,u\rangle_{\mathrm{bi}}=1\}$ such that there are two small open neighborhood $\mathcal{U}_1$ and $\mathcal{U}_2$ of $v$ in $\mathcal{S}$ satisfying $\mathcal{U}_1\subset \mathcal{D}_2\subset 
\mathcal{S}\backslash\mathcal{D}_1\subset\mathcal{U}_2$.
Also we have two closed cones in $\mathfrak{g}\backslash 0$, defined by
\begin{eqnarray*}
\mathcal{C}_1&=&\{ty|\forall t> 0\mbox{ and }y\in\mathcal{D}_1\},\mbox{ and }\\
\mathcal{C}_2&=&\{ty|\forall t> 0\mbox{ and }y\in\mathcal{D}_2\}.
\end{eqnarray*}


Viewing $\tilde{F}_{1,\epsilon}$ and $\tilde{F}_{2,\epsilon}$  as Minkowski norms on
$\mathfrak{g}$,
 we now glue $\tilde{F}_{1,\epsilon}|_{\mathcal{C}_1}$ and $\tilde{F}_{2,\epsilon}|_{\mathcal{C}_2}$ to  produce a new Minkowski norm. Let $\mu:\mathfrak{g}\backslash 0\rightarrow[0,1]$ be a smooth positively homogeneous function of degree $0$ (i.e. $\mu(\lambda y)=\lambda \mu(y)$ whenever $\lambda>0$ and $y\neq0$) such that $\mu|_{\mathcal{C}_1}\equiv 1$
and $\mu|_{\mathcal{C}_2}\equiv 0$. Define $F_\epsilon=\mu\tilde{F}_{1,\epsilon}+(1-\mu)\tilde{F}_{2,\epsilon}$.
Obviously $F'_\epsilon$ coincides with $\tilde{F}_{1,\epsilon}$ on $\mathcal{C}_1$,
and with $\tilde{F}_{2,\epsilon}$ on $\mathcal{C}_2$.
When $\epsilon>0$ is small enough, $F_\epsilon$ can be sufficiently $C^4$-close to
$F_0=F$, when restricted to a closed neighborhood of $\mathcal{S}$.
It implies that the strong convexity condition (3) in the definition of Minkowski norms is satisfied  for $F_\epsilon$. The other conditions are easy to check. So $F_\epsilon$ is a Minkowski norm. Using the left actions of $G$, we get a family of left invariant Finsler metrics $F_\epsilon$ on $G$, where $\epsilon>0$ is sufficiently small.

Let us check the (FP) Condition for $F_\epsilon$ for a fixed $\epsilon>0$.
We only need to check at $e$.
Given  any $2$-dimensional subspace $\mathbf{P}\subset\mathfrak{g}$ different
from $\mathfrak{t}_1$, we can find a generic vector $w_1\in\mathbf{P}$ such that a neighborhood of $w_1$ is contained in
$\mathcal{C}_1$, i.e.,  $F_\epsilon$ and $\tilde{F}_{1,\epsilon}$ coincides on a neighborhood of $w_1$, so we have
$K^{F_\epsilon}(e,w_1,\mathbf{P})=K^{\tilde{F}_{1,\epsilon}}(e,w_1,\mathbf{P})>0$.
For $\mathbf{P}=\mathfrak{t}_1$, we can find a generic vector $w_1\in\mathbf{P}$ from $\mathcal{U}_1$. Then similarly we can show that
$K^{F_\epsilon}(e,w_1,\mathbf{P})=K^{\tilde{F}_{2,\epsilon}}(e,w_1,\mathbf{P})>0$.
Thus $F_\epsilon$ satisfies the (FP) Condition.

This completes the proof of Theorem \ref{main-thm-2}.

Finally, we remark that Lemma \ref{lemma-007} and the gluing technique can be applied to discuss the case that $\mathrm{rk}G>2$, the above argument might be refined to be more elegant, which avoids using the speciality that $\mathrm{rk}G=2$.
On the other hand, if $\dim\mathfrak{c}(\mathfrak{g})>1$, then Theorem \ref{simple-flag-curvature-formula-thm} implies
that the flag curvature vanishes for any flag contained in the center,
regardless of the choice of the flag pole. So the (FP) Condition is not satisfied
by any left invariant Finsler
metric on $G$.

These observations encourage us to propose the following problem.

\begin{problem} \label{question-3}
Let $G$ be a Lie group such that $\mathrm{Lie}(G) =\mathfrak{g}$ is a compact non-Abelian Lie algebra with  $\dim\mathfrak{c}(\mathfrak{g})\leq 1$. Does $G$ admit a flag-wise positively curved left invariant metric?
\end{problem}

\section{A key technique for the homogeneous positive flag curvature problem}
In this section, we prove Theorem \ref{mainthm-1}. We first recall some result on odd dimensional positively curved homogeneous Finsler spaces.
\subsection{General theories for odd dimensional positively curved homogeneous Finsler spaces}

Let $(G/H,F)$ be an odd dimensional positively curved homogeneous Finsler space such that $\mathfrak{g}=\mathrm{Lie}(G)$ and $\mathfrak{h}=\mathrm{Lie}(H)$ are compact Lie algebras.
We fix a bi-invariant inner product $\langle\cdot,\cdot\rangle_{\mathrm{bi}}$ of $\mathfrak{g}$, and an  orthogonal decomposition
$\mathfrak{g}=\mathfrak{h}+\mathfrak{m}$ with respect to $\langle\cdot,\cdot\rangle_{\mathrm{bi}}$.

The rank inequality in \cite{XDHH06} implies that
$\mathrm{rk}\mathfrak{g}=\mathrm{rk}\mathfrak{h}+1$. Fix a fundamental Cartan subalgebra $\mathfrak{t}$ of $\mathfrak{g}$, i.e.,
$\mathfrak{t}\cap\mathfrak{h}$ is a Cartan subalgebra of
$\mathfrak{t}$.
Then we have $\mathfrak{t}=\mathfrak{t}\cap\mathfrak{h}+
\mathfrak{t}\cap\mathfrak{m}$ and
$\dim\mathfrak{t}\cap\mathfrak{m}=1$.
We denote $T$ and $T_H$ the maximal tori in $G$ and $H$
generated by $\mathfrak{t}$ and $\mathfrak{t}\cap\mathfrak{h}$
respectively.
In later discussions, roots and root
planes of $\mathfrak{g}$ and $\mathfrak{h}$ are meant to be  with respect to
$\mathfrak{t}$ and $\mathfrak{t}\cap\mathfrak{h}$,  respectively. By the restriction of $\langle\cdot,\cdot\rangle_{\mathrm{bi}}$ to
$\mathfrak{t}$ and $\mathfrak{t}\cap\mathfrak{h}$,
roots of $\mathfrak{g}$ and $\mathfrak{h}$ will also be viewed as vectors in $\mathfrak{t}$ and $\mathfrak{t}\cap\mathfrak{h}$,  respectively.

The most fundamental decomposition for $\mathfrak{g}$ is
$$\mathfrak{g}=\mathfrak{t}+\sum_{\alpha\in\Delta_\mathfrak{g}}
\mathfrak{g}_{\pm\alpha},$$
where $\Delta_\mathfrak{g}\subset\mathfrak{t}$ is the root system of $\mathfrak{g}$ and
$\mathfrak{g}_{\pm\alpha}$ is the root plane of $\mathfrak{g}$ for the roots
$\pm\alpha$.

Another decomposition for $\mathfrak{g}$ is
\begin{equation}\label{decomposition-2}
\mathfrak{g}=\sum_{\alpha'\in\mathfrak{t}\cap\mathfrak{h}}
\hat{\mathfrak{g}}_{\pm\alpha'},
\end{equation}
where $\hat{\mathfrak{g}}_{\pm\alpha'}=
\sum_{\mathrm{pr}_{\mathfrak{h}}(\alpha)=\alpha'}\mathfrak{g}_{\pm\alpha}$
when $\alpha'\neq 0$ and $\hat{\mathfrak{g}}_{0}=
\mathfrak{c}_\mathfrak{g}(\mathfrak{t}\cap\mathfrak{h})$. Notice that
$\hat{\mathfrak{g}}_{0}=\mathfrak{t}+\mathfrak{g}_{\pm\alpha}$ if
there are roots $\pm\alpha\in\mathfrak{t}\cap\mathfrak{m}$, and otherwise
$\hat{\mathfrak{g}}_{0}=\mathfrak{t}$. The following lemma indicates that this decomposition is compatible
with the orthogonal decomposition $\mathfrak{g}=\mathfrak{h}+\mathfrak{m}$,
and the decomposition
$$\mathfrak{h}=\mathfrak{t}\cap\mathfrak{h}+\sum_{\alpha'\in\Delta_\mathfrak{h}}
\mathfrak{h}_{\pm\alpha'},$$
where $\Delta_\mathfrak{h}\subset\mathfrak{t}\cap\mathfrak{h}$ is the root system of $\mathfrak{h}$ and
$\mathfrak{h}_{\pm\alpha'}$ is the root plane of $\mathfrak{h}$ for the roots
$\pm\alpha'$.

\begin{lemma} \label{Lie-alg-decomposition-lemma-1}
For any $\alpha'\in\mathfrak{t}\cap\mathfrak{h}$, we have the following.
\begin{description}
\item{\rm (1)} $\hat{\mathfrak{g}}_{\pm\alpha'}=
    \hat{\mathfrak{g}}_{\pm\alpha'}\cap\mathfrak{h}
+\hat{\mathfrak{g}}_{\pm\alpha'}\cap\mathfrak{m}$.
\item{\rm (2)}
If $\hat{\mathfrak{g}}_{\pm\alpha'}\cap\mathfrak{h}\neq 0$, then we have $\pm\alpha'\in\Delta_\mathfrak{h}$ and
$\mathfrak{h}_{\pm\alpha'}=\hat{\mathfrak{g}}_{\pm\alpha'}\cap\mathfrak{h}$.
\item{\rm (3)}
Denote $\hat{\mathfrak{m}}_{\pm\alpha'}=\hat{\mathfrak{g}}_{\pm\alpha'}\cap\mathfrak{m}$.
Then
$\hat{\mathfrak{m}}_{\pm\alpha'}=\hat{\mathfrak{g}}_{\pm\alpha'}$ if and only if
$\alpha'\notin\Delta_\mathfrak{h}$. In particular, $\hat{\mathfrak{m}}_0=\hat{\mathfrak{g}}_0\cap\mathfrak{m}$ is either one dimensional or three dimensional.
\end{description}
\end{lemma}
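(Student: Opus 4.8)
The plan is to recognize the decomposition (\ref{decomposition-2}) as the weight-space decomposition of $\mathfrak{g}$ under the adjoint action of $\mathfrak{t}\cap\mathfrak{h}$ (equivalently of the subtorus $T_H$), and then simply observe that $\mathfrak{h}$ and $\mathfrak{m}$ are both invariant under this action. The structural fact I would record first is that, since $\langle\cdot,\cdot\rangle_{\mathrm{bi}}$ is bi-invariant, each operator $\mathrm{ad}(x)$ with $x\in\mathfrak{t}\cap\mathfrak{h}$ is skew-symmetric; hence the orthogonal complement of an $\mathrm{ad}(\mathfrak{t}\cap\mathfrak{h})$-invariant subspace is again $\mathrm{ad}(\mathfrak{t}\cap\mathfrak{h})$-invariant. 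As $\mathfrak{h}$ is a subalgebra it is $\mathrm{ad}(\mathfrak{t}\cap\mathfrak{h})$-invariant, and therefore so is $\mathfrak{m}=\mathfrak{h}^{\perp}$. Next I would check that for $\alpha'\neq 0$ the subspace $\hat{\mathfrak{g}}_{\pm\alpha'}$ is exactly the real $\mathrm{ad}(\mathfrak{t}\cap\mathfrak{h})$-weight space of $\mathfrak{g}$ attached to the weight $\pm\alpha'$: for a root $\alpha$ of $\mathfrak{g}$ the restriction of the weight $\alpha$ to $\mathfrak{t}\cap\mathfrak{h}$ is $\langle\alpha,\cdot\rangle|_{\mathfrak{t}\cap\mathfrak{h}}=\langle\mathrm{pr}_{\mathfrak{h}}(\alpha),\cdot\rangle$, while $\mathfrak{t}$ itself sits in the zero-weight space; so collecting root planes with $\mathrm{pr}_{\mathfrak{h}}(\alpha)=\alpha'$ gives precisely the $\pm\alpha'$-weight space, and likewise $\hat{\mathfrak{g}}_0=\mathfrak{c}_{\mathfrak{g}}(\mathfrak{t}\cap\mathfrak{h})$ is the zero-weight space.

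Given this, part (1) is immediate: since $\mathfrak{h}$ and $\mathfrak{m}$ are $\mathrm{ad}(\mathfrak{t}\cap\mathfrak{h})$-invariant, each weight space $\hat{\mathfrak{g}}_{\pm\alpha'}$ decomposes as $(\hat{\mathfrak{g}}_{\pm\alpha'}\cap\mathfrak{h})\oplus(\hat{\mathfrak{g}}_{\pm\alpha'}\cap\mathfrak{m})$. For part (2) (understood for $\alpha'\neq 0$), $\hat{\mathfrak{g}}_{\pm\alpha'}\cap\mathfrak{h}$ is the $\pm\alpha'$-weight space of $\mathrm{ad}(\mathfrak{t}\cap\mathfrak{h})$ acting on $\mathfrak{h}$; if it is nonzero then $\pm\alpha'$ is a nonzero weight of $(\mathfrak{h},\mathfrak{t}\cap\mathfrak{h})$, i.e. $\pm\alpha'\in\Delta_{\mathfrak{h}}$, and by definition of root planes that weight space is $\mathfrak{h}_{\pm\alpha'}$. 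Conversely $\mathfrak{h}_{\pm\alpha'}$ carries weight $\pm\alpha'$ and lies in $\mathfrak{h}$, hence is contained in $\hat{\mathfrak{g}}_{\pm\alpha'}\cap\mathfrak{h}$, giving the asserted equality $\mathfrak{h}_{\pm\alpha'}=\hat{\mathfrak{g}}_{\pm\alpha'}\cap\mathfrak{h}$.

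For part (3), combining (1) and (2): for $\alpha'\neq 0$, $\hat{\mathfrak{m}}_{\pm\alpha'}=\hat{\mathfrak{g}}_{\pm\alpha'}$ holds iff $\hat{\mathfrak{g}}_{\pm\alpha'}\cap\mathfrak{h}=0$, and by part (2) together with the converse just noted this happens exactly when $\alpha'\notin\Delta_{\mathfrak{h}}$. For the last statement about $\hat{\mathfrak{m}}_0$, I would compute $\hat{\mathfrak{g}}_0\cap\mathfrak{h}=\mathfrak{c}_{\mathfrak{h}}(\mathfrak{t}\cap\mathfrak{h})=\mathfrak{t}\cap\mathfrak{h}$, using that a maximal torus of a compact Lie algebra is self-centralizing; by part (1) this gives $\hat{\mathfrak{g}}_0=(\mathfrak{t}\cap\mathfrak{h})\oplus\hat{\mathfrak{m}}_0$, so $\dim\hat{\mathfrak{m}}_0=\dim\hat{\mathfrak{g}}_0-\dim(\mathfrak{t}\cap\mathfrak{h})$. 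Since $\dim(\mathfrak{t}\cap\mathfrak{m})=1$ and the root system of $\mathfrak{g}$ is reduced, there is at most one pair of roots $\pm\alpha$ lying in $\mathfrak{t}\cap\mathfrak{m}$, so (as recorded in the text before the lemma) $\hat{\mathfrak{g}}_0$ is either $\mathfrak{t}$ or $\mathfrak{t}+\mathfrak{g}_{\pm\alpha}$; using $\dim(\mathfrak{t}\cap\mathfrak{m})=\dim\mathfrak{t}-\dim(\mathfrak{t}\cap\mathfrak{h})=1$ this yields $\dim\hat{\mathfrak{m}}_0=1$ or $\dim\hat{\mathfrak{m}}_0=3$ respectively.

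The whole argument is essentially bookkeeping once the weight-space reinterpretation is in place, so I do not expect a real obstacle; the only point requiring care is keeping the zero-weight case separate, since parts (2) and (3) are genuinely false for $\alpha'=0$ (there $\hat{\mathfrak{g}}_0\cap\mathfrak{h}=\mathfrak{t}\cap\mathfrak{h}\neq 0$ although $0\notin\Delta_{\mathfrak{h}}$), and this is where the statement must be read as concerning nonzero $\alpha'$ with the $\hat{\mathfrak{m}}_0$ assertion handled on its own.
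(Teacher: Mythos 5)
Your proof is correct. The paper explicitly omits the proof of this lemma ("easy and will be omitted"), and the weight-space reinterpretation you use — $\hat{\mathfrak{g}}_{\pm\alpha'}$ as the isotypical component of $\mathrm{ad}(\mathfrak{t}\cap\mathfrak{h})$, with invariance of $\mathfrak{h}$ and of $\mathfrak{m}=\mathfrak{h}^{\perp}$ via skew-symmetry of $\mathrm{ad}$ — is the natural route the authors presumably had in mind; the caveat you flag about $\alpha'=0$ in parts (2) and (3) is a genuine (if minor) imprecision in the statement, and your separation of the $\hat{\mathfrak{m}}_0$ dimension count resolves it correctly.
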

By this lemma,  the decomposition (\ref{decomposition-2}) implies that
\begin{equation}\label{decomposition-2-for-m}
\mathfrak{m}=\sum_{\alpha'\in\mathfrak{t}\cap\mathfrak{h}}
\hat{\mathfrak{m}}_{\pm\alpha'}.
\end{equation}
The proof of Lemma \ref{Lie-alg-decomposition-lemma-1} is easy and will be omitted.

Let $\alpha'$ be a nonzero vector in $\mathfrak{t}\cap\mathfrak{h}$.
Denote by $\mathfrak{t}'$ the  orthogonal complement of $\alpha'$ in $\mathfrak{t}\cap\mathfrak{h}$
with respect to $\langle\cdot,\cdot\rangle_{\mathrm{bi}}$, and $\mathrm{pr}_{\mathfrak{t}'}$ the orthogonal projection to $\mathfrak{t}'$. Then we have another decomposition
\begin{equation}\label{decomposition-3-for-m}
\mathfrak{m}=\sum_{\gamma''\in\mathfrak{t}'}
\hat{\hat{\mathfrak{m}}}_{\pm\gamma''},
\end{equation}
where $\hat{\hat{\mathfrak{m}}}_{\pm\gamma''}=\sum_{
\gamma'\in\mathfrak{t}\cap\mathfrak{h},
\mathrm{pr}_{\mathfrak{t}'}(\gamma')=\gamma''}
\hat{\mathfrak{m}}_{\pm\gamma'}$.

The importance of (\ref{decomposition-2-for-m}) and (\ref{decomposition-3-for-m}) lies in the fact that they are orthogonal with
respect to certain inner product $g_y^F$ defined by the homogeneous Finsler metric $F$. We will use the following orthogonality lemma.

\begin{lemma}\label{orthogonality-lemma}
Keep all above assumptions and notations. Then we have the following.
\begin{description}
\item{\rm (1)}
Let $\mathfrak{t}'\subset\mathfrak{t}\cap\mathfrak{h}$ be the Lie algebra for a codimension one sub-torus $T'$ in $T_H$, which is the bi-invariant orthogonal complement
of a nonzero vector $\alpha'\in\mathfrak{t}\cap\mathfrak{h}$.
Then the decomposition (\ref{decomposition-3-for-m})
is $g^F_u$-orthogonal for any nonzero $u\in\hat{\hat{\mathfrak{m}}}_0$.
\item{\rm (2)}
For any nonzero vector
$u\in\hat{\mathfrak{m}}_{\pm\alpha'}$ with $\alpha'\neq 0$, we have
$\langle u,[u,\mathfrak{t}\cap\mathfrak{h}]\rangle_u^F=0$.
\end{description}
\end{lemma}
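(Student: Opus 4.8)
The plan is to derive both statements from a single mechanism: the Minkowski norm $F|_{\mathfrak{m}}$ is invariant under $\mathrm{Ad}(T_H)$ (indeed under any subtorus of $T_H$), and the fundamental tensor $\langle\cdot,\cdot\rangle^F_u$ is homogeneous of degree $0$ in $u$, so that $C^F_u(u,\cdot,\cdot)=0$. Everything reduces to combining these two facts with Schur's lemma for real torus representations.

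For part (2), I would differentiate the isometry identity $\langle\mathrm{Ad}(\exp tX)v,\mathrm{Ad}(\exp tX)w\rangle^F_{\mathrm{Ad}(\exp tX)u}=\langle v,w\rangle^F_u$, which holds for every $X\in\mathfrak{t}\cap\mathfrak{h}$ and all $u\ne 0$, $v,w\in\mathfrak{m}$ (note $[\mathfrak{t}\cap\mathfrak{h},\mathfrak{m}]\subseteq\mathfrak{m}$ since $\mathfrak{m}$ is $\mathrm{Ad}(H)$-invariant), at $t=0$. Writing $A=\mathrm{ad}(X)|_{\mathfrak{m}}$, this yields
\[
\langle Av,w\rangle^F_u+\langle v,Aw\rangle^F_u+2C^F_u(v,w,Au)=0 .
\]
Specializing to $v=w=u$ and using $C^F_u(u,\cdot,\cdot)=0$ kills the Cartan term and leaves $2\langle Au,u\rangle^F_u=0$, i.e. $\langle u,[u,X]\rangle^F_u=0$ for all $X\in\mathfrak{t}\cap\mathfrak{h}$. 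In fact this holds for every nonzero $u\in\mathfrak{m}$; the restriction $u\in\hat{\mathfrak{m}}_{\pm\alpha'}$ in the statement is merely the context in which the identity is invoked later, and I would simply record the stronger fact.

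For part (1), the key observation to make is that the decomposition (\ref{decomposition-3-for-m}) is nothing but the isotypic decomposition of $\mathfrak{m}$ as a real representation of the subtorus $T'$ with $\mathrm{Lie}(T')=\mathfrak{t}'$: on each root plane $\mathfrak{g}_{\pm\alpha}$ the operator $\mathrm{ad}(X)$, $X\in\mathfrak{t}'$, acts by rotation with frequency $\langle\alpha,X\rangle=\langle\mathrm{pr}_{\mathfrak{t}'}\alpha,X\rangle$, so the root planes sharing the value $\gamma''=\mathrm{pr}_{\mathfrak{t}'}\alpha$ assemble into the $T'$-isotypic summand $\hat{\hat{\mathfrak{m}}}_{\pm\gamma''}$, with $\hat{\hat{\mathfrak{m}}}_0$ the trivial (fixed) summand; since $\mathfrak{h}$ and $\mathfrak{m}$ are $\mathrm{Ad}(T')$-invariant, the isotypic decomposition of $\mathfrak{g}$ restricts to $\mathfrak{m}$. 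Now fix a nonzero $u\in\hat{\hat{\mathfrak{m}}}_0$: then $\mathrm{Ad}(T')$ fixes $u$, hence preserves $\langle\cdot,\cdot\rangle^F_u$ (by the same isometry relation as above, with $v,w$ unspecialized and $\mathrm{Ad}(\exp tX)u=u$). Each nontrivial summand $\hat{\hat{\mathfrak{m}}}_{\pm\gamma''}$ is a direct sum of copies of a self-dual (rotation-type) irreducible, and for $\gamma_1''\ne\pm\gamma_2''$ the summands $\hat{\hat{\mathfrak{m}}}_{\pm\gamma_1''}$ and $\hat{\hat{\mathfrak{m}}}_{\pm\gamma_2''}$ share no irreducible constituent, so any $\mathrm{Ad}(T')$-invariant bilinear form — in particular $\langle\cdot,\cdot\rangle^F_u$ — pairs them trivially by Schur; the trivial summand $\hat{\hat{\mathfrak{m}}}_0$ is likewise orthogonal to all the others. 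Hence (\ref{decomposition-3-for-m}) is $g^F_u$-orthogonal.

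The computations here are routine; the only point needing a little care is the bookkeeping in part (1) — verifying that $\hat{\hat{\mathfrak{m}}}_0$ is exactly the whole $T'$-fixed subspace of $\mathfrak{m}$ (not a proper piece of it) and that the \emph{real} form of Schur's lemma applies, i.e. each $\hat{\hat{\mathfrak{m}}}_{\pm\gamma''}$ with $\gamma''\ne 0$ is self-dual as a $T'$-module. Both are immediate from the structure of torus actions on real vector spaces. I would also note that part (2) is just the "doubly contracted" case of the invariance identity used in part (1), so the two assertions genuinely rest on the same argument and can be presented together.
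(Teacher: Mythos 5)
Your proposal is correct, and for part (1) it coincides with the paper's argument: both observe that $\mathrm{Ad}(T')$ fixes $u$ and preserves $F$, hence preserves $g_u^F$, and then the summands $\hat{\hat{\mathfrak{m}}}_{\pm\gamma''}$ are pairwise non-isomorphic $T'$-isotypic pieces, so any $\mathrm{Ad}(T')$-invariant bilinear form pairs them to zero. Your extra care about $\hat{\hat{\mathfrak{m}}}_0$ being the entire fixed subspace and about the real form of Schur is appropriate but not spelled out in the paper either.

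For part (2) you take a genuinely different route. The paper's proof is geometric and specific to the situation: for $u\in\hat{\mathfrak{m}}_{\pm\alpha'}$ with $\alpha'\neq 0$, the line $[\mathfrak{t}\cap\mathfrak{h},u]$ is one-dimensional, the two-plane $\mathbb{R}u+[\mathfrak{t}\cap\mathfrak{h},u]$ is $\mathrm{Ad}(T_H)$-invariant with $T_H$ acting by rotations, so the restriction of $F$ to that plane must be Euclidean up to scale, and the orthogonality of $u$ and $u^{\perp}$ in $g_u^F$ follows by restricting the fundamental form. Your proof instead re-derives the infinitesimal isometry identity
\[
\langle Av,w\rangle^F_u+\langle v,Aw\rangle^F_u+2C^F_u(v,w,Au)=0,\qquad A=\mathrm{ad}(X)|_{\mathfrak m},\ X\in\mathfrak t\cap\mathfrak h,
\]
and then sets $v=w=u$ and uses $C^F_u(u,\cdot,\cdot)=0$. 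This identity is precisely Theorem~1.3 of \cite{DH04}, which the paper itself cites as equation~(\ref{10002}) in the proof of Lemma~\ref{key-tech-lemma-1}, so your route uses a tool already in the paper's arsenal, merely applied here rather than the two-plane argument. Your version is cleaner in one respect: it proves the stronger statement $\langle u,[u,\mathfrak t\cap\mathfrak h]\rangle_u^F=0$ for \emph{all} nonzero $u\in\mathfrak m$, without the restriction $u\in\hat{\mathfrak{m}}_{\pm\alpha'}$, $\alpha'\neq 0$; the paper's two-plane argument genuinely needs $\alpha'\neq 0$ to know the orbit is a circle (not a point). Both are valid; yours unifies (1) and (2) under a single invariance principle, while the paper's version of (2) is slightly more self-contained geometrically.
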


\begin{proof}
The first statement is very similar to Lemma 3.7 in \cite{XD15}, and so does its proof.

The $\mathrm{Ad}(T')$-actions preserve both $F$ and $\hat{\hat{\mathfrak{m}}}_0$. So for any nonzero vector $u\in\hat{\hat{\mathfrak{m}}}_0$, the inner product $g_u^F$ is $\mathrm{Ad}(T')$-invariant, and the summands in the decomposition (\ref{decomposition-3-for-m}) correspond to different irreducible representations of $T'$. This proves  the $g_u^F$-orthogonality
of (\ref{decomposition-3-for-m}).

(2) By the $\mathrm{Ad}(T_H)$-invariance of $F$, the restriction of
$F$ to the 2-dimensional subspace
$\mathbb{R}u+[\mathfrak{t}\cap\mathfrak{h},u]$ coincides with the bi-invariant
inner product up to a scalar change. Hence the assertion (2) follows immediately.
\end{proof}

\subsection{Proof of Theorem \ref{mainthm-1}}

The main technique in the proof of Theorem \ref{mainthm-1}
can be summarized as the following lemma, which may be applied to more general situations (for example, see Lemma \ref{last-lemma} in Subsection \ref{subsection-2}).

\begin{lemma}\label{key-tech-lemma-1}
Let $(G/H,F)$ be a homogeneous Finsler space with compact Lie algebra $\mathfrak{g}=\mathrm{Lie}(G)$, and compact subalgebra
$\mathfrak{h}=\mathrm{Lie}(H)$ with $\mathrm{rk}\mathfrak{h}=\mathrm{rk}\mathfrak{g}-1$. Fix a bi-invariant
inner product $\langle\cdot,\cdot\rangle_{\mathrm{bi}}$ on $\mathfrak{g}$,
an orthogonal decomposition $\mathfrak{g}=\mathfrak{h}+\mathfrak{m}$
accordingly,
a fundamental Cartan subalgebra $\mathfrak{t}$, and  a bi-invariant
unit vector $u_0\in\mathfrak{t}\cap\mathfrak{m}$.
Assume that $\alpha$ and $\beta$ are roots of $\mathfrak{g}$ satisfying the following conditions:
\begin{description}
\item{\rm (1)} $[\mathfrak{g}_{\pm\alpha},\mathfrak{g}_{\pm\beta}]=0$.
\item{\rm (2)} neither $\alpha$ nor $\beta$ is contained in $\mathfrak{t}\cap\mathfrak{h}$ or $\mathfrak{t}\cap\mathfrak{m}$.
\item{\rm (3)} There exists an $\mathrm{Ad}(T_H)$-invariant decomposition $\mathfrak{m}=\mathfrak{m}'+\mathfrak{m}''+\mathfrak{m}'''$
    such that $\mathfrak{m}'=
    \mathfrak{t}\cap\mathfrak{m}+\mathfrak{g}_{\pm\alpha}$,
    $\mathfrak{m}''=\mathfrak{g}_{\pm\beta}$, $[\mathfrak{m}'+\mathfrak{m}'',\mathfrak{m}''']_\mathfrak{m}\subset
    \mathfrak{m}'''$, and this decomposition is $g_u^F$-orthogonal for any nonzero
    $u\in\mathfrak{m}'\backslash \mathfrak{t}\cap\mathfrak{m}$.
\item{\rm (4)} For any bi-invariant unit vectors $u\in\mathfrak{g}_{\pm\alpha}$ and $v\in\mathfrak{g}_{\pm\beta}$, the values of
$\langle v,v\rangle_{u+tu_0}^F$ only depend on $t$.
\end{description}
Then $(G/H,F)$ can not be positively curved.
\end{lemma}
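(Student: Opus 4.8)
The plan is to produce a contradiction from the assumption that $(G/H,F)$ is positively curved by exhibiting a flag whose flag curvature is non-positive. The natural candidate is a flag $u\wedge v$ with $u$ a bi-invariant unit vector in $\mathfrak{g}_{\pm\alpha}$ and $v$ a bi-invariant unit vector in $\mathfrak{g}_{\pm\beta}$; by condition (1) these commute, so $\mathrm{span}\{u,v\}$ is a $2$-dimensional commutative subalgebra, and I want to be in a position to apply Theorem \ref{simple-flag-curvature-formula-thm}. To do that I must check the hypothesis $\langle[u,\mathfrak{m}],u\rangle_u^F=0$, i.e. that the spray vector field $\eta(u)$ vanishes. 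This is exactly where conditions (3) and (4) enter. First I would use the $g_u^F$-orthogonal decomposition $\mathfrak{m}=\mathfrak{m}'+\mathfrak{m}''+\mathfrak{m}'''$ in (3) together with Lemma \ref{orthogonality-lemma}(2) (applied on $\mathfrak{m}'$, where $u\in\mathfrak{g}_{\pm\alpha}\subset\mathfrak{m}'$ and $[\mathfrak{t}\cap\mathfrak{h},u]$ lies in $\mathfrak{m}'$) to kill the $\mathfrak{t}\cap\mathfrak{h}$ part; then the bracket relations $[\mathfrak{g}_{\pm\alpha},\mathfrak{g}_{\pm\beta}]=0$ and $[\mathfrak{m}'+\mathfrak{m}'',\mathfrak{m}''']_\mathfrak{m}\subset\mathfrak{m}'''$ confine $[u,\mathfrak{m}]_\mathfrak{m}$ to directions that are $g_u^F$-orthogonal to $u$, so $\eta(u)=0$.

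With $\eta(u)=0$ in hand, Theorem \ref{simple-flag-curvature-formula-thm} gives
\[
K^F(o,u,u\wedge v)=\frac{\langle U(u,v),U(u,v)\rangle_u^F}{\langle u,u\rangle_u^F\langle v,v\rangle_u^F-[\langle u,v\rangle_u^F]^2},
\]
where $\langle U(u,v),w\rangle_u^F=\tfrac12(\langle[w,u]_\mathfrak{m},v\rangle_u^F+\langle[w,v]_\mathfrak{m},u\rangle_u^F)$. The next step is to show $U(u,v)=0$, which forces $K^F(o,u,u\wedge v)=0$, contradicting positive curvature. To compute $U(u,v)$ I would test against an arbitrary $w\in\mathfrak{m}$, decomposed along $\mathfrak{m}'+\mathfrak{m}''+\mathfrak{m}'''$: the bracket $[w,u]_\mathfrak{m}$ pairs against $v\in\mathfrak{m}''$, and $[w,v]_\mathfrak{m}$ pairs against $u\in\mathfrak{g}_{\pm\alpha}\subset\mathfrak{m}'$, and in each case the $g_u^F$-orthogonality in (3) plus condition (1) should wipe out all contributions except possibly those coming from the $w\in\mathfrak{t}\cap\mathfrak{m}$ direction. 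This residual term is the delicate one: the pairing $\langle[u_0,u]_\mathfrak{m},v\rangle_u^F+\langle[u_0,v]_\mathfrak{m},u\rangle_u^F$ need not vanish termwise. Here is where condition (4) does its work — differentiating the identity "$\langle v,v\rangle_{u+tu_0}^F$ depends only on $t$" in the $\mathfrak{g}_{\pm\alpha}$-direction and using the Cartan tensor identity $C_u^F(u,v,v)$-type relations, one shows the relevant directional derivative of $g^F$ vanishes, which translates (via the definition of $N$ and $\eta$, or directly via $\tfrac{d}{dt}\langle v,v\rangle_{u+tw}^F=2C_u^F(v,v,w)$) into the vanishing of exactly this last term. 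Putting the pieces together gives $U(u,v)=0$.

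The main obstacle is the last paragraph's bookkeeping: carefully identifying which $g_u^F$-pairings survive after using all of (1)–(3), and then translating condition (4) into the precise statement that the surviving $\mathfrak{t}\cap\mathfrak{m}$-component of $U(u,v)$ is zero. In particular one must be careful that $u$ is not a generic vector of $\mathfrak{m}'$ but lies in a root plane, so Lemma \ref{orthogonality-lemma} must be invoked in the correct form, and one must verify that $[u_0,u]_\mathfrak{m}$ and $[u_0,v]_\mathfrak{m}$ are controlled — $[u_0,u]\in\mathfrak{g}_{\pm\alpha}$ and $[u_0,v]\in\mathfrak{g}_{\pm\beta}$ since $u_0\in\mathfrak{t}$ — so that condition (4), which is an infinitesimal flatness statement of $F$ along the $u_0$-direction near $u$, is exactly the hypothesis needed and no more. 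Everything else (the commutative-subalgebra check, the denominator being positive by strong convexity) is routine.
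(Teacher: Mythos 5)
Your proposal has a genuine gap at the very first step, and it is precisely the gap the paper is built to work around. You claim that $\eta(u)=0$ for $u$ a bi-invariant unit vector in $\mathfrak{g}_{\pm\alpha}$. To check this, write $\{u,\bar u\}$ for a bi-invariant orthonormal basis of $\mathfrak{g}_{\pm\alpha}$ and test $\eta(u)$ against $w=\bar u$:
\[
\langle \eta(u),\bar u\rangle_u^F=\langle u,[\bar u,u]_\mathfrak{m}\rangle_u^F .
\]
Since $[\bar u,u]\in\mathfrak{t}$ and $\alpha\notin\mathfrak{t}\cap\mathfrak{h}$, the projection $[\bar u,u]_\mathfrak{m}$ is a nonzero multiple of $u_0$, so this term is proportional to $\langle u,u_0\rangle_u^F$. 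For a general (non-reversible) homogeneous Finsler metric there is no reason for $\langle u,u_0\rangle_u^F$ to vanish; bi-invariant orthogonality $\langle u,u_0\rangle_{\mathrm{bi}}=0$ does not transfer to $g_u^F$-orthogonality. Lemma \ref{orthogonality-lemma}(2) only gives $\langle u,[u,\mathfrak{t}\cap\mathfrak{h}]\rangle_u^F=0$, i.e.\ $\langle u,\bar u\rangle_u^F=0$, which is the wrong pairing. This is exactly the obstacle the introduction points out (that condition (\ref{5000}) cannot be verified without reversibility), so the hypothesis needed to invoke Theorem \ref{simple-flag-curvature-formula-thm} fails.

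The same quantity $\langle u,u_0\rangle_u^F$ ruins the second half of your plan as well: even if $\eta(u)$ were zero, testing $U(u,v)$ against $w=\bar v$ (a bi-invariant unit vector in $\mathfrak{g}_{\pm\beta}$ orthogonal to $v$) gives $\langle U(u,v),\bar v\rangle_u^F=\tfrac12\langle[\bar v,v]_\mathfrak{m},u\rangle_u^F$, again proportional to $\langle u_0,u\rangle_u^F$, so $U(u,v)$ need not vanish and you would not get flag curvature zero at the pole $u$. The paper's actual proof is structurally different: it does not use the simplified formula at all. It applies the full homogeneous flag curvature formula (Theorem \ref{homogeneous-flag-curvature-thm}) to the one-parameter family of flag poles $u(t)=u+tu_0$, shows $\eta(u(t))=c_1(t)\bar u$ (generally nonzero), computes $N(u(t),v)=c_2(t)\bar v$ with
\[
c_2(t)\langle v,v\rangle_{u(t)}^F=c\langle u_0,u\rangle_{u(t)}^F+ct\langle u_0,u_0\rangle_{u(t)}^F,
\]
derives $\langle R_{u(t)}v,v\rangle_{u(t)}^F=c_2(t)^2\langle v,v\rangle_{u(t)}^F\ge 0$, and then invokes an intermediate value argument in $t$ (using the boundedness of the metric coefficients as $t\to\pm\infty$) to find $t_0$ with $c_2(t_0)=0$, hence a flag of zero curvature. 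The move from a single flag pole $u$ to the family $u(t)$, plus the sign change in $c_2$, is the essential new idea that your proposal is missing.
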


\begin{proof}
We fix a bi-invariant unit vector $u_0\in\mathfrak{t}\cap\mathfrak{m}$. Let $\{u,\bar{u}\}$
be a bi-invariant orthonormal basis for $\mathfrak{g}_{\pm\alpha}$. 

Denote $u(t)=u+tu_0
\in\mathfrak{m}'\backslash\mathfrak{t}\cap\mathfrak{m}$. 
By the assumption (2) of the lemma, $\alpha\notin\mathfrak{t}\cap\mathfrak{m}$, we can use (2) of Lemma \ref{orthogonality-lemma} to get
\begin{equation}\label{10000}
\langle u(t),[u(t),\mathfrak{t}\cap\mathfrak{h}]
\rangle_{u(t)}^F=
\langle u(t),\mathbb{R}\bar{u}\rangle_{u(t)}^F=0,
\end{equation}
i.e. $\langle u(t),\bar{u}\rangle_{u(t)}^F=0$.

By the assumption (4) of the lemma, i.e. $\langle v,v\rangle_{u(t)}^F$ does not depend on the choice
of bi-invariant unit vector $v$, we have
$\langle v,\bar{v}\rangle_{u(t)}^F=0$ when $v,\bar{v}\in\mathfrak{g}_{\pm\beta}$ are bi-invariant orthogonal. So we have
\begin{equation}\label{10001}
\langle v,[v,\mathfrak{t}]\rangle_{u(t)}^F=0.
\end{equation}
for any $v\in\mathfrak{g}_{\pm\beta}$. Now we fix
a bi-invariant orthonormal basis $\{v,\bar{v}\}$ of
$\mathfrak{g}_{\pm\beta}$.

We will apply the homogeneous flag curvature formula
(\ref{6000}) to the flag curvature $K^F(o,u(t),u(t)\wedge v)$.
In the following, we calculate the components in (\ref{6000})
one by one.

\medskip
(1) The spray vector field $\eta(u(t))$.

By its definition, we have
\begin{eqnarray*}
& &\langle\eta(u(t)),\mathfrak{t}\cap\mathfrak{m}
+\mathbb{R}u(t)+\mathfrak{m}''+\mathfrak{m}'''\rangle_{u(t)}^F\\
&=&\langle u(t),[\mathfrak{t}\cap\mathfrak{m}+
\mathfrak{g}_{\pm\beta}+\mathfrak{m}''',u(t)]_\mathfrak{m}
\rangle_{u(t)}^F\\
&=&\langle u(t),\mathbb{R}\bar{u}+
\mathfrak{g}_{\pm\beta}+\mathfrak{m}'''\rangle_{u(t)}^F=0.
\end{eqnarray*}
Notice that we have used (\ref{10000}), and the assumptions (1) and (3) here. Using (\ref{10000}) and the assumption (3) again,
we get
\begin{equation}\label{10010}
\eta(u(t))=c_1(t)\bar{u},
\end{equation}
where the real function $c_1(t)$ depends smoothly on $t$.

\medskip
(2) The Cartan tensor $C^F_{u(t)}(w_1,w_2,\bar{u})$.

If $w_1,w_2\in\mathfrak{m}''=\mathfrak{g}_{\pm\beta}$, then
there exists a vector $h\in\mathfrak{t}\cap\mathfrak{h}$ such
that $\langle h,\alpha\rangle\neq 0$. By Theorem 1.3 in \cite{DH04}, we have
\begin{equation}\label{10002}
\langle[h,w_1],w_2\rangle_{u(t)}^F+
\langle[h,w_2],w_1\rangle_{u(t)}^F+
2C^F_{u(t)}(w_1,w_2,[h,u(t)])=0.
\end{equation}
By (\ref{10001}), 
$\langle[h,w_1],w_2\rangle_{u(t)}^F=
\langle[h,w_2],w_1\rangle_{u(t)}^F=0$.  
Since $\langle h,\alpha\rangle_{\mathrm{bi}}\neq 0$, 
$[h,u(t)]$ is a nonzero multiple of $\bar{u}$, thus
we get $C^F_{u(t)}(w_1,w_2,\bar{u})=0$.

If $w_1\in\mathfrak{m}''$ and $w_2\in\mathfrak{m}'+\mathfrak{m}'''$,
we can similarly prove $C^F_{u(t)}(w_1,w_2,\bar{u})=0$,
using (\ref{10001}), and the assumption (3) of the lemma.

To summarize, we have
\begin{equation}\label{10003}
C^F_{u(t)}(\mathfrak{m}'',\cdot,\eta(u(t)))\equiv 0.
\end{equation}

\medskip
(3) The connection operator $N(u(t),v)$.

Using (\ref{10003}) and the assumptions (1) and (3), it can be
checked directly from the definition of $N(\cdot,\cdot)$ in (\ref{6000}) that,
for any $w_1\in\mathfrak{m}''$ and $w_2\in\mathfrak{m}'+\mathfrak{m}'''$,
\begin{equation}\label{10004}
\langle N(u(t),w_1),w_2\rangle_{u(t)}^F=0.
\end{equation}

On the other hand, by (\ref{10001}) and
(\ref{10003}), we can get
\begin{eqnarray}\label{10005}
& &\langle N(u(t),v),v\rangle_{u(t)}^F\nonumber\\
&=&\langle[v,u(t)]_\mathfrak{m},v\rangle_{u(t)}^F
+C^F_{u(t)}(v,v,\eta(u(t)))\nonumber\\
&=&\langle [v,u(t)]_\mathfrak{m},v\rangle_{u(t)}^F
= t\langle[v,u_0],v\rangle_{u(t)}^F = 0.
\end{eqnarray}

Summarizing (\ref{10004}) and (\ref{10005}),
we have
\begin{equation}\label{10006}
N(u(t),v)=c_2(t)\bar{v}.
\end{equation}

To determine the functuion $c_2(t)$, we continue with the calculation
\begin{eqnarray}
& &2\langle N(u(t),v),\bar{v}\rangle_{u(t)}^F\nonumber\\
&=&\langle[\bar{v},v]_\mathfrak{m},u(t)\rangle_{u(t)}^F
+\langle[v,u(t)]_\mathfrak{m},\bar{v}\rangle_{u(t)}^F
+\langle[\bar{v},u(t)],v\rangle_{u(t)}^F\nonumber\\
&=&2ct\langle u_0,u_0\rangle_{u(t)}^F+
2c\langle u_0,u\rangle_{u(t)}^F
+t(\langle [v,u_0],\bar{v}\rangle_{u(t)}^F
+\langle[\bar{v},u_0],v\rangle_{u(t)}^F)\nonumber\\
&=&2ct\langle u_0,u_0\rangle_{u(t)}^F+
2c\langle u_0,u\rangle_{u(t)}^F,
\end{eqnarray}
where $c$ is the constant determined by $[\bar{v},v]=2cu_0$, and in the last step, we have used the fact $\langle v,v\rangle_{u(t)}^F=\langle
\bar{v},\bar{v}\rangle_{u(t)}^F$ from the assumption (4) of the lemma. Notice that by the assumption (2), $\beta\notin\mathfrak{h}$, $c$ is nonzero.
So $c_2(t)$ is a nonzero function determined by the following equality,
\begin{eqnarray}\label{10007}
c_2(t)\langle v,v\rangle_{u(t)}^F &=&
c_2(t)\langle \bar{v},\bar{v}\rangle_{u(t)}^F=\langle N(u(t),v),\bar{v}\rangle_{u(t)}^F\nonumber\\
&=&c\langle u_0,u\rangle_{u(t)}^F+ct\langle u_0,u_0\rangle_{u(t)}^F.
\end{eqnarray}

Similar calculation also shows
\begin{equation}\label{10008}
N(u(t),\bar{v})=-c_2(t)v,
\end{equation}
where $c_2(t)$ is the same function indicated in (\ref{10007}).

\medskip
(4) The derivative $D_{\eta(u(t))}N(u(t),v)$.

Denote $u(s,t)=u(s,0)+tu_0$, where $u(s,0)$ is a smooth
family of bi-invariant unit vectors in $\mathfrak{g}_{\pm\alpha}$ with $u(0,0)=u$ and $\frac{\partial}{\partial s}u(0,0)=\bar{u}$. Above calculation for $N(u(t),v)$ can
also be applied to show
$N(u(s,t),v)=c_2(s,t)\bar{v}$
 where $c_2(s,t)$ is determined
by
\begin{equation}\label{10009}
c_2(s,t)\langle v,v\rangle_{u(s,t)}^F
=c\langle u_0,u(s,0)\rangle_{u(s,t)}^F+ct\langle u_0,u_0\rangle_{u(s,t)}^F.
\end{equation}
where $c$ is the nonzero constant defined by $[\bar{v},v]=2cu_0$.

By the assumption (4) of the lemma, we have $\langle v,v\rangle_{u(s,t)}^F=\langle v,v\rangle_{u(t)}^F$.
By the assumption (2) of the lemma, $\alpha$ is not contained in $\mathfrak{t}\cap\mathfrak{m}$,
We can find a group element $g\in T_H\subset H$, such that $\mathrm{Ad}(g)u(s,0)=u$. Then the $\mathrm{Ad}(H)$-invariance
of $F$ implies
\begin{eqnarray*}
\langle u_0,u(s,0)\rangle_{u(s,t)}^F
=\langle\mathrm{Ad}(g)u_0,\mathrm{Ad}(g)u(s,0)
\rangle_{\mathrm{Ad}(g)u(s,t)}^F
=\langle u_0,u\rangle_{u(t)}^F,
\end{eqnarray*}
and similarly $\langle u_0,u_0\rangle_{u(s,t)}^F=\langle u_0,u_0\rangle_{u(t)}^F$.

So compare (\ref{10007}) and (\ref{10009}), we see
$c_2(s,t)=c_2(t)$ and thus $N(u(s,t),v)$ are independent of $s$.
This fact, together with (\ref{10010}) implies
\begin{eqnarray}\label{10011}
D_{\eta(u(t))}N(u(t),v)=0.
\end{eqnarray}

\medskip
(5) The nominator $\langle R^F_{u(t)}v,v\rangle_{u(t)}^F$ of the flag curvature $K^F(o,u(t),u(t)\wedge v)$.

Summarizing the components of (\ref{6000}) we have calculated, i.e.
\begin{eqnarray*}
D_{\eta(u(t))}N(u(t),v)&=&0,\\
N(u(t),N(u(t),v))&=&-c_2(t)^2v,\\
{[}u(t),N(u(t),v){]}_\mathfrak{m}&=& N(u(t),{[}u(t),v{]}_\mathfrak{m})
=c'c_2(t)tv,
\end{eqnarray*}
where $c'$ is the nonzero constant determined by $[u_0,\bar{v}]=c'v$ and $[u_0,v]=-c'v$. Moreover, using
the assumption (1) of the lemma, it
is obvious that $[u(t),v]_\mathfrak{h}=0$.

Now input all these information into the homogeneous flag curvature
formula (\ref{6000}), then we get
\begin{equation}\label{10012}
\langle R_{u(t)}v,v\rangle_{u(t)}^F=
c_2(t)^2\langle v,v\rangle_{u(t)}^F.
\end{equation}

\medskip
Finally, it is easily seen that when $t$ goes to $\pm\infty$,
the inner product $g_{u(t)}^F$ converges to $g^F_{\pm u_0}$, so there are positive constants
$0<C_1<C_2$, such that for all $t\in(-\infty,\infty)$, we have
\begin{eqnarray*}
& &C_1<\langle v,v\rangle_{u(t)}^F<C_2,\\
& &C_1<\langle u_0,u_0\rangle_{u(t)}^F<C_2,\mbox{ and }\\
& &|\langle u_0,u\rangle_{u(t)}^F|<C_2.
\end{eqnarray*}
So by (\ref{10007}) and the continuity of $c_2(t)$,
there must exist $t_0\in\mathbb{R}$ with $c_2(t_)0=0$. Then
by (\ref{10012}), we have $K^F(o,u(t_0),u(t_0)\wedge v)=0$,
which ends the proof of the lemma.
\end{proof}

\medskip
\noindent {\bf Proof of Theorem \ref{mainthm-1}.}
Assume conversely that for the odd dimensional positively curved homogeneous Finsler space $(G/H,F)$, with a bi-invariant orthogonal decomposition
$\mathfrak{g}=\mathfrak{h}+\mathfrak{m}$, we can find roots $\alpha$ and
$\beta$ satisfying (1)-(3) in the theorem. Now we
show that their root planes satisfy all the assumptions in
Lemma \ref{key-tech-lemma-1}.

By the assumption (3) of the theorem, none of $\alpha\pm\beta$ is a root
of $\mathfrak{g}$, so
$[\mathfrak{g}_{\pm\alpha},\mathfrak{g}_{\pm\beta}]=0$, which
proves (1) in Lemma \ref{key-tech-lemma-1}.

If $\alpha$ is contained in $\mathfrak{t}\cap\mathfrak{h}$,
we can use the assumption (2) of the theorem and Lemma 3.9
in \cite{XD15} to claim $\alpha$ is a root of $\mathfrak{h}$.
This is a contradiction to the assumption (1) of the theorem.
If $\beta$ is contained in $\mathfrak{t}\cap\mathfrak{h}$,
we can use the assumption (3) of the theorem, and similarly
argue that $\beta$ is a root of $\mathfrak{h}$, which is
a contradiction. Notice that $\alpha$ and $\beta$ are bi-invariant
orthogonal to each other. So neither of them can be contained in
$\mathfrak{t}\cap\mathfrak{m}$, otherwise the other one is
contained in $\mathfrak{t}\cap\mathfrak{h}$ which brings
the contradiction. So (2) of Lemma \ref{key-tech-lemma-1} is
satisfied.

Let $\mathfrak{t}'$ be the bi-invariant orthogonal complement
of $\alpha'=\mathrm{pr}_\mathfrak{h}(\alpha)$ in
$\mathfrak{t}\cap\mathfrak{h}$. Then (1) of Lemma
\ref{orthogonality-lemma} indicates the decomposition
$\mathfrak{m}=\sum_{\gamma''\in\mathfrak{t}'}
\hat{\hat{\mathfrak{m}}}_{\pm\gamma''}$ is a $g_u^F$-orthogonal
decomposition for any nonzero $u\in\hat{\hat{\mathfrak{m}}}_0$.

By the assumption (2) of the theorem,
we have $$\hat{\hat{\mathfrak{m}}}_0=\mathfrak{m}'=
\mathfrak{t}\cap\mathfrak{m}+\mathfrak{g}_{\pm\alpha}.$$
Using the assumption (3) of the theorem, for $\beta''=\mathrm{pr}_{\mathfrak{t}'}(\beta)$, we have
$$\hat{\hat{\mathfrak{m}}}_{\pm\beta''}=\mathfrak{m}''=
\mathfrak{g}_{\pm\beta}.$$
Let $$\mathfrak{m}'''=\sum_{\gamma''\neq 0,\gamma''\neq\pm\beta''} \hat{\hat{\mathfrak{m}}}_{\pm\gamma''}$$
be the sum of all other summands.
Then the decomposition $\mathfrak{m}=\mathfrak{m}'+\mathfrak{m}''+\mathfrak{m}'''$
is $\mathrm{Ad}(T_H)$-invariant and $g^F_u$-orthogonal
for any nonzero vector $u\in\mathfrak{m}'$.
By the assumptions (2) and (3) of the theorem, it can be
easily checked
that
$$[\mathfrak{m}'+\mathfrak{m}'',
\mathfrak{m}''']_\mathfrak{m}\subset\mathfrak{m}'''.$$
So (3) of Lemma \ref{key-tech-lemma-1} is also valid.

To verify (4) of Lemma \ref{key-tech-lemma-1}, we consider any bi-invariant unit vectors $u,u'\in\mathfrak{g}_{\pm\alpha}$
and $v,v'\in\mathfrak{g}_{\pm\beta}$. Notice that
$\alpha'=\mathrm{pr}_\mathfrak{h}(\alpha)$ and $\beta'=\mathrm{pr}_\mathfrak{h}(\beta)$ are linearly independent in $\mathfrak{t}\cap\mathfrak{h}$ (which implies
$\mathrm{rk}\mathfrak{h}\geq 2$), so
we can find group elements $g_1, g_2\in T_H$, such that
\begin{eqnarray*}
\mathrm{Ad}(g_1)u'=u, & & \mathrm{Ad}(g_1)v=v,\\
\mathrm{Ad}(g_2)u'=u',&\mbox{ and }&
\mathrm{Ad}(g_2)v'=v.
\end{eqnarray*}
Then we have
$$\langle v',v'\rangle^F_{u'+tu_0}
=\langle \mathrm{Ad}(g_1g_2)v',\mathrm{Ad}(g_1g_2)v'
\rangle_{\mathrm{Ad}(g_1g_2)(u'+tu_0)}^F=\langle v,v\rangle_{u+tu_0}^F.
$$
So the values of $\langle v,v\rangle_{u+tu_0}$ do not
depend on the choices for the bi-invariant unit vectors $u$ and $v$, which proves (4) of Lemma \ref{key-tech-lemma-1}.

To summarize, as all requirements are satisfied, Lemma \ref{key-tech-lemma-1} indicates the homogeneous Finsler space $(G/H,F)$ is not positively curved, which is a contradiction.

\section{Proof of Theorem \ref{cor-main-thm-1}}
We conclude this paper by applying Theorem \ref{mainthm-1}
to the coset space $M$ in Theorem \ref{main-thm-1}. In
Subsection \ref{subsection-1}, we prove (1) of Theorem \ref{cor-main-thm-1}, corresponding to the case $k=1$, by
a case by case discussion about the coset spaces in (\ref{mainlist-2}), where we use the notations and convention in \cite{XW} for root systems of compact simple Lie algebras.
In Subsection \ref{subsection-2}, we prove (2) and (3)
of Theorem \ref{cor-main-thm-1}, where we use the notations
for the bi-invariant orthogonal decompositions in the proof
of Theorem \ref{main-thm-1}.

\subsection{Proof for (1) of Theorem \ref{cor-main-thm-1}}
\label{subsection-1}

\noindent {\bf Case 1}\quad $G/H=\mathrm{SU}(p+q)/\mathrm{SU}(p)\mathrm{SU}(q)$ with
$p>q=2$ or $p=q>3$. 

The root system of $\mathfrak{g}=\mathfrak{su}(p+q)$
can be isometrically identified with
$$\{e_i-e_j,\forall 1\leq i<j\leq p+q\},$$
and the fundamental Cartan subalgebra $\mathfrak{t}$ can be chosen so that
$$\mathfrak{t}\cap\mathfrak{m}=\mathbb{R}[q(e_1+\cdots+e_p)
-p(e_{p+1}+\cdots+e_{p+q})].$$
Now consider $\alpha=e_1-e_{p+1}$ and $\beta=e_2-e_{p+2}$. Then they satisfies
(1)-(4) in Theorem \ref{mainthm-1}. So $\mathrm{SU}(p+q)/\mathrm{SU}(p)\mathrm{SU}(q)$ does not admit any positively curved homogeneous Finsler metric.

\medskip
\noindent {\bf Case 2}\quad $G/H=\mathrm{Sp}(n)/\mathrm{SU}(n)$ with $n>4$. The root system of
$\mathfrak{g}=\mathfrak{sp}(n)$ can be isometrically identified with
$$\{\pm e_i,\forall 1\leq i\leq n;\pm e_i\pm e_j,
\forall 1\leq i<j\leq n\},$$
and the fundamental Cartan subalgebra $\mathfrak{t}$ can be chosen so that
$$\mathfrak{t}\cap\mathfrak{m}=\mathbb{R}(e_1+\cdots+e_n).$$
Now consider $\alpha=2e_1$ and $\beta=2e_2$. Then it is easily seen that they satisfy the conditions (1)-(4) in
Theorem \ref{mainthm-1}. Hence
$\mathrm{Sp}(n)/\mathrm{SU}(n)$ do not admit any positively curved
homogeneous Finsler metric.

\medskip
\noindent {\bf Case 3}\quad $G/H=\mathrm{SO}(2n)/\mathrm{SU}(n)$ with $n=5$ or $n>6$.
The root system of
$\mathfrak{g}=\mathfrak{so}(2n)$ can be isometrically identified with
$$\{\pm e_i\pm e_j,
\forall 1\leq i<j\leq n\},$$
and the fundamental Cartan subalgebra $\mathfrak{t}$ can be chosen so that
$$\mathfrak{t}\cap\mathfrak{m}=\mathbb{R}(e_1+\cdots+e_n).$$
Now consider $\alpha=e_1+e_2$ and $\beta=e_3+e_4$. It is easy to check that   they satisfy the conditions (1)-(4) in
Theorem \ref{mainthm-1}. Thus
$\mathrm{SO}(2n)/\mathrm{SU}(n)$ do not admit any positively curved
homogeneous Finsler metric.

\medskip
\noindent {\bf Case 4}\quad $G/H=\mathrm{E}_6/\mathrm{SO}(10)$. The root system of
$\mathfrak{g}=\mathfrak{e}_6$ can be isometrically identified with
$$\{\pm e_i\pm e_j, \forall1\leqq i<j\leqq 5;\pm\tfrac12 e_1\pm\cdots\pm\tfrac12 e_5\pm\tfrac{\sqrt{3}}{2}e_6
	\mbox{ with an odd number of +'s}\},$$
and the fundamental Cartan subalgebra $\mathfrak{t}$ can be chosen so that
$\mathfrak{t}\cap\mathfrak{m}=\mathbb{R}e_6$.
Now set 
\begin{eqnarray*}
\alpha&=&-\frac12e_1+\frac12e_2+\frac12e_3+\frac12e_4+\frac12e_5
+\frac{\sqrt{3}}{2}e_6,\mbox{ and}\\
 \beta&=&-\frac12e_1-\frac12e_2-\frac12e_3-\frac12e_4-\frac12e_5
-\frac{\sqrt{3}}{2}e_6.
\end{eqnarray*}
Then $\alpha, \beta $  satisfy the conditions  (1)-(4) in
Theorem \ref{mainthm-1}. Thus
$\mathrm{E}_6/\mathrm{SO}(10)$ do not admit any positively curved
homogeneous Finsler metric.

\medskip
\noindent {\bf Case 5}\quad $G/H=\mathrm{E}_7/\mathrm{E}_6$. The root system of
$\mathfrak{g}=\mathfrak{e}_7$ can be isometrically identified with
\begin{eqnarray*}
& &\{\pm e_i\pm e_j,\forall 1\leqq i<j<7;\pm\sqrt{2}e_7;
\tfrac12(\pm e_1\pm\cdots\pm e_6\pm\sqrt{2}e_7)\nonumber\\
& &\mbox{  with an odd number of +'s among the first six coefficients}\},
\end{eqnarray*}
and the fundamental Cartan subalgebra $\mathfrak{t}$ can be chosen so that
$\mathfrak{t}\cap\mathfrak{m}=\mathbb{R}(\sqrt{2}e_6+e_7)$. Now set $\alpha=e_5+e_6$ and $\beta=e_5-e_6$.
Then $\alpha,\beta$  satisfy the conditions  (1)-(4) in
Theorem \ref{mainthm-1}. Thus
$\mathrm{E}_7/\mathrm{E}_6$ do not admit any positively curved
homogeneous Finsler metric.

To summarize, the proof for (1) of Theorem \ref{cor-main-thm-1} is completed.

\subsection{Proof for (2) and (3) of Theorem \ref{cor-main-thm-1}}
\label{subsection-2}

We first recall the notations for the bi-invariant orthogonal
decompositions for each $N_i$ and $M$.

We fix the bi-invariant inner product for each $\mathfrak{g}_i$, and
assume for each $i$, $\mathfrak{g}_i=\mathfrak{k}_i+\mathfrak{p}_i$ with
$\mathfrak{k}_i=\mathfrak{h}_i+\mathbb{R}v_i$
is the bi-invariant orthogonal decomposition for the irreducible factor $N_i=G_i/K_i=G_i/(S^1\cdot H_i)$.
Fix a Cartan subalgebra $\mathfrak{t}$ of $\mathfrak{g}=\bigoplus_{i=1}^k\mathfrak{g}_i$
from $\mathfrak{k}=\bigoplus_{i=1}^k\mathfrak{k}_i$. The bi-invariant orthogonal
decomposition for $M=G/H'=G/(T^{k-1}\cdot H)$ is then
$\mathfrak{g}=\mathfrak{h}'+\mathfrak{m}$, where
$\mathfrak{m}=\mathfrak{m}_0+\sum_{i=1}^k\mathfrak{p}_i$,
$\mathfrak{m}_0=\mathfrak{t}\cap\mathfrak{m}=\mathbb{R}v$,
$v=\sum_{i=1}^k c_i v_i$ with $c_i\neq 0$ for each $i$, and
$\mathfrak{h}'=\bigoplus_{i=1}^k\mathfrak{h}_i
+\mathrm{Lie}(T^{k-1})$ is the bi-invariant orthogonal complement of $\mathfrak{m}_0$ in $\mathfrak{k}$. Notice
that the choice of $M$ is totally determined by the choices
for $N_i$'s and the choice for $\mathfrak{m}_0$.

\medskip
The proof for (3) of Theorem \ref{cor-main-thm-1}, i.e. the case when $k>3$, is simple.
We can find two root planes
$\mathfrak{g}_{\pm\alpha}$ and $\mathfrak{g}_{\pm\beta}$
contained in $\mathfrak{p}_1$ and $\mathfrak{p}_2$ respectively. Then direct checking shows $\alpha$ and $\beta$
are the root pair indicated in Theorem \ref{mainthm-1}. So
in this case, $M$ does not admit positively curved homogeneous Finsler metrics.

\medskip
Finally we prove (2) of Theorem \ref{cor-main-thm-1}.

When $k=3$, there are two cases.

\medskip
\noindent {\bf Case 1}\quad
$\mathfrak{g}_1\neq A_1$.

We can find a root
plane $\mathfrak{g}_{\pm\beta}$ from $\mathfrak{p}_1$ such that
$\beta\notin\mathbb{R}v_1$. Let $\mathfrak{g}_{\pm\alpha}$ be a
root plane in $\mathfrak{p}_2$. Then $\alpha$ and $\beta$ are the root pair indicated in Theorem \ref{mainthm-1}.
So
in this case, $M$ does not admit positively curved homogeneous Finsler metrics.

\medskip
\noindent {\bf Case 2}\quad $\mathfrak{g}_1=\mathfrak{g}_2=\mathfrak{g}_3=A_1$.

We can choose the bi-invariant inner product on each factor,
such that they have the same scale. We can also choose
$v_i$ to be a root for $\mathfrak{g}_i$ respectively. When $v=c_1v_1+c_2v_2+c_3v_3$
satisfies that the absolute values of $c_1$, $c_2$ and $c_3$
are not all the same, then the corresponding $M$ does not
admit positively curved homogeneous Finsler metrics. It can
be argued as following. Assume $|c_2|\neq|c_3|$ for example. We
can take the root $\alpha=v_1$ and $\beta=v_2$. Obviously their root planes are contained in the $\mathfrak{p}_i$-factors. They are the root pair
indicated in Theorem \ref{mainthm-1}, which provides the obstacle to the
positively curved homogeneous Finsler metrics on $M$.

When $k=2$, there are three cases.

\medskip
\noindent {\bf Case 3}\quad $\mathfrak{g}_1\neq A_1$ and $\mathfrak{g}_2\neq A_1$.

We can find root planes $\mathfrak{g}_{\pm\alpha}$ and $\mathfrak{g}_{\pm\beta}$ from $\mathfrak{p}_1$ and $\mathfrak{p}_2$ respectively, such that $\alpha\notin\mathbb{R}v_1$ and $\beta\notin\mathbb{R}v_2$.
Then they are the root pair indicated in Theorem \ref{mainthm-1}. So
in this case, $M$ does not admit positively curved homogeneous Finsler metrics.

\medskip
\noindent {\bf Case 4}\quad $\mathfrak{g}_1\neq A_1$ and $\mathfrak{g}_2= A_1$. Let $\mathfrak{g}_{\pm\alpha}$ be
a root plane in $\mathfrak{p}_1$ such that $\alpha\notin\mathbb{R}v_1$. Let $\beta$ be a root for $\mathfrak{g}_2$. The vector $v$ in $\mathfrak{m}_0$ can be
chosen as $v=cv_1+\beta$. Because $v_1$ and $\alpha$ are linearly independent, and a root system is a finite set, only for finite values of $c$, $cv_1+\mathbb{R}\alpha$ has non-empty
intersection with the root system of $\mathfrak{g}_1$. For any
other value of $c$, $\alpha$ and $\beta$ are the root pair
indicated in Theorem \ref{mainthm-1}, so the corresponding $M$
does not admit positively curved homogeneous Finsler metrics.

\medskip
\noindent {\bf Case 5}\quad
$\mathfrak{g}_1=\mathfrak{g}_2=A_1$.
We can choose the bi-invariant inner product on each factor,
such that they have the same scale. We can also choose
$v_i$ to be a root for $\mathfrak{g}_i$ respectively. The following lemma indicates
only finitely many $M$ may admit positively curved homogeneous
Finsler metrics.

\begin{lemma}\label{last-lemma}
When $c_1\neq\pm c_2$, $c_1\neq \pm\frac12 c_2$,
and $c_1\neq \pm 2c_2$, then the corresponding
$M=G/H'=\mathrm{SU}(2)\times\mathrm{SU}(2)/S^1$ does not
admit positively curved homogeneous Finsler metrics.
\end{lemma}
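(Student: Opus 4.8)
The plan is to deduce the lemma from Lemma \ref{key-tech-lemma-1}, applied to the presentation $M=G/H'$ with $G=\mathrm{SU}(2)\times\mathrm{SU}(2)$ and $H'=T^1$. With the notation fixed above, $\mathfrak g=\mathfrak g_1\oplus\mathfrak g_2$, $\mathfrak g_i\cong A_1$ of equal scale, $v_i$ a root of $\mathfrak g_i$, $\mathfrak p_i=\mathfrak g_{\pm v_i}$ the two-dimensional root plane, $\mathfrak t=\mathbb Rv_1\oplus\mathbb Rv_2$, $\mathfrak t\cap\mathfrak m=\mathbb Rv$ with $v=c_1v_1+c_2v_2$, and $\mathfrak h'=\mathbb R(c_2v_1-c_1v_2)$ (so $\mathfrak t$ is a fundamental Cartan subalgebra and $T_H=T^1=\exp\mathfrak h'$). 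Then $\mathfrak m=\mathbb Rv\oplus\mathfrak p_1\oplus\mathfrak p_2$ is five-dimensional, $\dim M=5$ is odd, and $\mathrm{rk}\mathfrak h'=\mathrm{rk}\mathfrak g-1$, as Lemma \ref{key-tech-lemma-1} requires. We take $\alpha=v_1$, $\beta=v_2$, $u_0=v/\langle v,v\rangle_{\mathrm{bi}}^{1/2}$, and the $\mathrm{Ad}(T^1)$-invariant decomposition $\mathfrak m=\mathfrak m'+\mathfrak m''+\mathfrak m'''$ with $\mathfrak m'=\mathbb Rv+\mathfrak g_{\pm\alpha}$, $\mathfrak m''=\mathfrak g_{\pm\beta}$, and $\mathfrak m'''=0$ (forced by dimension).

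The key point is the weight structure of $\mathrm{Ad}(T^1)$. Since $H'=T^1$ is a closed, hence primitive, subtorus of a maximal torus of $G$, the ratio $c_1:c_2$ is rational; write it in lowest terms as $k_1:k_2$ with $\gcd(k_1,k_2)=1$. Then $\mathrm{Ad}(T^1)$ fixes $\mathbb Rv$ and acts on $\mathfrak p_1$ and $\mathfrak p_2$ by rotations whose weights are proportional to $k_2$ and $k_1$ respectively. Hence, for any $u=av+u_1\in\mathfrak m'$ with $0\neq u_1\in\mathfrak p_1$, the stabilizer $\mathrm{Stab}_{T^1}(u)=\mathrm{Stab}_{T^1}(u_1)$ fixes $\mathbb Rv\oplus\mathfrak p_1$ pointwise and acts on $\mathfrak p_2$ through the cyclic rotation group of order $|k_2|$. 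The hypotheses $c_1\neq\pm c_2$, $c_1\neq\pm\tfrac12c_2$, $c_1\neq\pm2c_2$ say exactly that $\{|k_1|,|k_2|\}\not\subseteq\{1,2\}$, i.e. $\max(|k_1|,|k_2|)\geq3$; since interchanging the two $\mathrm{SU}(2)$-factors swaps $c_1\leftrightarrow c_2$ and preserves the hypotheses, we may assume $|k_2|\geq3$.

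Granting $|k_2|\geq3$, we verify conditions (1)--(4) of Lemma \ref{key-tech-lemma-1}. Condition (1), $[\mathfrak g_{\pm\alpha},\mathfrak g_{\pm\beta}]=0$, is clear since $\mathfrak p_1\subset\mathfrak g_1$ and $\mathfrak p_2\subset\mathfrak g_2$ commute; condition (2) holds because $c_1,c_2\neq0$ keeps $v_1$ and $v_2$ out of both $\mathbb Rv$ and $\mathbb R(c_2v_1-c_1v_2)$. For (3) and (4) we use that for a homogeneous metric $F$ each $g_u^F$ is $\mathrm{Ad}(\mathrm{Stab}_{T^1}(u))$-invariant. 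When $u\in\mathfrak m'\setminus\mathbb Rv$, the group $\mathrm{Stab}_{T^1}(u)$ acts on $\mathfrak p_2$ as a rotation group of order $|k_2|\geq3$, for which $\mathfrak p_2$ is an irreducible nontrivial real representation while $\mathbb Rv\oplus\mathfrak p_1$ is fixed; therefore $\mathfrak m'=\mathbb Rv\oplus\mathfrak p_1$ is $g_u^F$-orthogonal to $\mathfrak m''=\mathfrak p_2$, and moreover $g_u^F|_{\mathfrak p_2}$ is a round inner product. This gives the orthogonality in condition (3); its remaining structural clauses ($\mathrm{Ad}(T^1)$-invariance of the decomposition, $\mathfrak m'=\mathfrak t\cap\mathfrak m+\mathfrak g_{\pm\alpha}$, $\mathfrak m''=\mathfrak g_{\pm\beta}$, and $[\mathfrak m'+\mathfrak m'',\mathfrak m''']_\mathfrak m\subset\mathfrak m'''$) are immediate, the last since $\mathfrak m'''=0$. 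Roundness also gives that $\langle v,v\rangle^F_{u+tu_0}$ does not depend on the bi-invariant unit vector $v\in\mathfrak p_2$; independence from the choice of unit $u\in\mathfrak p_1$ follows because $\mathrm{Ad}(T^1)$ fixes $u_0$ and acts transitively on the unit circle of $\mathfrak p_1$. Hence condition (4) holds too, and Lemma \ref{key-tech-lemma-1} yields that no homogeneous Finsler metric on $M$ is positively curved.

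The main obstacle, and the reason exactly these three ratios are excluded, is the verification of (3) and (4): they force the relevant stabilizer in $T^1$ to act on the plane $\mathfrak p_2$ as a cyclic group of order at least $3$, whereas a rotation group of order $1$ or $2$ imposes no restriction on an inner product on a plane. Precisely when $\{|k_1|,|k_2|\}\subseteq\{1,2\}$, i.e. $c_1=\pm c_2$, $c_1=\pm\tfrac12c_2$, or $c_1=\pm2c_2$, neither choice $(\alpha,\beta)=(v_1,v_2)$ nor $(\alpha,\beta)=(v_2,v_1)$ produces a large enough stabilizer, so the argument genuinely degenerates there, consistently with the statement of the lemma.
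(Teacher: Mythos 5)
Your proof is correct and follows essentially the same route as the paper: both reduce to Lemma \ref{key-tech-lemma-1} with $\alpha=v_1$, $\beta=v_2$, $\mathfrak{m}'=\mathbb{R}v+\mathfrak{p}_1$, $\mathfrak{m}''=\mathfrak{p}_2$, $\mathfrak{m}'''=0$, and both exploit that the subgroup of $T^1$ fixing $\mathfrak{p}_1$ pointwise acts on $\mathfrak{p}_2$ by rotations of order at least $3$, which forces the $g^F_u$-orthogonality and roundness needed in (3)--(4). Where the paper exhibits a single element $g$ rotating $\mathfrak{p}_2$ by an angle $\theta\notin\mathbb{Z}\pi$ and runs a Ces\`aro-averaging computation, you phrase the same fact as invariant theory for the cyclic stabilizer of order $|k_2|\geq 3$, and you make explicit the translation of the excluded ratios into $\max(|k_1|,|k_2|)\geq 3$, which the paper leaves implicit behind its ``w.l.o.g.\ $|c_1|<|c_2|$''.
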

\begin{proof}
We have noticed that Theorem \ref{mainthm-1} can
be effectively used only when
$\dim\mathfrak{t}\cap\mathfrak{h}'>1$, which is not
satisfied in the case discussed in this lemma. But
we can still use Lemma \ref{key-tech-lemma-1}.

Without loss of generalities, we assume $|c_1|<|c_2|$. We fix a fundamental Cartan subalgebra $\mathfrak{t}$, choose a bi-invariant unit vector $u_0\in\mathfrak{t}\cap\mathfrak{m}$, and denote $\mathfrak{g}_{\pm\alpha}$ and $\mathfrak{g}_{\pm\beta}$
the root planes contained in the first and second $A_1$-factor respectively. Then we can find a group element
$g\in H'$, such that $\mathrm{Ad}(g)$ acts on $\mathfrak{m}'=\mathfrak{t}\cap\mathfrak{m}+
\mathfrak{g}_{\pm\alpha}$ as the identity map, but
rotates $\mathfrak{m}''=\mathfrak{g}_{\pm\beta}$ with
an angle $\theta\notin\mathbb{Z}\pi$. So for any bi-invariant unit vectors
$u,u'\in\mathfrak{g}_{\pm\alpha}$ and $v\in\mathfrak{m}''$, we have
\begin{eqnarray*}
\langle u',v\rangle_{u+tu_0}^F&=&\langle \mathrm{Ad}(g)u',\mathrm{Ad}(g)v\rangle_{\mathrm{Ad}(g)(u+tu_0)}^F
=\langle u',\mathrm{Ad}(g)v\rangle_{u+tu_0}^F\\
&=&\cdots=\langle u',\mathrm{Ad}(g)^k v\rangle_{u+tu_0}^F=\cdots,
\end{eqnarray*}
i.e.
$$\langle u',v\rangle_{u+tu_0}^F=\lim_{k\rightarrow\infty}
\langle u',\frac1k\sum_{i=1}^k\mathrm{Ad}(g)^i v\rangle_{u+tu_0}^F=0,$$
which proves $\langle\mathfrak{m}',\mathfrak{m}''\rangle_{u+tu_0}^F=0$.

Similar argument also provides
\begin{eqnarray*}
\langle v,v\rangle_{u+tu_0}^F=
\langle\mathrm{Ad}(g)v,\mathrm{Ad}(g)v\rangle_{u+tu_0}^F
=\cdots=\langle\mathrm{Ad}(g)^k v,\mathrm{Ad}(g)^k v\rangle_{u+tu_0}^F=\cdots.
\end{eqnarray*}
Because the angle $\mathrm{Ad}(g)$ rotates $\mathfrak{m}''=\mathfrak{g}_{\pm\beta}$ with is not
integer multiples of $\pi$, $\mathrm{Ad}(g)^k v$ are bi-invariant unit vectors in $\mathfrak{g}_{\pm\beta}$ which
 are not contained in a line. So
$\langle w,w\rangle_{u+tu_0}^F=c(t,u)
\langle w,w\rangle_{\mathrm{bi}}$ for all $w\in\mathfrak{m}''$.
Using $\mathrm{Ad}(H')$-actions again, we can change $u$ to any other bi-invariant unit vector, so $c(t,u)$ only depends on $t$.

We have verified (4) and part of (3) in Lemma \ref{key-tech-lemma-1}. As $\mathfrak{m}'''=0$, all other assumptions
in (1)-(3) of Lemma \ref{key-tech-lemma-1} can be easily checked. So $M$ does not admit a positively curved homogeneous
Finsler metric in this case.
\end{proof}

Summarize Case 1-5 in this subsection, we see in each case, there may only exist finite choices of $M$ which admit positively curved homogeneous Finsler metrics. This proves
(2) of Theorem \ref{cor-main-thm-1}.

\end{document}